  \theoremstyle{plain}
    \newtheorem{thm}{Theorem}[section]
    \newtheorem{prop}[thm]{Proposition}
    \newtheorem{subsec}[thm]{}
\theoremstyle{definition}
    \newtheorem{defn}[thm]{Definition}
        \newtheorem{remark}[thm]{Remark}
    \newtheorem{exam}[thm]{Example}
\theoremstyle{remark}
\title{}
\author{}
\date{}
\begin{document}

\title[]{Non-abelian cohomology of Nijenhuis Lie algebras and the inducibility of automorphisms and derivations}

\author{Apurba Das}
\address{Department of Mathematics,
Indian Institute of Technology, Kharagpur 721302, West Bengal, India}
\email{apurbadas348@gmail.com, apurbadas348@maths.iitkgp.ac.in}

%\author{Suman Majhi}
%\address{Department of Mathematics,
%Indian Institute of Technology, Kharagpur 721302, West Bengal, India}
%\email{majhisuman693@gmail.com}

%\author{Ramkrishna Mandal}
%\address{Department of Mathematics, Indian Institute of Technology, Kharagpur 721302, West Bengal, India}
%\email{ramkrishnamandal430@gmail.com}

\begin{abstract}
In this paper, we first introduce the non-abelian cohomology group of a Nijenhuis Lie algebra with values in another Nijenhuis Lie algebra and show that it parametrizes the isomorphism classes of all non-abelian extensions. In particular, we obtain a classification result for abelian extensions of a Nijenhuis Lie algebra by a given Nijenhuis representation. Next, given a non-abelian extension of Nijenhuis Lie algebras, we investigate the inducibility of a pair of Nijenhuis Lie algebra automorphisms and show that the corresponding obstruction lies in the non-abelian cohomology group. Subsequently, we also consider the inducibility of a pair of Nijenhuis Lie algebra derivations in a given abelian extension.
\end{abstract}

\maketitle

%\curraddr{}
%\email{}

%\subjclass[2010]{}
%\keywords{}

\medskip

\begin{center}
\noindent {2020 MSC classification:} 17B40, 17B55, 17B56.

\noindent  {Keywords:} Nijenhuis Lie algebras, Non-abelian cohomology, Automorphisms, Derivations, Wells exact sequence.
\end{center}

 %Quasi-twilled associative algebras, Deformation maps, Crossed homomorphisms, Rota-Baxter operators, Controlling algebras, Cohomology.
 
 %Rota-Baxter operators, twisted Rota-Baxter operators, Crossed homomorphism, Averaging operators, Reynolds operators, Cohomology, Deformation.

 %\medskip

%\noindent {\sf Date of resubmission:} July 26, 2021.

\thispagestyle{empty}

\tableofcontents

%\vspace{0.2cm}

\medskip

\section{Introduction}
Algebras are often equipped with distinguished operators. Among others, algebra homomorphisms, derivations and involutions are the most popular. In the last few years, Rota-Baxter operators and some variations have attracted much interest because of their importance in mathematics and mathematical physics. A Lie algebra endowed with a distinguished Rota-Baxter operator is referred to as a Rota-Baxter Lie algebra in the literature (see, for example, \cite{jiang-sheng}, \cite{das-hazra-mishra}, \cite{peng-zhang} and the references therein). Another interesting operator studied in various areas of mathematics (especially in algebra and geometry) and mathematical physics is the `Nijenhuis operator'. It is important to note that Nijenhuis operators significantly appear in the linear deformation theory of Lie algebras, complex Lie algebras, geometry of vector-valued differential forms and complex manifolds, bi-Hamiltonian systems, nonlinear evolution equations, integrable systems and tensor hierarchies in mathematical physics \cite{dorfman}, \cite{frol}, \cite{gra-bi}, \cite{koss}. A Lie algebra together with a distinguished Nijenhuis operator is called a {\em Nijenhuis Lie algebra}. It is well-known that a Nijenhuis operator on a Lie algebra gives rise to a new Lie algebra structure on the underlying space, called the deformed Lie algebra \cite{koss}. It has been shown in \cite{das-twisted} that Nijenhuis operators on Lie algebras are closely related to twisted Rota-Baxter operators and NS-Lie algebras (a structure that generalizes pre-Lie algebras). However, unlike Rota-Baxter operators, Nijenhuis operators cannot be characterized by their graphs as subalgebras of some bigger algebra. See also the references \cite{azimi}, \cite{lei}, \cite{leroux}, \cite{liu-sheng}, \cite{ma}, \cite{saha}, \cite{wang}, \cite{yuan} for some study of Nijenhuis operators on other types of algebras.

\medskip

Algebraic structures and their properties are often understood from some invariants associated with them. Each invariants have their significance. The notion of non-abelian cohomology of an abstract group with values in another group was introduced by Eilenberg and Maclane \cite{eilen} to understand non-abelian extensions of abstract groups (see also \cite{hochschild}). Subsequently, the non-abelian cohomology theory and non-abelian extensions were generalized to the context of associative algebras, Lie algebras, Leibniz algebras and various other algebraic structures (see, for example, \cite{ded}, \cite{lue}, \cite{gouray}, \cite{hoch}, \cite{inas}, \cite{fre}, \cite{casas}, \cite{liu-sheng-wang}, \cite{das-rathee}). Recently, the non-abelian cohomology of a Rota-Baxter Lie algebra with values in another Rota-Baxter Lie algebra is defined in \cite{das-hazra-mishra}. Our primary aim in this paper is to describe the {\em non-abelian cohomology group} of a Nijenhuis Lie algebra with values in another Nijenhuis Lie algebra. We show that the non-abelian cohomology group parametrizes the set of all isomorphism classes non-abelian extensions of Nijenhuis Lie algebras (cf. Theorem \ref{main-thm-sec3}). We also find a morphism from the non-abelian cohomology group of Nijenhuis Lie algebra to the non-abelian cohomology group of the deformed Lie algebra (cf. Remark \ref{remark-nl-def}). As a particular case, we define the {\em second cohomology group} of a Nijenhuis Lie algebra with coefficients in a Nijenhuis representation and obtain its correspondence with abelian extensions (cf. Theorem \ref{thm-abelian}). In subsequent work, we aim to develop the full cochain complex and the cohomology of a Nijenhuis Lie algebra with coefficients in a Nijenhuis representation. We also look forward to applications in deformation theory and homotopy algebras.

\medskip

A well-known question involving extensions of algebraic structures is the inducibility problem of a pair of automorphisms. This problem was first proposed by Wells \cite{wells} in the context of abstract groups. In the same article, he also obtained a short exact sequence connecting various automorphism groups (popularly known as the {\em Wells exact sequence}). His study was further explored in various specific cases and generalized to other algebraic structures \cite{bar-singh}, \cite{das-rathee}, \cite{hazra-habib}, \cite{hou-zhao}, \cite{jin}. In the context of Lie algebras, the inducibility problem can be stated as follows. Let $0 \rightarrow \mathfrak{h} \xrightarrow{i} \mathfrak{e} \xrightarrow{p} \mathfrak{g} \rightarrow 0$ be a given non-abelian extension of Lie algebras. Consider the group $\mathrm{Aut}_\mathfrak{h} (\mathfrak{e})$ of all Lie algebra automorphisms $\gamma \in \mathrm{Aut} (\mathfrak{e})$ for which $\mathfrak{h}$ is an invariant subspace. Then there is a group homomorphism $\tau : \mathrm{Aut}_\mathfrak{h} (\mathfrak{e}) \rightarrow \mathrm{Aut} (\mathfrak{h}) \times \mathrm{Aut} (\mathfrak{g})$ given by $\tau (\gamma) = (\gamma |_\mathfrak{h}, p \gamma s)$, where $s$ is any section of the map $p$. A pair $(\beta, \alpha) \in \mathrm{Aut} (\mathfrak{h}) \times \mathrm{Aut} (\mathfrak{g})$ is said to be inducible if it lies in the image of the map $\tau$. The inducibility problem then asks to find a necessary and sufficient condition under which a pair of Lie algebra automorphisms $(\beta, \alpha) \in \mathrm{Aut} (\mathfrak{h}) \times \mathrm{Aut} (\mathfrak{g})$ is inducible. To answer this problem, one formulates a suitable Wells map and obtains a necessary and sufficient condition for the inducibility problem in terms of the Wells map. In the Lie algebra case also, the Wells map fits into a short exact sequence connecting various automorphism groups. In \cite{das-hazra-mishra}, together with Hazra and Mishra, the present author considered the extensions of Rota-Baxter Lie algebras and studied the inducibility problem of a pair of Rota-Baxter Lie algebra automorphisms. Among others, they defined the Wells map in the context of Rota-Baxter Lie algebras and showed that a pair of Rota-Baxter Lie algebra automorphisms is inducible if and only if its image under the Wells map vanishes identically. Therefore, it is natural to ask whether the inducibility problem can be answered in the context of Nijenhuis Lie algebras. We aim to answer this question by introducing a suitable Wells map in the context of Nijenhuis Lie algebras (cf. Theorem \ref{thm-ind-aut}). We also construct the corresponding version of the Wells exact sequence (cf. Theorem \ref{thm-wells-ses-aut}). Finally, we provide our particular attention to the inducibility problem in a given abelian extension.

\medskip

Another similar question that arises in the study of extensions is the inducibility problem of a pair of derivations \cite{barati}, \cite{tan-xu}, \cite{hou-zhao}. Note that this problem makes sense only in a given abelian extension. In the context of Lie algebras, this can be stated as follows. Let $0 \rightarrow V \xrightarrow{i} \mathfrak{e}\xrightarrow{p} \mathfrak{g} \rightarrow 0$ be a given abelian extension of the Lie algebra $\mathfrak{g}$ by a representation $V$. Consider the space $\mathrm{Der}_V (\mathfrak{e})$ of all Lie algebra derivations $D \in \mathrm{Der} (\mathfrak{e})$ for which the subspace $V$ is invariant. Then there is a map $\eta : \mathrm{Der}_V (\mathfrak{e}) \rightarrow \mathrm{Der} (V) \times \mathrm{Der} (\mathfrak{g})$, $\eta (D) = (D \big|_V , pDs)$, where $s$ is any section of the map $p$. Here the inducibility problem also asks to find a necessary and sufficient condition for which a pair of Lie algebra derivations $(D_V, D_\mathfrak{g}) \in \mathrm{Der} (V) \times \mathrm{Der} (\mathfrak{g})$ lies in the image of $\eta$. As before, one usually formulates a suitable Wells map and obtains the corresponding obstruction as the image of the Wells map. The Wells map here also fits into a short exact sequence that connects various derivation spaces. Our next aim in this paper is to generalize this inducibility problem in the context of Nijenhuis Lie algebras. For this, we first consider the notion of Nijenhuis Lie algebra derivations. Then given an abelian extension of a Nijenhuis Lie algebra by a fixed Nijenhuis representation, we define the space of {\em compatible pairs} of Nijenhuis Lie algebra derivations which turns out to be a Lie algebra. We also consider the Wells map in this context and show that a pair of Nijenhuis Lie algebra derivations is inducible if and only if the pair is compatible and its image under the Wells map vanishes (cf. Theorem \ref{thm-ind-der}). Finally, we end this part by showing that the Wells map here also fits a short exact sequence (cf. Theorem \ref{thm-wells-ses-der}).

\medskip

The paper is organized as follows. In Section \ref{sec2}, we recall some basics of Nijenhuis Lie algebras including their representations. The non-abelian cohomology group of a Nijenhuis Lie algebra with values in another Nijenhuis Lie algebra is defined in Section \ref{sec3}. We also show that the set of all isomorphism classes of non-abelian extensions of Nijenhuis Lie algebras can be parametrized by the non-abelian cohomology group. In Section \ref{sec4}, we consider the inducibility problem of a pair of Nijenhuis Lie algebra automorphisms in a given non-abelian extension. We also define the Wells map and the Wells exact sequence connecting various automorphism groups. Finally, in Section \ref{sec5}, we discuss the inducibility of a pair of Nijenhuis Lie algebra derivations in a given abelian extension and derive the corresponding Wells map and the Wells exact sequence.

\medskip

\section{Nijenhuis Lie algebras}\label{sec2}
In this section, we highlight some background on Nijenhuis operators and Nijenhuis Lie algebras. Among others, we consider Nijenhuis representations of Nijenhuis Lie algebras.

\begin{defn}
    Let $(\mathfrak{g}, [~, ~]_\mathfrak{g})$ be a Lie algebra. A {\bf Nijenhuis operator} on this Lie algebra is a linear map $N : \mathfrak{g} \rightarrow \mathfrak{g}$ that satisfies
    \begin{align*}
        [N (x), N (y)]_\mathfrak{g} = N ([ N (x), y]_\mathfrak{g} + [x, N(y)]_\mathfrak{g} - N [x, y]_\mathfrak{g} ), \text{ for } x, y \in \mathfrak{g}. 
    \end{align*}
\end{defn}

%Nijenhuis operators play a fundamental role in deformation theory, complex Lie algebras, quantum bi-Hamiltonian systems, integrable systems and tensor hierarchies in mathematical physics. 

Here we list a few examples of Nijenhuis operators on Lie algebras.

\begin{exam}\label{nij-op-exam}
(i) Let $(\mathfrak{g}, [~,~]_\mathfrak{g})$ be any Lie algebra. Then the identity map $\mathrm{Id}_\mathfrak{g}$ is trivially a Nijenhuis operator.

\medskip

(ii) Let $(\mathfrak{g}, [~,~]_\mathfrak{g})$ be any Lie algebra and $N: \mathfrak{g} \rightarrow \mathfrak{g}$ be a Nijenhuis operator on it.

\quad (a) Then for any scalar $\lambda \in {\bf k}$, the scalar multiplication $\lambda N$ is also a Nijenhuis operator.

\quad (b) For any $k \geq 0$, the $k$-th power $N^k$ is also a Nijenhuis operator. Moreover, for $k, l \geq 0$, the Nijenhuis operators $N^k$ and $N^l$ are compatible in the sense that their sum $N^k + N^l$ is also a Nijenhuis operator. In general, any polynomial $p (N)$ of the Nijenhuis operator $N$ is also a Nijenhuis operator \cite{koss}.

\medskip

(iii) Let $(A, ~ \! \cdot ~ \!)$ be an associative algebra. Then a linear map $N: A \rightarrow A$ is said to be a Nijenhuis operator \cite{gra-bi} on the associative algebra if 
\begin{align*}
N (a) \cdot N (b) = N ( N(a) \cdot b + a \cdot N (b) - N (a \cdot b)), \text{ for } a, b \in A.
\end{align*}
In this case, it is easy to see that $N$ is a Nijenhuis operator on the commutator Lie algebra $(A, [~,~])$, where $[a, b] = a \cdot b - b \cdot a$, for $a, b \in A$.

\medskip

(iv) A complex Lie algebra is a Lie algebra over the field of complex numbers. Equivalently, a complex Lie algebra can be described as a real Lie algebra $(\mathfrak{g}, [~,~]_\mathfrak{g})$ endowed with a $\mathbb{R}$-linear map $j : \mathfrak{g} \rightarrow \mathfrak{g}$ satisfying $j^2 = -\mathrm{id}_\mathfrak{g}$ and $[ j(x) , y]_\mathfrak{g} = [x, j (y)]_\mathfrak{g}  = j [x, y]_\mathfrak{g}$, for all $x, y \in \mathfrak{g}$. Then it turns out that $j$ is a Nijenhuis operator on the real Lie algebra  $(\mathfrak{g}, [~,~]_\mathfrak{g})$.

\medskip

(v) Let $(\mathfrak{g}, [~,~]_\mathfrak{g})$ be a Lie algebra whose underlying space $\mathfrak{g}$ has a direct sum decomposition $\mathfrak{g} = \mathfrak{g}_1 \oplus \mathfrak{g}_2$ into two Lie subalgebras. Let $p_1 , p_2 : \mathfrak{g} \rightarrow \mathfrak{g}$ be the projections onto the subspaces $\mathfrak{g}_1$ and $\mathfrak{g}_2$, respectively. Then $p_1$ and $p_2$ are both Nijenhuis operators. Any linear combination of $p_1$ and $p_2$ is also a Nijenhuis operator.

\medskip

(vi) Let $(\mathfrak{g}, [~,~]_\mathfrak{g})$ be any Lie algebra and $(V, \rho)$ be a representation of it. That is, $\rho : \mathfrak{g} \rightarrow \mathrm{End}(\mathfrak{g})$ is a Lie algebra homomorphism. Then a linear map $r: V \rightarrow \mathfrak{g}$ is said to be a {\em relative Rota-Baxter operator} \cite{jiang-sheng} if
\begin{align*}
[r(u), r(v)]_\mathfrak{g} = r (\rho_{r(u)} v - \rho_{r(v) } u), \text{ for all } u, v \in V.
\end{align*}

\quad (a) If $r: V \rightarrow \mathfrak{g}$ is a relative Rota-Baxter operator then its lift $\widetilde{r} : \mathfrak{g} \oplus V \rightarrow \mathfrak{g} \oplus V$ defined by $\widetilde{r} (x, u) := (r(u), 0)$ is a Nijenhuis operator on the semidirect product Lie algebra $\mathfrak{g} \ltimes V = (\mathfrak{g} \oplus V, [~,~]_\ltimes)$.

\quad (b) If $r_1, r_2: V \rightarrow \mathfrak{g}$ are two relative Rota-Baxter operators that are compatible (in the sense that the sum $r_1 + r_2$ is also a relative Rota-Baxter operator) and $r_2$ is invertible then $r_1 r_2^{-1}$ is a Nijenhuis operator.
\end{exam}

\begin{defn}
A {\bf Nijenhuis Lie algebra} is a Lie algebra $(\mathfrak{g}, [~, ~]_\mathfrak{g})$ endowed with a distinguished Nijenhuis operator $N : \mathfrak{g} \rightarrow \mathfrak{g}$ on it.

A Nijenhuis Lie algebra as above is denoted by the triple $(\mathfrak{g}, [~,~]_\mathfrak{g}, N)$ or simply by $(\mathfrak{g}, N)$ when the Lie bracket of $\mathfrak{g}$ is clear from the context.
\end{defn}

%Let  $(\mathfrak{g}, [~,~]_\mathfrak{g}, N)$ and  $(\mathfrak{g}', [~,~]_{\mathfrak{g}'}, N')$ be two Nijenhuis Lie algebras. 
A {\em homomorphism} of Nijenhuis Lie algebras from  $(\mathfrak{g}, [~,~]_\mathfrak{g}, N)$ to  $(\mathfrak{h}, [~,~]_{\mathfrak{h}}, S)$ is given by a Lie algebra homomorphism $\Phi : \mathfrak{g} \rightarrow \mathfrak{h}$ satisfying additionally $S \circ \Phi = \Phi \circ N$. Further, it is said to be an {\em isomorphism} if $\Phi$ is a linear isomorphism. We denote the group of all Nijenhuis Lie algebra automorphisms of  $(\mathfrak{g}, [~,~]_\mathfrak{g}, N)$ by the notation $\mathrm{Aut} (\mathfrak{g}, N)$.

\begin{defn}
Let $(\mathfrak{g}, [~,~]_\mathfrak{g}, N)$ be a Nijenhuis Lie algebra. A {\bf Nijenhuis representation} of $(\mathfrak{g}, [~,~]_\mathfrak{g}, N)$ is given by a triple $(V, \rho, S)$, where $(V, \rho)$ is a usual representation of the Lie algebra $(\mathfrak{g}, [~,~]_\mathfrak{g})$ and $S : V \rightarrow V$ is a linear map satisfying
\begin{align*}
    \rho_{N(x) } S (v) =  S \big(  \rho_{N(x)} v + \rho_x S (v) - S (\rho_x v)   \big), \text{ for } x \in \mathfrak{g}, v \in V.
\end{align*}
\end{defn}

\begin{exam}
(i) Let $(\mathfrak{g}, [~,~]_\mathfrak{g})$ be a Lie algebra and $(V, \rho)$ be a representation of it. Then the triple $(V, \rho, \mathrm{Id}_V)$ is a Nijenhuis representation of the Nijenhuis Lie algebra $(\mathfrak{g}, [~,~]_\mathfrak{g}, \mathrm{Id}_\mathfrak{g})$.

\medskip

(ii) Any Nijenhuis Lie algebra $(\mathfrak{g}, [~,~]_\mathfrak{g}, N)$ can be regarded as a Nijenhuis representation $(\mathfrak{g}, \rho_\mathrm{ad}, N)$ of itself, where $\rho_\mathrm{ad} : \mathfrak{g} \rightarrow \mathrm{End} (\mathfrak{g})$ is the adjoint representation given by $(\rho_\mathrm{ad})_x y := [x, y]_\mathfrak{g}$, for $x, y \in \mathfrak{g}$.

\medskip

(iii) Let $(\mathfrak{g}, [~,~]_\mathfrak{g}, N)$ be a Nijenhuis Lie algebra and $(V, \rho, S)$ be a Nijenhuis representation of it. Then for any $k \geq 0$, the triple $(V, \rho, S^k)$ is a Nijenhuis representation of the Nijenhuis Lie algebra $ (\mathfrak{g}, [~,~]_\mathfrak{g}, N^k)$.

\medskip

(iv) Let $(\mathfrak{g}, [~,~]_\mathfrak{g}, N)$ and $(\mathfrak{h}, [~,~]_\mathfrak{h}, S)$ be two Nijenhuis Lie algebras, and $\Phi : \mathfrak{g} \rightarrow \mathfrak{h}$ be a homomorphism of Nijenhuis Lie algebras. Then the triple $(\mathfrak{h}, \rho_\Phi, S)$ becomes a Nijenhuis representation of the Nijenhuis Lie algebra $(\mathfrak{g}, [~,~]_\mathfrak{g}, N)$, where $\rho_\Phi : \mathfrak{g} \rightarrow \mathrm{End} (\mathfrak{h})$ is given by $(\rho_\Phi)_x h := [\Phi (x), h]_\mathfrak{h}$, for $x \in \mathfrak{g}$ and $h \in \mathfrak{h}$. This Nijenhuis representation is said to be induced by the homomorphism $\Phi$.

\medskip

(v) Let $(V, \rho_V, S)$ and $(W, \rho_W, N_W)$ be two Nijenhuis representations of the Nijenhuis Lie algebra $(\mathfrak{g}, [~,~]_\mathfrak{g}, N)$. If $S$ and $N_W$ are both projections (i.e. $S^2 =S$ and $(N_W)^2 = N_W$) then the triple $(V \otimes W, \rho_{V \otimes W}, S \otimes N_W)$ is also a Nijenhuis representation of the Nijenhuis Lie algebra $(\mathfrak{g}, [~,~]_\mathfrak{g}, N)$, where 
\begin{align*}
(\rho_{V \otimes W})_x (v \otimes w) = (\rho_V)_x v \otimes w + v \otimes (\rho_W)_x w, \text{ for } x \in \mathfrak{g} \text{ and } v \otimes w \in V \otimes W.
\end{align*}

(vi) Let $(\mathfrak{g}, [~,~]_\mathfrak{g}, N)$ be a Nijenhuis Lie algebra and $(V, \rho)$ be a representation of the underlying Lie algebra $(\mathfrak{g}, [~,~]_\mathfrak{g})$. A linear map $ \zeta : V \rightarrow V$ is said to be {\em admissible} with respect to the map $\rho$ if
\begin{align*}
\zeta (\rho_{N (x)} v) + \rho_x \zeta^2 (v) = \rho_{N (x)} \zeta (v) + \zeta (\rho_x \zeta (v)), \text{ for all } x \in \mathfrak{g}, v \in V.
\end{align*}
In this case, the triple $(\mathfrak{g}^*, \rho_{\mathrm{coad}}, \zeta^*)$ is a Nijenhuis representation of the Nijenhuis Lie algebra $(\mathfrak{g}, [~,~]_\mathfrak{g}, N)$, where $\rho_\mathrm{coad} : \mathfrak{g} \rightarrow \mathrm{End} (\mathfrak{g}^*)$ is the coadjoint representation given by $(\rho_\mathrm{coad})_x (\alpha) (y) = - \alpha ([x, y]_\mathfrak{g})$, for $x, y \in \mathfrak{g}$ and $\alpha \in \mathfrak{g}^*$.

\medskip

(vii) A {\em relative Rota-Baxter Lie algebra} is a triple $((\mathfrak{g}, [~,~]_\mathfrak{g}), (V, \rho), r)$ consisting of a Lie algebra $(\mathfrak{g}, [~,~]_\mathfrak{g})$, a representation $(V, \rho)$ and a relative Rota-Baxter operator $r : V \rightarrow \mathfrak{g}$. It follows from Example \ref{nij-op-exam} (vi) that a relative Rota-Baxter Lie algebra $((\mathfrak{g}, [~,~]_\mathfrak{g}), (V, \rho), r)$ gives rise to a Nijenhuis Lie algebra $(\mathfrak{g} \oplus V, [~,~]_\ltimes, \widetilde{r})$. Recall that a {\em representation} \cite{jiang-sheng} of a relative Rota-Baxter Lie algebra $((\mathfrak{g}, [~,~]_\mathfrak{g}), (V, \rho), r)$ is a quadruple $((\mathfrak{h}, \rho_\mathfrak{h}), (W, \rho_W), \mu, s )$, where $(\mathfrak{h}, \rho_\mathfrak{h})$ and $(W, \rho_W)$ are both usual representations of the Lie algebra $(\mathfrak{g}, [~,~]_\mathfrak{g})$, and $\mu : \mathfrak{h} \rightarrow \mathrm{Hom} (V, W)$, $s : W \rightarrow \mathfrak{h}$ are linear maps that satisfy the following conditions:
\begin{align*}
\mu_{ (\rho_\mathfrak{h})_x h} v =~& (\rho_W)_x \mu_h v - \mu_h \rho_x v,\\
(\rho_\mathfrak{h})_{r(v)} s (w) =~& s \big(  (\rho_W)_{r (v)} w - \mu_{s (w)} v   \big),
\end{align*}
for all $x \in \mathfrak{g}$, $h \in \mathfrak{h}$, $v \in V$ and $w \in W$. In this case, we define linear maps $\sigma : \mathfrak{g} \oplus V \rightarrow \mathrm{End} (\mathfrak{h} \oplus W)$ and $\widetilde{s} : \mathfrak{h} \oplus W \rightarrow \mathfrak{h} \oplus W$ by
\begin{align*}
\sigma_{(x, v)} (h, w) := \big(  (\rho_\mathfrak{h})_x h ~ \! , ~ \! (\rho_W)_x w - \mu_h v  \big) ~~~~~ \text{ and } ~~~~ \widetilde{s} (h, w) = (s(w), 0),
\end{align*}
for $(x, v) \in \mathfrak{g} \oplus V$ and $(h, w) \in \mathfrak{h} \oplus W$. Then it can be checked that the triple $(\mathfrak{h} \oplus W, \sigma, \widetilde{s})$ is a Nijenhuis representation of the Nijenhuis Lie algebra $(\mathfrak{g} \oplus V, [~,~]_\ltimes, \widetilde{r})$.
\end{exam}

The next result shows that the standard semidirect product construction can be generalized to the context of Nijenhuis Lie algebras straightforwardly.

\begin{prop}
    Let $(\mathfrak{g}, [~,~]_\mathfrak{g}, N)$ be a Nijenhuis Lie algebra and $(V, \rho, S)$ be a Nijenhuis representation of it. Then $(\mathfrak{g} \oplus V, [~,~]_\ltimes, N \oplus S)$ is also a Nijenhuis Lie algebra, called the semidirect product.
\end{prop}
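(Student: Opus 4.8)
The plan is to verify directly that the diagonal operator $N \oplus S$, defined by $(N \oplus S)(x, u) = (N(x), S(u))$, satisfies the defining identity of a Nijenhuis operator on the semidirect product Lie algebra $\mathfrak{g} \ltimes V$, whose bracket is $[(x, u), (y, v)]_\ltimes = ([x, y]_\mathfrak{g}, \, \rho_x v - \rho_y u)$. Since both sides of the Nijenhuis identity take values in $\mathfrak{g} \oplus V$, I would check the identity componentwise, treating the $\mathfrak{g}$-component and the $V$-component separately.

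First I would compute the left-hand side $[(N \oplus S)(x,u), (N \oplus S)(y,v)]_\ltimes$, which equals $\big([N(x), N(y)]_\mathfrak{g}, \, \rho_{N(x)} S(v) - \rho_{N(y)} S(u)\big)$. Next I would expand the three bracket terms on the right-hand side, namely $[(N \oplus S)(x,u), (y,v)]_\ltimes$, $[(x,u), (N \oplus S)(y,v)]_\ltimes$, and $(N \oplus S)[(x,u),(y,v)]_\ltimes$, and apply $N \oplus S$ to their alternating sum. For the $\mathfrak{g}$-component, the verification reduces exactly to the Nijenhuis identity for $N$ on $\mathfrak{g}$, which holds by hypothesis.

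The main content lies in the $V$-component. After collecting terms and applying $S$, the resulting expression is $S(\rho_{N(x)}v) + S(\rho_x S(v)) - S^2(\rho_x v)$ together with the analogous $(y,u)$-terms carrying a global minus sign. The decisive observation is that the first group is precisely the right-hand side of the Nijenhuis representation axiom $\rho_{N(x)} S(v) = S\big(\rho_{N(x)} v + \rho_x S(v) - S(\rho_x v)\big)$, and hence collapses to $\rho_{N(x)} S(v)$; applying the same axiom a second time with the pair $(y, u)$ collapses the remaining group to $-\rho_{N(y)} S(u)$. Together these reproduce the $V$-component $\rho_{N(x)} S(v) - \rho_{N(y)} S(u)$ of the left-hand side.

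The computation is entirely routine, so the only point requiring genuine care is the bookkeeping: one must correctly track the signs arising from the antisymmetric form of the semidirect product bracket and recognize that the cross terms involving $S(\rho_x S(v))$ and $S^2(\rho_x v)$ are exactly those in the representation axiom. Thus the principal obstacle is organizational—regrouping the $u$- and $v$-dependent terms cleanly into two applications of the Nijenhuis representation identity—rather than conceptual.
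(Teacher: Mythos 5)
Your proof is correct: the componentwise verification works exactly as you describe, with the $\mathfrak{g}$-component reducing to the Nijenhuis identity for $N$ and the $V$-component collapsing via two applications of the Nijenhuis representation axiom. The paper omits the proof entirely (calling the construction straightforward), and your direct computation is precisely the routine verification it has in mind.
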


\medskip 

\section{Non-abelian cohomology of Nijenhuis Lie algebras}\label{sec3}
In this section, we first introduce the non-abelian cohomology of a Nijenhuis Lie algebra with values in another Nijenhuis Lie algebra. We show that the non-abelian cohomology group parametrizes the isomorphism classes of all non-abelian extensions of Nijenhuis Lie algebras. In particular, we show that the set of all isomorphism classes of abelian extensions of a Nijenhuis Lie algebra by a given Nijenhuis representation has a bijection with the second cohomology group.

\medskip

%Let $(\mathfrak{g}, [~,~]_\mathfrak{g}, N)$ and $(\mathfrak{h}, [~,~]_\mathfrak{h}, S)$ be two Nijenhuis Lie algebras.
\begin{defn} 
A {\bf non-abelian $2$-cocycle} of a Nijenhuis Lie algebra $(\mathfrak{g}, [~,~]_\mathfrak{g}, N)$ with values in another Nijenhuis Lie algebra $(\mathfrak{h}, [~,~]_\mathfrak{h}, S)$ is a triple $(\chi, \psi, F)$ consisting of linear maps $\chi : \wedge^2 \mathfrak{g} \rightarrow \mathfrak{h}$, $\psi : \mathfrak{g} \rightarrow \mathrm{Der} (\mathfrak{h})$ and $F: \mathfrak{g} \rightarrow \mathfrak{h}$ such that for all $x, y , z\in \mathfrak{g}$ and $h \in \mathfrak{h}$, the following set of identities are hold:
\begin{align}
&\psi_x \psi_y (h) - \psi_y \psi_x (h) - \psi_{[x, y]_\mathfrak{g}} (h) = [\chi (x, y), h]_\mathfrak{h}, \label{nc1}
\end{align}
\begin{align}
&\psi_x \chi (y, z) + \psi_y \chi (z, x) + \psi_z \chi (x, y) - \chi ([x, y]_\mathfrak{g} , z) - \chi ([y, z]_\mathfrak{g}, x) - \chi ([z, x]_\mathfrak{g}, y) = 0,  \label{nc2} 
\end{align}
\begin{align}
\psi_{N(x)} S (h) = S \big(  \psi_{N(x)} h + \psi_x S(h) - S (\psi_x h)   \big) + S [F(x), h]_\mathfrak{h} - [F(x), S(h)]_\mathfrak{h},  \label{nc3}
\end{align}
\begin{align}
& \chi (N (x), N (y)) - S \big( \chi ( N (x), y) + \chi (x, N (y)) - S ( \chi ( x, y) ) \big) - F \big( [N (x), y]_\mathfrak{g} + [x, N (y)]_\mathfrak{g} - N [x, y]_\mathfrak{g}    \big) \label{nc4} \\ \medskip \medskip \medskip
& \qquad \quad + \psi_{N(x)} F (y) - \psi_{N (y)} F (x) - S \big( \psi_x F (y) - \psi_y F (x) - F [x, y]_\mathfrak{g} \big) + [F (x), F (y)]_\mathfrak{h} = 0. \nonumber
\end{align}
\end{defn}

\medskip

Let $(\chi, \psi, F)$ and $(\chi', \psi', F')$ be two non-abelian $2$-cocycles of the Nijenhuis Lie algebra $(\mathfrak{g}, [~,~]_\mathfrak{g}, N)$ with values in the Nijenhuis Lie algebra $(\mathfrak{h}, [~,~]_\mathfrak{h}, S)$. They are said to be {\bf equivalent} if there exists a linear map $\varphi : \mathfrak{g} \rightarrow \mathfrak{h}$ such that
\begin{align}
\psi_x h - \psi'_x h =~& [\varphi (x), h]_\mathfrak{h},   \label{nce1}\\
\chi (x, y) - \chi' (x, y) =~&  \psi'_x (\varphi (y)) - \psi'_y (\varphi (x)) - \varphi ([x, y]_\mathfrak{g}) + [\varphi (x), \varphi (y)]_\mathfrak{h},  \label{nce2} \\
F(x) - F' (x) =~& S (\varphi (x)) - \varphi (N (x)), \label{nce3}
\end{align}
for all $x, y \in \mathfrak{g}$ and $h \in \mathfrak{h}$. We denote the set of all equivalence classes of such non-abelian $2$-cocycles by $H^2_\mathrm{nab} ((\mathfrak{g}, N); (\mathfrak{h}, S))$, and call it the {\bf non-abelian cohomology group} of the Nijenhuis Lie algebra $(\mathfrak{g}, [~,~]_\mathfrak{g}, N)$ with values in the Nijenhuis Lie algebra $(\mathfrak{h}, [~,~]_\mathfrak{h}, S)$.

\begin{remark}
It is important to remark that the non-abelian cohomology of a Lie algebra with values in another Lie algebra was considered in \cite{inas}, \cite{fre}. More precisely, a non-abelian $2$-cocycle of the Lie algebra $(\mathfrak{g}, [~,~]_\mathfrak{g})$ with values in the Lie algebra $(\mathfrak{h}, [~,~]_\mathfrak{h})$ is a pair $(\chi, \psi)$ of linear maps $\chi : \wedge^2 \mathfrak{g} \rightarrow \mathfrak{h}$ and $\psi : \mathfrak{g} \rightarrow \mathrm{Der} (\mathfrak{h})$ satisfying the identities (\ref{nc1}) and (\ref{nc2}). Hence by taking $N = \mathrm{Id}_\mathfrak{g}$ and $S = \mathrm{Id}_\mathfrak{h}$ in the identities (\ref{nc3}) and (\ref{nc4}), we get that a triple $(\chi, \psi, F)$ is a non-abelian $2$-cocycle of the Nijenhuis Lie algebra $(\mathfrak{g}, [~,~]_\mathfrak{g}, \mathrm{Id}_\mathfrak{g})$ with values in the Nijenhuis Lie algebra $(\mathfrak{h}, [~,~]_\mathfrak{h}, \mathrm{Id}_\mathfrak{h})$ if and only if $(\chi, \psi)$ is a non-abelian $2$-cocycle of the Lie algebra $(\mathfrak{g}, [~,~]_\mathfrak{g})$ with values in $(\mathfrak{h}, [~,~]_\mathfrak{h})$ and $F : \mathfrak{g} \rightarrow \mathfrak{h}$ is a linear map whose image is an abelian Lie subalgebra of $(\mathfrak{h}, [~,~]_\mathfrak{h}).$
\end{remark}

\medskip

We now turn our attention to non-abelian extensions of Nijenhuis Lie algebras. More precisely, we have the following.

\begin{defn}
A {\bf non-abelian extension} of a Nijenhuis Lie algebra $(\mathfrak{g}, [~,~]_\mathfrak{g}, N)$ by another Nijenhuis Lie algebra $(\mathfrak{h}, [~,~]_\mathfrak{h}, S)$ is a short exact sequence of Nijenhuis Lie algebras of the form
\begin{align}\label{extension}
\xymatrix{
0 \ar[r] & (\mathfrak{h}, [~,~]_\mathfrak{h}, S) \ar[r]^i & (\mathfrak{e}, [~,~]_\mathfrak{e}, U) \ar[r]^p & (\mathfrak{g}, [~,~]_\mathfrak{g}, N) \ar[r] & 0.
}
\end{align}
\end{defn}
\noindent We often denote the non-abelian extension as above simply by the Nijenhuis Lie algebra $(\mathfrak{e}, [~,~]_\mathfrak{e}, U)$ when the short exact sequence is clear from the context.

Let $(\mathfrak{e}, [~,~]_\mathfrak{e}, U)$ and $(\mathfrak{e}', [~,~]_{\mathfrak{e}'}, U')$ be two non-abelian extensions. They are said to be {\bf isomorphic} if there exists an isomorphism $\Phi : \mathfrak{e} \rightarrow \mathfrak{e}'$ of Nijenhuis Lie algebras making the following diagram commutative:
\begin{align}\label{iso-extension}
\xymatrix{
0 \ar[r] & (\mathfrak{h}, [~,~]_\mathfrak{h}, S) \ar@{=}[d] \ar[r]^i & (\mathfrak{e}, [~,~]_\mathfrak{e}, U) \ar[d]^\Phi \ar[r]^p & (\mathfrak{g}, [~,~]_\mathfrak{g}, N) \ar@{=}[d] \ar[r] & 0 \\ 
0 \ar[r] & (\mathfrak{h}, [~,~]_\mathfrak{h}, S) \ar[r]_{i'} & (\mathfrak{e}', [~,~]_{\mathfrak{e}'}, U') \ar[r]_{p'} & (\mathfrak{g}, [~,~]_\mathfrak{g}, N) \ar[r] & 0.
}
\end{align}
We denote the set of all isomorphism classes of non-abelian extensions of the Nijenhuis Lie algebra $(\mathfrak{g}, [~,~]_\mathfrak{g}, N)$ by the Nijenhuis Lie algebra $(\mathfrak{h}, [~,~]_\mathfrak{h}, S)$ simply by the notation $\mathcal{E}xt_{nab} ((\mathfrak{g}, N); (\mathfrak{h}, S))$. Our first result of this paper gives a parametrization of $\mathcal{E}xt_{nab} ((\mathfrak{g}, N); (\mathfrak{h}, S))$ by the non-abelian cohomology group defined above. More precisely, we have the following.

\begin{thm}\label{main-thm-sec3}
Let $(\mathfrak{g}, [~,~]_\mathfrak{g}, N)$ and $(\mathfrak{h}, [~,~]_\mathfrak{h}, S)$ be two Nijenhuis Lie algebras. Then there is a bijection
\begin{align*}
\mathcal{E}xt_{nab} ((\mathfrak{g}, N); (\mathfrak{h}, S)) ~ \! \cong ~ \! H^2_{nab}  ((\mathfrak{g}, N); (\mathfrak{h}, S)). 
\end{align*}
\end{thm}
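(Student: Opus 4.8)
The plan is to construct explicit maps in both directions and verify they are mutually inverse bijections. Starting from an extension as in (\ref{extension}), I would choose a linear section $s : \mathfrak{g} \rightarrow \mathfrak{e}$ of $p$ (which exists since we work over a field), giving a vector space splitting $\mathfrak{e} = i(\mathfrak{h}) \oplus s(\mathfrak{g})$. Identifying $\mathfrak{h}$ with the ideal $i(\mathfrak{h})$, I would set
\[
\psi_x(h) := [s(x), h]_\mathfrak{e}, \quad \chi(x,y) := [s(x), s(y)]_\mathfrak{e} - s([x,y]_\mathfrak{g}), \quad F(x) := U(s(x)) - s(N(x)).
\]
Using $p \circ U = N \circ p$ and $p \circ s = \mathrm{Id}_\mathfrak{g}$ one checks that $\chi$ and $F$ take values in $\mathfrak{h}$, while $\psi_x$ is a derivation of $\mathfrak{h}$ because $\mathfrak{h}$ is an ideal. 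The conditions (\ref{nc1}) and (\ref{nc2}) are then exactly the Jacobi identity of $(\mathfrak{e}, [~,~]_\mathfrak{e})$ read off in the splitting, as in the classical non-abelian cohomology of Lie algebras; the genuinely new conditions (\ref{nc3}) and (\ref{nc4}) arise from the Nijenhuis identity for $U$ evaluated on the pairs $(s(x), h)$ and $(s(x), s(y))$ respectively, after substituting the definitions and using $U|_\mathfrak{h} = S$.

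Next I would show this assignment descends to cohomology. Replacing $s$ by another section $s'$ and setting $\varphi(x) := s'(x) - s(x) \in \mathfrak{h}$, a direct computation recovers precisely the equivalence relations (\ref{nce1})--(\ref{nce3}); hence the cocycle is well-defined up to equivalence and we obtain a map $\mathcal{E}xt_{nab} \rightarrow H^2_{nab}$. One must also verify that isomorphic extensions yield equivalent cocycles, which follows by transporting a chosen section across the isomorphism $\Phi$ in diagram (\ref{iso-extension}).

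Conversely, given a non-abelian $2$-cocycle $(\chi, \psi, F)$, I would equip the vector space $\mathfrak{g} \oplus \mathfrak{h}$ with the bracket
\[
[(x, h), (y, k)]_\mathfrak{e} := \big( [x,y]_\mathfrak{g}, ~ \chi(x,y) + \psi_x(k) - \psi_y(h) + [h,k]_\mathfrak{h} \big)
\]
and the operator $U(x, h) := (N(x), F(x) + S(h))$. Here (\ref{nc1}) and (\ref{nc2}) guarantee the Jacobi identity, while (\ref{nc3}) and (\ref{nc4}) are exactly what is needed for $U$ to be a Nijenhuis operator for this bracket; together with the obvious inclusion and projection this produces a non-abelian extension. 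To finish I would check that equivalent cocycles give isomorphic extensions, via the isomorphism $\Phi(x,h) := (x, h + \varphi(x))$, and that the two constructions are mutually inverse, thereby establishing the claimed bijection.

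The main obstacle I anticipate is the bookkeeping in matching (\ref{nc3}) and (\ref{nc4}) with the Nijenhuis identity for $U$, since this has no precedent in the purely Lie-algebraic theory. Concretely, verifying that the proposed $U$ on $\mathfrak{g} \oplus \mathfrak{h}$ satisfies the Nijenhuis relation requires expanding $[U(x,h), U(y,k)]_\mathfrak{e}$ and the corresponding right-hand side term by term and watching the $\mathfrak{h}$-components collapse onto (\ref{nc4}), with (\ref{nc3}) controlling the mixed contribution from the $\psi$-part. Keeping track of the signs and of the interaction between $S$, $F$ and the bracket is precisely where the care is needed.
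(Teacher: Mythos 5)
Your proposal is correct and follows essentially the same route as the paper: the same section-dependent cocycle $(\chi,\psi,F)$ with $F = Us - sN$, the same change-of-section and isomorphism-invariance checks, and the same inverse construction on $\mathfrak{g}\oplus\mathfrak{h}$ with the bracket $[~,~]_{\chi,\psi}$ and operator $U_F(x,h)=(N(x),S(h)+F(x))$. The only difference is an immaterial sign convention in the choice of $\varphi$ when comparing two sections.
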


\begin{proof}
Let $(\mathfrak{e}, [~,~]_\mathfrak{e}, U)$ be a non-abelian extension as of (\ref{extension}). Choose any section $s:\mathfrak{g} \rightarrow \mathfrak{e}$ of the map $p$ (i.e. $s$ is a linear map satisfying $ps = \mathrm{Id}_\mathfrak{g}$). Then we define linear maps $\chi : \wedge^2 \mathfrak{g} \rightarrow \mathfrak{h}$, $\psi : \mathfrak{g} \rightarrow \mathrm{Der}(\mathfrak{h})$ and $F : \mathfrak{g} \rightarrow \mathfrak{h}$ respectively by
\begin{align*}
\chi (x, y) := [s(x), s(y)]_\mathfrak{e} - s [x, y]_\mathfrak{g}, \qquad \psi_x h := [s(x) , h]_\mathfrak{e} \quad \text{ and } \quad F(x) := (Us - s N)(x),
\end{align*}
for all $x, y \in \mathfrak{g}$ and $h \in \mathfrak{h}$. It has been observed in \cite{fre} that the maps $\chi$ and $\psi$ satisfy the identities (\ref{nc1}) and (\ref{nc2}). For any $x \in \mathfrak{g}$ and $h \in \mathfrak{h}$, we observe that
\begin{align*}
&\psi_{N (x)} S (h) - S ( \psi_{N (x)} h + \psi_x S(h) - S (\psi_x h)) - S [F(x), h]_\mathfrak{h} + [F (x), S (h)]_\mathfrak{h} \\
&= \cancel{ [sN (x), S (h)]_\mathfrak{e}} - S \big(  \cancel{[sN (x), h]_\mathfrak{e}} + [s(x), S (h)]_\mathfrak{e}  - S [s(x), h]_\mathfrak{e} \big)\\
& \qquad \quad - S (  [Us(x), h]_\mathfrak{e} - \cancel{[sN (x), h]_\mathfrak{e} } ) + [Us(x), S(h)]_\mathfrak{e} - \cancel{ [s N (x), S(h)]_\mathfrak{e} } \\
&= - U \big( [s(x), U (h)]_\mathfrak{e} + [Us (x), h]_\mathfrak{e} - U [s(x), h]_\mathfrak{e} \big) + [Us (x), U(h)]_\mathfrak{e} \qquad (\text{as } S = U |_\mathfrak{h})\\
&= 0 \quad (\text{as } U \text{ is a Nijenhuis operator}).
\end{align*}
Hence the identity (\ref{nc3}) also holds. Moreover, for any $x, y \in \mathfrak{g}$,
\begin{align*}
&\chi (N(x), N (y)) - S \big(  \chi (N (x), y) + \chi (x, N (y)) - S \chi (x, y)  \big) - F \big( [N(x), y]_\mathfrak{g} + [x, N (y) ]_\mathfrak{g} - N [x, y]_\mathfrak{g}  \big) \\
& \qquad \qquad + \psi_{N (x)} F (y) - \psi_{N(y)} F (x) - S \big(  \psi_x F (y) - \psi_y F (x) - F [x, y]_\mathfrak{g} \big) + [F (x), F (y)]_\mathfrak{h} \\
&= [s N (x) , s N (y)]_\mathfrak{e} \underbrace{-s [N (x), N(y)]_\mathfrak{g}}_{(A)} - S ( [s N (x), s(y)]_\mathfrak{e} - s [N (x), y]_\mathfrak{g}) - S ( [s(x), s N (y)]_\mathfrak{e} - s [x, N (y)]_\mathfrak{g}) \\
& \quad + S^2 ( \underbrace{[s(x), s(y)]_\mathfrak{e}}_{(B)} - s [x, y]_\mathfrak{g}) - Us [N (x), y]_\mathfrak{g} + \underbrace{sN [N (x), y]_\mathfrak{g}}_{(A)} - Us [x, N (y)]_\mathfrak{g} + \underbrace{ sN [x, N (y)]_\mathfrak{g}}_{(A)} \\
& \quad + UsN [x, y]_\mathfrak{g} \underbrace{ - sN^2 [x, y]_\mathfrak{g}}_{(A)} + [sN (x), Us (y)]_\mathfrak{e} - [s N (x), s N (y)]_\mathfrak{e} - [sN (y), Us (x)]_\mathfrak{e} + [sN (y), sN (x)]_\mathfrak{e} \\
& \quad - S  \big(  \underbrace{[s(x), Us (y)]_\mathfrak{e}}_{(B)} - [s(x), s N (y)]_\mathfrak{e} \underbrace{- [s(y), Us (x)]_\mathfrak{e}}_{(B)} + [s(y), s N (x)]_\mathfrak{e} - Us [x, y]_\mathfrak{g} + s N [x, y]_\mathfrak{g} \big) \\
& \quad  + \underbrace{ [Us (x), Us (y)]_\mathfrak{e}}_{(B)} - [Us (x), s N (y)]_\mathfrak{e} - [sN (x), Us (y)]_\mathfrak{e} + [sN (x), s N (y)]_\mathfrak{e}.
\end{align*}
In the above expression, the terms underlined with (A) are cancelled an $N$ is a Nijenhuis operator on $(\mathfrak{g}, [~,~]_\mathfrak{g})$, and the terms underlined with (B) are also cancelled as $U$ is a Nijenhuis operator on $(\mathfrak{e}, [~,~]_\mathfrak{e})$. All the other terms are cancelled with each other. (Note that, while cancelling the terms, we often need to use that $S= U |_\mathfrak{h}$). Hence the entire above expression vanishes and thus verifies the identity (\ref{nc4}). This shows that $(\chi, \psi, F)$ is a non-abelian $2$-cocycle. However, this non-abelian $2$-cocycle depends on the section $s$. For any other section $\widetilde{s} : \mathfrak{g} \rightarrow \mathfrak{e}$, let $(\widetilde{\chi}, \widetilde{\psi}, \widetilde{F})$ be the non-abelian $2$-cocycle produced from the same non-abelian extension $(\mathfrak{e}, [~,~]_\mathfrak{e}, U)$. We define a map $\varphi : \mathfrak{g} \rightarrow \mathfrak{h}$ by $\varphi (x) := s(x) - \widetilde{s} (x)$, for $x \in \mathfrak{g}$. Then we observe that
\begin{align*}
\psi_x (h) - \widetilde{\psi}_x (h) =~& [s(x) , h ]_\mathfrak{e} - [\widetilde{s}(x), h]_\mathfrak{e} = [\varphi (x), h]_\mathfrak{h},\\ \medskip \medskip
\chi (x, y) - \widetilde{\chi} (x, y) =~& [s(x), s(y)]_\mathfrak{e} - s [x, y]_\mathfrak{g} - [\widetilde{s} (x) , \widetilde{s} (y)]_\mathfrak{e} + \widetilde{s} [x, y]_\mathfrak{g} \\
=~& [ (\varphi + \widetilde{s})(x), (\varphi + \widetilde{s}) (y)]_\mathfrak{e} - (\varphi + \widetilde{s}) [x, y]_\mathfrak{g} - [ \widetilde{s}(x), \widetilde{s} (y)]_\mathfrak{e} + \widetilde{s} [x, y]_\mathfrak{g} \\
=~& \widetilde{\psi}_x ( \varphi (y)) - \widetilde{\psi}_y (\varphi (x)) - \varphi [x, y]_\mathfrak{g} + [ \varphi (x), \varphi (y)]_\mathfrak{h}, \\ \medskip \medskip
F(x) - \widetilde{F} (x) =~& (Us -s N) (x) - (U \widetilde{s} - \widetilde{s} N) (x) \\
=~& U (s - \widetilde{s}) (x) - (s - \widetilde{s}) N (x) = S (\varphi (x))- \varphi (N (x)),
\end{align*}
for all $x, y \in \mathfrak{g}$ and $h \in \mathfrak{h}$. This shows that the non-abelian $2$-cocycles $(\chi, \psi, F)$ and $(\widetilde{\chi}, \widetilde{\psi}, \widetilde{F})$ are equivalent. Hence they corresponds to the same element in $H^2_{nab} (( \mathfrak{g}, N); (\mathfrak{h}, S)).$ Next, let $(\mathfrak{e}, [~,~]_\mathfrak{e}, U)$ and $(\mathfrak{e}', [~,~]_{\mathfrak{e}'}, U')$ be two isomorphic non-abelian extensions as of (\ref{iso-extension}). For any section $s$ of the map $p$, the composition $s' = \Phi \circ s$ is a section of the map $p'$. If $(\chi', \psi', F')$ is the non-abelian $2$-cocycle produced from the non-abelian extension $(\mathfrak{e}', [~,~]_{\mathfrak{e}'}, U')$ then
\begin{align*}
\chi' (x, y) =~& [\Phi \circ s (x) , \Phi \circ s (y)]_{\mathfrak{e}'} - \Phi \circ s [x, y]_\mathfrak{g} \\
=~& \Phi  ( [s(x), s(y)]_\mathfrak{e} - s [x, y]_\mathfrak{g} )  = \chi (x, y) \quad (\text{as } \Phi|_\mathfrak{h} = \mathrm{Id}_\mathfrak{h}),\\
\psi'_x (h) =~& [\Phi \circ s (x), h]_{\mathfrak{e}'} \\
=~& [ \Phi (s (x)), \Phi (h)]_{\mathfrak{e}'} = \Phi [s(x), h]_\mathfrak{e} = \psi_x (h)  \quad (\text{as } \Phi|_\mathfrak{h} = \mathrm{Id}_\mathfrak{h}), \\
F' (x) =~& ( U (\Phi \circ s) - (\Phi \circ s) N)(x) \\
=~&  \Phi ( (Us - sN)(x)) = F(x) \quad (\text{as } \Phi |_\mathfrak{h} = \mathrm{Id}_\mathfrak{h}),
\end{align*}
for all $x, y \in \mathfrak{g}$ and $h \in \mathfrak{h}$. This shows that $(\chi, \psi, F) = (\chi', \psi', F')$. As a conclusion, we obtain a well-defined map $\Upsilon : \mathcal{E}xt_{nab}((\mathfrak{g}, N); (\mathfrak{h}, S) ) \rightarrow H^2_{nab} ((\mathfrak{g}, N); (\mathfrak{h}, S) ).$

\medskip

To obtain a map in the other direction, we begin with a non-abelian $2$-cocycle $(\chi, \psi, F)$. Consider the vector space $\mathfrak{g} \oplus \mathfrak{h}$ equipped with the bilinear skew-symmetric bracket
\begin{align*}
[(x, h), (y, k)]_{\chi, \psi} := \big(     [x, y]_\mathfrak{g} ~ \! , ~ \! \psi_x k - \psi_y h + \chi (x, y) + [h, k]_\mathfrak{h} \big),
\end{align*}
for $(x, h), (y, k) \in \mathfrak{g} \oplus \mathfrak{h}$. Then it follows from (\ref{nc1}) and (\ref{nc2}) that $(\mathfrak{g} \oplus \mathfrak{h}, [~, ~]_{\chi, \psi})$ is a Lie algebra (see, for example \cite{fre}). We also define a linear map $U_F : \mathfrak{g} \oplus \mathfrak{h} \rightarrow \mathfrak{g} \oplus \mathfrak{h}$ by
\begin{align*}
U_F (x, h) := \big( N (x) ~ \! , ~ \! S (h) + F (x) \big), \text{ for } (x, h) \in \mathfrak{g} \oplus \mathfrak{h}. 
\end{align*}
For any $(x, h) , (y, k) \in \mathfrak{g} \oplus \mathfrak{h}$, we observe that
\begin{align*}
&[ U_F (x, h), U_F (y, k) ]_{\chi, \psi} \\
&= \big[ \big( N(x), S(h) + F (x) \big), \big( N(y), S(k) + F (y) \big) \big]_{\chi, \psi} \\
&= \big(  [ N(x), N(y)]_\mathfrak{g} ~ \!, ~ \underbrace{\psi_{N(x)} S(k)}_{(A1)} + \underbrace{\psi_{N(x)} F (y)}_{(A2)}  \underbrace{- \psi_{N(y)} S(h)}_{(A3)}  \underbrace{- \psi_{N(y)} F (x)}_{(A4)} + \underbrace{\chi (N(x), N(y))}_{(A5)} \\
& \qquad \qquad \qquad \qquad  \quad + \underbrace{[S(h), S(k)]_\mathfrak{h} }_{(A6)} + \underbrace{[S(h), F (y)]_\mathfrak{h}}_{(A7)} + \underbrace{[F(x), S(k)]_\mathfrak{h}}_{(A8)} + \underbrace{[ F (x), F (y)]_\mathfrak{h}}_{(A9)} \big) \\
&= \bigg(   N[N(x),y]_\mathfrak{g} + N[x, N(y)]_\mathfrak{g} -N^2 [x, y]_\mathfrak{g} ~ \! , ~ \! \underbrace{S ( \psi_{N(x)} k + \psi_{x} S(k) - S (\psi_x k) ) + S [F (x), k]_\mathfrak{h}}_{=~ (A1) + (A8)} \\
& \quad \quad \underbrace{- S \big( \psi_{N(y)} h + \psi_{y} S(h) - S(\psi_y h) \big) - S [F(y), h]_\mathfrak{h}}_{=~(A3) + (A7)} +  \underbrace{S [S(h), k]_\mathfrak{h} + S [h, S(k)]_\mathfrak{h} - S^2 [h, k]_\mathfrak{h}}_{= ~(A6)} \\
& \quad \quad \underbrace{ + S \big( \chi (N(x), y) +  \chi (x, N(y) ) - S \chi (x, y) \big) + F ([N(x), y]_\mathfrak{g} + [x, N(y)]_\mathfrak{g} - N [x, y]_\mathfrak{g} ) }_{=~ (A2)+(A4)+(A5)+(A9) ~ \text{ continued to the next line}}\\
& \qquad \quad  \underbrace{ + S \big( \psi_{x} F (y) - \psi_{y} F (x) - F [x, y]_\mathfrak{g} \big)}_{ =~ (A2)+(A4)+(A5)+(A9)    } \bigg) \\
&= \bigg( N[N(x),y]_\mathfrak{g} ~ \! ,~ \! S \big( \psi_{N(x)} k - \psi_{y} S(h) - \psi_{y} F (x) + \chi (N(x), y) + [S(h) + F (x), k]_\mathfrak{h}  \big) + F [N(x), y]_\mathfrak{g} \bigg) \\
& \quad + \bigg( N[x, N(y)]_\mathfrak{g} ~ \!, ~ \! S \big( \psi_{x} S(k) + \psi_{x} F (y) - \psi_{N(y)} h + \chi (x, N(y)) + [h, S (k)+ F(y)]_\mathfrak{h} \big) + F [x, N(y)]_\mathfrak{g} \bigg) \\ 
& \quad - \bigg(  N^2 [x, y]_\mathfrak{g} ~ \! , ~ \! S^2 (\psi_x k - \psi_y h + \chi (x, y) + [h, k]_\mathfrak{h}) + SF [x, y]_\mathfrak{g} + FN [x, y]_\mathfrak{g}   \bigg) \\  
&=  U_F \big( [N(x),y]_\mathfrak{g} ~ \!, ~ \! \psi_{N(x)} k - \psi_{y} S(h) - \psi_{y} F (x) + \chi (N(x), y) + [S(h) + F (x), k]_\mathfrak{h})   \big) \\
& \quad + U_F \big( [x, N(y)]_\mathfrak{g} ~ \!, ~ \!  \psi_{x} S(k) + \psi_{x} F (y) - \psi_{N(y)} h + \chi (x, N(y)) + [h, S (k)+ F(y)]_\mathfrak{h}  \big) \\
& \quad - U_F \big( N [x, y]_\mathfrak{g} ~ \! , ~ \!  S \big(  \psi_x k - \psi_y h + \chi (x, y) + [h, k]_\mathfrak{h} \big) + F [x, y]_\mathfrak{g}   \big) \\
&= U_F \big( [ (N(x), S(h) + F(x)), (y, k)]_{\chi, \psi} \big) ~+~ U_F \big( [(x, h), (N(y), S(k) + F (y))]_{\chi, \psi}   \big) - (U_F)^2 [(x, h), (y, k)]_{\chi, \psi} \\
%&= U_F \big(  [U_F ((x, h)), (y, k)]_{\chi, \psi} + [(x, h), U_F ((y,k))]_{\chi, \psi}  \big)\\
&= U_F \big(   [U_F (x, h), (y, k)]_{\chi, \psi} + [(x, h), U_F (y, k)]_{\chi, \psi} - U_F [ (x, h), (y, k)]_{\chi, \psi}   \big).
\end{align*}
This shows that $U_F$ is a Nijenhus operator on the above-constructed Lie algebra which in turn implies that $(\mathfrak{g} \oplus \mathfrak{h}, [~,~]_{\chi, \psi}, U_F)$ is a Nijenhuis Lie algebra. Moreover, this is a non-abelian extension of the Nijenhuis Lie algebra $(\mathfrak{g}, [~,~]_\mathfrak{g}, N)$ by the Nijenhuis Lie algebra $(\mathfrak{h}, [~,~]_\mathfrak{h}, S)$, where the structure maps $i$ and $p$ are respectively the inclusion and projection maps. Next, let $(\chi, \psi, F)$ and $(\chi', \psi', F')$ be two equivalent non-abelian $2$-cocycles. Therefore, there exists a linear map $\varphi : \mathfrak{g} \rightarrow \mathfrak{h}$ such that the identities (\ref{nce1}), (\ref{nce2}) and (\ref{nce3}) are hold. Let $(\mathfrak{g} \oplus \mathfrak{h}, [~,~]_{\chi, \psi}, U_F)$ and $(\mathfrak{g} \oplus \mathfrak{h}, [~,~]_{\chi', \psi'}, U_{F'})$ be the non-abelian extensions induced by the above non-abelian $2$-cocycles, respectively. We now define a linear isomorphism $\Phi : \mathfrak{g} \oplus \mathfrak{h} \rightarrow \mathfrak{g} \oplus \mathfrak{h}$ by
\begin{align*}
\Phi (x, h) = (x , \varphi (x) + h), \text{ for } (x, h) \in \mathfrak{g} \oplus \mathfrak{h}.
\end{align*}
Then we have
\begin{align*}
&\Phi \big(  [(x, h), (y, k)]_{\chi, \psi}  \big) \\
&= \Phi \big(  [x, y]_\mathfrak{g} ~ \!, ~ \! \psi_{x} k - \psi_{y} h + \chi (x,y) + [h,k]_\mathfrak{h}  \big) \\
&= \big( [x,y]_\mathfrak{g} ~ \!, ~ \! \varphi [x,y]_\mathfrak{g} + \psi_{x} k - \psi_{y} h + \chi (x,y) + [h,k]_\mathfrak{h}  \big) \\
&= \big(   [x,y]_\mathfrak{g} ~ \! , ~ \! \cancel{\varphi [x,y]_\mathfrak{g}} + \psi'_{x} k + [\varphi(x), k]_\mathfrak{h} - \psi'_{y} h - [\varphi (y), h]_\mathfrak{h} \\
& \quad \qquad \qquad + \chi' (x,y) + \psi'_{x} \varphi (y) - \psi'_{y} \varphi (x) - \cancel{\varphi [x,y]_\mathfrak{g}} + [\varphi (x), \varphi(y)]_\mathfrak{h} + [h,k]_\mathfrak{h} \big) \\
&= \big( [x,y]_\mathfrak{g} ~ \!, ~ \! \psi'_{x} \varphi (y)    + \psi'_{x} k - \psi'_{y} \varphi (x) - \psi'_{y} h + \chi' (x, y) + [\varphi (x) + h, \varphi (y) + k ]_\mathfrak{h} \big) \quad (\text{after rearranging}) \\
&= [ ( x, \varphi(x) + h),( y, \varphi(y) + k )]_{\chi', \psi'} \\
&= [\Phi (x,h), \Phi (y,k)]_{\chi', \psi'}
\end{align*}
and
\begin{align*}
(U_{F'} \circ \Phi) (x, h) &= U_{F'} (x, \varphi (x) + h ) \\
&= (N(x) ~ \!, ~ \! S (\varphi (x)) + S(h) + F' (x) )\\
&= (N(x) ~ \! , ~ \! \varphi (N(x)) + S (h) + F (x) ) = (\Phi \circ U_F) (x, h),
\end{align*}
for all $(x, h), (y, k) \in \mathfrak{g} \oplus \mathfrak{h}$. This shows that the map $\Phi$ makes the above non-abelian extensions isomorphic. As a consequence, we obtain a well-defined map $\Omega : H^2_{nab} ((\mathfrak{g}, N); (\mathfrak{h}, S)) \rightarrow \mathcal{E}xt_{nab} ((\mathfrak{g}, N); (\mathfrak{h}, S)).$

\medskip

Finally, it is not hard to see that the maps $\Upsilon$ and $\Omega$ are inverses to each other. Hence we obtain the required bijection. 
\end{proof}

\begin{remark}\label{remark-nl-def}
It is well-known that a Nijenhuis Lie algebra $(\mathfrak{g}, [~,~]_\mathfrak{g}, N)$ gives rise to a new Lie algebra structure on the vector space $\mathfrak{g}$ with the bracket
\begin{align*}
[x, y]_N := [N(x), y]_\mathfrak{g} + [x, N (y)]_\mathfrak{g} - N [x, y]_\mathfrak{g}, \text{ for } x, y \in \mathfrak{g}.
\end{align*}
The Lie algebra $(\mathfrak{g}, [~,~]_N)$ is often referred as the {\em deformed Lie algebra} and denoted simply by $\mathfrak{g}_N$. Let $(\mathfrak{g}, [~,~]_\mathfrak{g}, N)$ and $(\mathfrak{h}, [~,~]_\mathfrak{h}, S)$ be two Nijenhuis Lie algebras with the deformed Lie algebras $\mathfrak{g}_N = (\mathfrak{g}, [~,~]_N)$ and $ \mathfrak{h}_S = (\mathfrak{h}, [~,~]_S)$, respectively. Suppose $(\chi, \psi, F)$ is a non-abelian $2$-cocycle of the Nijenhuis Lie algebra $(\mathfrak{g}, [~,~]_\mathfrak{g}, N)$ with values in the Nijenhuis Lie algebra $(\mathfrak{h}, [~,~]_\mathfrak{h}, S)$. Consider the Nijenhuis Lie algebra $(\mathfrak{g} \oplus \mathfrak{h}, [~,~]_{\chi, \psi}, U_F)$ given in Theorem \ref{main-thm-sec3}. If $(\mathfrak{g} \oplus \mathfrak{h})_{U_F} = (\mathfrak{g} \oplus \mathfrak{h}, [~,~]_{U_F})$ is the corresponding deformed Lie algebra then we have
\begin{align}\label{deformed-cocycle}
[(x,  h), (y, k)]_{U_F} :=~& [ U_F (x, h), (y, k)]_{\chi, \psi} + [(x, h), U_F (y, k)]_{\chi, \psi} - U_F [(x, h), (y, k)]_{\chi, \psi} \nonumber \\
=~& \Big(  [N (x), y]_\mathfrak{g} + [x, N (y)]_{\mathfrak{g} } - N [x, y]_\mathfrak{g} ~ \!, ~ \! \psi_{N (x)} k - \psi_y S (h) - \psi_y F (x)  + \chi (N (x), y) \nonumber \\
& ~~~~ + [S(h) , k]_\mathfrak{h} + [F(x), k]_\mathfrak{h} + \psi_x S(k) + \psi_x F (y) - \psi_{N (y)} h + \chi (x, N (y)) + [h, S(k)]_\mathfrak{h} \nonumber \\ & ~~~~ + [h, F (y)]_\mathfrak{h} - S (\psi_x k) + S (\psi_y h) - S \chi (x, y) - S [h, k]_\mathfrak{h} - F [x, y]_\mathfrak{g} \Big) \nonumber \\
=~& \big(  [x, y]_N ~ \! , ~ \! \overline{\psi}_x k - \overline{\psi}_y h + \overline{\chi} (x, y) + [h, k]_S  \big), 
\end{align}
for $(x, h), (y, k) \in \mathfrak{g} \oplus \mathfrak{h}$. Here the maps $\overline{\chi} : \wedge^2 \mathfrak{g} \rightarrow \mathfrak{h}$ and $\overline{\psi} : \mathfrak{g} \rightarrow \mathrm{Der} (\mathfrak{h}_S)$ are respectively given by
\begin{align*}
\overline{\chi} (x, y) :=~& \chi (N(x), y) + \chi (x, N (y)) - S \chi (x, y) + \psi_x F (y) - \psi_y F (x) - F [x, y]_\mathfrak{g}, \\
\overline{\psi}_x (h) :=~&  \psi_{N (x)} h + \psi_{x} S (h) - S (\psi_x h) + [F(x), h]_\mathfrak{h},
\end{align*} for $x, y \in \mathfrak{g}$ and $h \in \mathfrak{h}$. It follows from the expression (\ref{deformed-cocycle}) that the pair $( \overline{\chi}, \overline{\psi})$ is a non-abelian $2$-cocycle of the deformed Lie algebra $\mathfrak{g}_N$ with values in the deformed Lie algebra $\mathfrak{h}_S$. 
%This shows that any non-abelian $2$-cocycle of Nijenhuis Lie algebra can be deformed to get a non-abelian $2$-cocycle of the deformed Lie algebra. 
Next, let $(\chi, \psi, F)$ and $(\chi', \psi', F')$ be two equivalent non-abelian $2$-cocycles of the Nijenhuis Lie algebra $(\mathfrak{g}, [~,~]_\mathfrak{g}, N)$ with values in the Nijenhuis Lie algebra $(\mathfrak{h}, [~,~]_\mathfrak{h}, S)$. That is, there is a linear map $\varphi : \mathfrak{g} \rightarrow \mathfrak{h}$ satisfying the identities (\ref{nce1})-(\ref{nce3}). If $(\overline{\chi}, \overline{\psi})$ and $(\overline{\chi'}, \overline{\psi'})$ are the corresponding non-abelian $2$-cocycles of the deformed Lie algebra $\mathfrak{g}_N$ with values in the deformed algebra $\mathfrak{h}_S$ then we can easily verify that
\begin{align*}
\overline{\psi}_x (h) - \overline{\psi'}_x (h) =~& [\varphi (x), h]_S, \\
\overline{\chi} (x, y) - \overline{\chi'} (x, y) =~& \overline{\psi'}_x (\varphi (y)) - \overline{\psi'}_y (\varphi (x)) - \varphi ([x , y]_N) + [\varphi (x), \varphi (y)]_S. 
\end{align*}
This shows that the non-abelian $2$-cocycles $(\overline{\chi}, \overline{\psi})$ and $(\overline{\chi'}, \overline{\psi'})$ are also equivalent by the map $\varphi$. Hence there is a well-defined map 
\begin{align*}
H^2_{nab} ((\mathfrak{g}, N); (\mathfrak{h}, S)) \rightarrow H^2_{nab} (\mathfrak{g}_N; \mathfrak{h}_S) , ~~ [(\chi, \psi, F)] \mapsto [(\overline{\chi}, \overline{\psi})]
\end{align*}
from the non-abelian cohomology of Nijenhuis Lie algebra to the non-abelian cohomology of the deformed Lie algebra.

\end{remark}

\medskip

\noindent {\bf Particular case: Abelian extensions.}  Here we shall focus on the abelian extensions of a Nijenhuis Lie algebra by a given Nijenhuis representation as a particular case of non-abelian extensions.

Let $(\mathfrak{g}, [~,~]_\mathfrak{g}, N)$ be a Nijenhuis Lie algebra and $(V, \rho, S)$ be a Nijenhuis representation of it. Then a {\bf $2$-cocycle} of the Nijenhuis Lie algebra $(\mathfrak{g}, [~,~]_\mathfrak{g}, N)$ with coefficients in the Nijenhuis representation $(V, \rho, S)$ is given by a pair $(\chi, F)$ of linear maps $\chi : \wedge^2 \mathfrak{g} \rightarrow V$ and $F: \mathfrak{g} \rightarrow V$ satisfying
\begin{align}\label{2co-abel1}
\rho_x \chi (y, z) + \rho_y \chi (z, x) + \rho_z \chi (x, y) - \chi ([x, y]_\mathfrak{g}, z) - \chi ( [y, z]_\mathfrak{g}, x) - \chi ([z, x]_\mathfrak{g}, y) = 0,
\end{align}
\begin{align}\label{2co-abel2}
 & \chi (N (x), N (y)) - S \big( \chi ( N (x), y) + \chi (x, N (y)) - S ( \chi ( x, y) ) \big) - F \big( [N (x), y]_\mathfrak{g} + [x, N (y)]_\mathfrak{g} - N [x, y]_\mathfrak{g}    \big) \\ \medskip \medskip \medskip
& \qquad \qquad \qquad \qquad + \rho_{N(x)} F (y) - \rho_{N (y)} F (x) - S \big( \rho_x F (y) - \rho_y F (x) - F [x, y]_\mathfrak{g} \big) = 0, \nonumber
\end{align}
for all $x, y , z \in \mathfrak{g}$. Let $(\chi, F)$ and $(\chi', F')$ be two $2$-cocycles. They are said to be {\bf cohomologous} if there exists a linear map $\varphi : \mathfrak{g} \rightarrow V$ such that
\begin{align*}
\chi (x, y) - \chi' (x, y) = \rho_x \varphi (y) - \rho_y \varphi (x) - \varphi ([x, y]_\mathfrak{g}) \quad \text{ and } \quad F (x) - F' (x) = S (\varphi (x) )- \varphi (N (x)),
\end{align*}
for all $x, y \in \mathfrak{g}$. We denote the set of all $2$-cocycles modulo the cohomologous relation by the notation $H^2 ((\mathfrak{g}, N); (V, S))$ and call it the {\bf second cohomology group} of the Nijenhuis Lie algebra $(\mathfrak{g}, [~,~]_\mathfrak{g}, N)$ with coefficients in the Nijenhuis representation $(V, \rho, S)$.

\begin{remark}
    Note that a $2$-cocycle of a Nijenhuis Lie algebra with coefficients in a Nijenhuis representation can be regarded as a non-abelian $2$-cocycle by looking at the pair $(V, S)$ as a Nijenhuis Lie algebra with the trivial Lie bracket on $V$. More precisely, a pair $(\chi, F)$ is a $2$-cocycle of the Nijenhuis Lie algebra $(\mathfrak{g}, [~,~]_\mathfrak{g}, N)$ with coefficients in the Nijenhuis representation $(V, \rho, S)$ if and only if the triple $(\chi, \rho, F)$ is a non-abelian $2$-cocycle of the Nijenhuis Lie algebra $(\mathfrak{g}, [~,~]_\mathfrak{g}, N)$ with values in the Nijenhuis Lie algebra $(V, [~,~]_V = 0, S)$.
    \end{remark}

Next, let $(\mathfrak{g}, [~,~]_\mathfrak{g}, N)$ be a Nijenhuis Lie algebra and $(V, S)$ be a pair consisting of a vector space endowed with a linear map. As before, we can regard $(V, S)$ as a Nijenhuis Lie algebra with the trivial Lie bracket on $V$. Then an {\bf abelian extension} of the Nijenhuis Lie algebra $(\mathfrak{g}, [~,~]_\mathfrak{g}, N)$ by the pair $(V, S)$ is a short exact sequence of Nijenhuis Lie algebras:
\begin{align}\label{abel-ext}
\xymatrix{
0 \ar[r] & (V, [~,~]_V = 0, S) \ar[r]^i & (\mathfrak{e}, [~,~]_\mathfrak{e}, U) \ar[r]^p & (\mathfrak{g}, [~,~]_\mathfrak{g}, N) \ar[r] & 0.
}
\end{align}
The notion of isomorphisms between such abelian extensions can be defined similarly. Given an abelian extension (\ref{abel-ext}), choose any section $s$ of the map $p$. As before, we define maps $\chi : \wedge^2 \mathfrak{g} \rightarrow V$, $\rho : \mathfrak{g} \rightarrow \mathrm{End} (V)$ and $F: \mathfrak{g} \rightarrow V$ by
\begin{align*}
\chi (x, y) = [s(x), s(y)]_\mathfrak{e} - s [x, y]_\mathfrak{g}, \qquad \rho_x v = [s(x) , v]_\mathfrak{e} \quad \text{ and } \quad F(x) = (Us - s N)(x),
\end{align*}
for $x, y \in \mathfrak{g}$ and $v \in V$. It can be checked that the map $\rho$ doesn't depend on the choice of $s$. Moreover, since the Lie bracket of $V$ is trivial, it follows from (\ref{nc1}), (\ref{nc3}) that the map $\rho$ makes the triple $(V, \rho, S)$ into a Nijenhuis representation of the Nijenhuis Lie algebra $(\mathfrak{g}, [~,~]_\mathfrak{g}, N)$, called the {\em induced Nijenhuis representation} from the abelian extension. Further, the identities (\ref{nc2}) and (\ref{nc4}) imply that the pair $(\chi, F)$ is a $2$-cocycle of the Nijenhuis Lie algebra $(\mathfrak{g}, [~,~]_\mathfrak{g}, N)$ with coefficients in the Nijenhuis representation $(V, \rho, S)$.
%More generally, we have the following classification result.

\medskip

Let $(\mathfrak{g}, [~,~]_\mathfrak{g}, N)$ be a Nijenhuis Lie algebra and $(V, \rho, S)$ be a given Nijenhuis representation of it. We denote by $\mathcal{E}xt ((\mathfrak{g}, N); (V, S))$ the set of all isomorphism classes of abelian extensions of the Nijenhuis Lie algebra $(\mathfrak{g}, [~,~]_\mathfrak{g}, N)$ by the pair $(V, S)$ so that the induced Nijenhuis representation coincides with the prescribed one. While restricting ourselves to such abelian extensions, Theorem \ref{main-thm-sec3} gives rise to the following result.

\begin{thm}\label{thm-abelian}
Let $(\mathfrak{g}, [~,~]_\mathfrak{g}, N)$ be a Nijenhuis Lie algebra and $(V, \rho, S)$ be a Nijenhuis representation of it. Then there is a bijection
\begin{align*}
\mathcal{E}xt ((\mathfrak{g}, N); (V, S)) ~ \! \cong ~ \! H^2 ((\mathfrak{g}, N) ; (V, S)).
\end{align*}
\end{thm}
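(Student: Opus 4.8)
The plan is to obtain this result as a direct corollary of Theorem \ref{main-thm-sec3} by specializing the target Nijenhuis Lie algebra to $(V, [~,~]_V = 0, S)$. First I would invoke the two remarks preceding the statement: a $2$-cocycle $(\chi, F)$ is exactly a non-abelian $2$-cocycle of the form $(\chi, \rho, F)$, and an abelian extension (\ref{abel-ext}) is a non-abelian extension of $(\mathfrak{g}, N)$ by $(V, [~,~]_V = 0, S)$. Conversely, any non-abelian extension by a trivial-bracket algebra is automatically abelian, since $i$ being a Lie algebra homomorphism forces $[i(u), i(v)]_\mathfrak{e} = i([u,v]_V) = 0$. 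Thus the bijection furnished by the maps $\Upsilon$ and $\Omega$ of Theorem \ref{main-thm-sec3} is already available between the full sets of non-abelian extensions by $(V, S)$ and non-abelian $2$-cocycles with values in $(V, S)$; the only work is to check that it carries the distinguished subsets cut out by the prescribed representation onto each other.

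Next I would observe that fixing the induced Nijenhuis representation to be $\rho$ is precisely the requirement that the middle component $\psi$ of the associated non-abelian cocycle equal $\rho$. On the extension side, the formula $\rho_x v = [s(x), v]_\mathfrak{e}$ is independent of the section $s$ (as noted just after (\ref{abel-ext})), so ``$\psi = \rho$'' is a genuine invariant of the extension, and the subset it selects is exactly $\mathcal{E}xt((\mathfrak{g}, N); (V, S))$. Under $\Upsilon$ this subset lands among cocycles of the form $(\chi, \rho, F)$, while $\Omega$ sends such a cocycle to the extension $(\mathfrak{g} \oplus V, [~,~]_{\chi, \rho}, U_F)$, whose induced representation is manifestly $\rho$. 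Hence $\Upsilon$ and $\Omega$ restrict to mutually inverse maps between $\mathcal{E}xt((\mathfrak{g}, N); (V, S))$ and the set of non-abelian $2$-cocycles with $\psi = \rho$, taken modulo the non-abelian equivalence.

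The key point, and the only genuine verification, is that over cocycles with $\psi = \rho$ the non-abelian equivalence relation collapses to the cohomologous relation defining $H^2$. Indeed, with $[~,~]_V = 0$, equation (\ref{nce1}) reads $\psi_x h - \psi'_x h = [\varphi(x), h]_V = 0$, so any equivalence automatically preserves the middle component and never leaves our subset; and (\ref{nce2}), (\ref{nce3}) reduce respectively to $\chi(x,y) - \chi'(x,y) = \rho_x \varphi(y) - \rho_y \varphi(x) - \varphi([x,y]_\mathfrak{g})$ and $F(x) - F'(x) = S(\varphi(x)) - \varphi(N(x))$, which are exactly the defining conditions for cohomologous $2$-cocycles. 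Therefore the quotient of $\{(\chi, \rho, F)\}$ by non-abelian equivalence is canonically $H^2((\mathfrak{g}, N); (V, S))$, and composing with the restricted bijection yields the claimed correspondence. I expect the main obstacle to be conceptual bookkeeping rather than computation: one must confirm that both directions of the ambient bijection, together with their equivalence relations, are compatible with fixing $\rho$, so that Theorem \ref{main-thm-sec3} descends cleanly to the subsets without any new calculation.
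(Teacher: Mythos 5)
Your proposal is correct and follows exactly the route the paper takes: the paper derives Theorem \ref{thm-abelian} as a direct specialization of Theorem \ref{main-thm-sec3} to the target $(V, [~,~]_V = 0, S)$, using the preceding remarks that identify $2$-cocycles with non-abelian $2$-cocycles of the form $(\chi, \rho, F)$. Your additional verifications (that equivalence preserves $\psi$ because $[\varphi(x), h]_V = 0$, and that the non-abelian equivalence relation collapses to the cohomologous relation) are precisely the bookkeeping the paper leaves implicit.
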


\begin{remark}
    In \cite{agore} Agore and Militaru considered the problem of the extending structure (in short, the ES problem) as a unification of the extension problem and the factorization theorem. The description of extending structures was given by extending datums and the classification of the extending structures problem was answered by a suitable cohomological object. In \cite{peng-zhang} Peng and Zhang recently studied the extending structures problem in the context of Rota-Baxter Lie algebras and introduced the corresponding extending datums and the cohomological object. Since a Rota-Baxter Lie algebra and a Nijenhuis Lie algebra both can be regarded as generalizations of Lie algebra, it is natural to consider the problem of the extending structure for Nijenhuis Lie algebras. We will discuss this more general question and find its applications to the factorization problem in a subsequent work.
\end{remark}

\medskip

\section{Inducibility of a pair of Nijenhuis Lie algebra automorphisms}\label{sec4}
This section studies the inducibility problem of a pair of Nijenhuis Lie algebra automorphisms in a given non-abelian extension. Among others, we define the Wells map and construct the Wells exact sequence in this context. Finally, we end this section by considering the same inducibility problem in a given abelian extension of a Nijenhuis Lie algebra by a Nijenhuis representation.

\medskip

Let $(\mathfrak{g}, [~,~]_\mathfrak{g}, N)$ and $(\mathfrak{h}, [~,~]_\mathfrak{h}, S)$ be two Nijenhuis Lie algebras. Suppose
\begin{align}\label{ext-ind-aut}
\xymatrix{
0 \ar[r] & (\mathfrak{h}, [~,~]_\mathfrak{h}, S) \ar[r]^i & (\mathfrak{e}, [~,~]_\mathfrak{e}, U) \ar[r]^p & (\mathfrak{g}, [~,~]_\mathfrak{g}, N) \ar[r] & 0
}
\end{align}
is a given non-abelian extension of Nijenhuis Lie algebras. Let $\mathrm{Aut}_\mathfrak{h} (\mathfrak{e}, U)$ be the group of all Nijenhuis Lie algebra automorphisms $\gamma \in \mathrm{Aut} (\mathfrak{e}, U)$ for which $\mathfrak{h}$ is an invariant subspace (i.e. $\gamma (\mathfrak{h}) \subset \mathfrak{h}$). For $\gamma \in \mathrm{Aut}_\mathfrak{h} (\mathfrak{e}, U)$, we have $\gamma \big|_\mathfrak{h} \in \mathrm{Aut} (\mathfrak{h}, S)$. By choosing any section $s : \mathfrak{g} \rightarrow \mathfrak{e}$ of the map $p$, we also define a map $\overline{\gamma} : \mathfrak{g} \rightarrow \mathfrak{g}$ by $\overline{\gamma} (x) = (p \gamma s) (x)$, for $x \in \mathfrak{g}$. 
It is easy to see that the map $\overline{\gamma}$ is independent of the choice of the section $s$. Further, $\overline{\gamma}$ is a bijection. For any $x, y \in \mathfrak{g}$, we also observe that
\begin{align*}
\overline{\gamma} ( [x, y]_\mathfrak{g}) = p \gamma (s [x,y]_\mathfrak{g}) =~& p \gamma ( [s(x), s(y)]_\mathfrak{e} - \chi (x,y) ) \\
=~& p \gamma ( [s(x), s(y)]_\mathfrak{e} ) \quad (\text{as } \gamma (\mathfrak{h}) \subset \mathfrak{h} \text{ and } p|_\mathfrak{h} = 0) \\
=~& [p \gamma s (x), p \gamma s (y)]_\mathfrak{g} 
= [ \overline{\gamma} (x) , \overline{\gamma} (y)]_\mathfrak{g}
\end{align*}
and
\begin{align}\label{commute-n}
(N \overline{\gamma} - \overline{\gamma} N) (x) =~& (N p \gamma s - p \gamma s N)(x) \\
=~& (p U \gamma s - p \gamma s N)(x) \quad (\text{as } Np = pU) \nonumber \\
=~& p \gamma (Us - sN)(x) \quad (\text{as } U \gamma = \gamma U) \nonumber \\
=~& 0 \quad (\text{as } (Us -sN)(x) \in \mathfrak{h}, ~ \gamma (\mathfrak{h}) \subset \mathfrak{h} \text{ and } p|_\mathfrak{h} = 0). \nonumber
\end{align}
This shows that $\overline{\gamma} \in \mathrm{Aut} (\mathfrak{g}, N)$ is a Nijenhuis Lie algebra automorphism. As a result, we obtain a group homomorphism
\begin{align*}
\tau : \mathrm{Aut}_\mathfrak{h} (\mathfrak{e}, U) \rightarrow \mathrm{Aut} (\mathfrak{h}, S) \times \mathrm{Aut} (\mathfrak{g}, N) ~~~~ \text{ given by } ~~~~ \tau (\gamma) = (\gamma \big|_\mathfrak{h}, \overline{\gamma}).
\end{align*}
In this case, the pair $ (\gamma \big|_\mathfrak{h}, \overline{\gamma})$ is said to be induced by $\gamma$. Keeping this in mind, we say that a pair $(\beta, \alpha) \in \mathrm{Aut} (\mathfrak{h}, S) \times \mathrm{Aut} (\mathfrak{g}, N)$ of Nijenhuis Lie algebra automorphisms is {\bf inducible} if it lies in the image of the map $\tau$, i.e. there must exists a Nijenhuis Lie algebra automorphism $\gamma \in \mathrm{Aut}_\mathfrak{h} (\mathfrak{e}, U)$ such that $\gamma \big|_\mathfrak{h} = \beta$ and $\overline{\gamma} = \alpha$.

Our primary aim in this section is to find a necessary and sufficient condition for the inducibility of a given pair $(\beta, \alpha) \in \mathrm{Aut} (\mathfrak{h}, S) \times \mathrm{Aut} (\mathfrak{g}, N)$ of Nijenhuis Lie algebra automorphisms. We begin with the following result.

\begin{prop}\label{first-prop}
Let (\ref{ext-ind-aut}) be a non-abelian extension of Nijenhuis Lie algebras. For a section $s$, suppose the extension corresponds to the non-abelian $2$-cocycle $(\chi, \psi, F)$. Then a pair $(\beta, \alpha) \in \mathrm{Aut} (\mathfrak{h}, S) \times \mathrm{Aut} (\mathfrak{g}, N)$ of Nijenhuis Lie algebra automorphisms is inducible if and only if there exists a linear map $\lambda: \mathfrak{g} \rightarrow \mathfrak{h}$ satisfying the following conditions:
\begin{align}
\beta (\psi_x h) - \psi_{\alpha (x) } \beta (h) =~& [\lambda (x), \beta (h)]_\mathfrak{h}, \label{ab1}\\
\beta (\chi (x, y) ) - \chi (\alpha (x), \alpha (y)) =~& \psi_{\alpha (x)} \lambda (y) - \psi_{\alpha (y)} \lambda (x) - \lambda ([x, y]_\mathfrak{g}) + [ \lambda (x), \lambda (y)]_\mathfrak{h}, \label{ab2}\\
\beta (F (x)) - F (\alpha (x)) =~& S (\lambda (x)) - \lambda ( N (x)), \text{ for all } x, y \in \mathfrak{g} \text{ and } h \in \mathfrak{h}. \label{ab3}
\end{align}
\end{prop}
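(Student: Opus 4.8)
The plan is to exploit the section $s$ to trivialize the extension as a vector space and then translate the existence of an inducing automorphism into the existence of the map $\lambda$. Using $s$, I would identify $\mathfrak{e} \cong \mathfrak{g} \oplus \mathfrak{h}$ via $(x, h) \mapsto s(x) + h$; under this identification the bracket of $\mathfrak{e}$ becomes $[~,~]_{\chi, \psi}$ and the Nijenhuis operator $U$ becomes $U_F$, exactly as in the proof of Theorem \ref{main-thm-sec3}. The bridge between an inducing automorphism $\gamma$ and the map $\lambda$ will always be the relation $\lambda(x) = \gamma(s(x)) - s(\alpha(x))$, whose image lies in $\mathfrak{h}$ because $p \gamma s = \overline{\gamma} = \alpha = p s \alpha$, so that $p \lambda = 0$. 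I would prove the two implications separately, constructing $\lambda$ from $\gamma$ in one direction and $\gamma$ from $\lambda$ in the other.

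For the necessity, I would assume $(\beta, \alpha)$ is inducible by some $\gamma \in \mathrm{Aut}_\mathfrak{h}(\mathfrak{e}, U)$ with $\gamma|_\mathfrak{h} = \beta$ and $\overline{\gamma} = \alpha$, set $\lambda(x) = \gamma(s(x)) - s(\alpha(x))$, and apply $\gamma$ to the three defining formulas $\psi_x h = [s(x), h]_\mathfrak{e}$, $\chi(x, y) = [s(x), s(y)]_\mathfrak{e} - s[x, y]_\mathfrak{g}$ and $F(x) = (Us - sN)(x)$. Substituting $\gamma s = s\alpha + \lambda$ and using that $\gamma$ is a Lie algebra homomorphism with $\gamma|_\mathfrak{h} = \beta$, expanding $[s(x), h]_\mathfrak{e}$ yields (\ref{ab1}) and $[s(x), s(y)]_\mathfrak{e}$ yields (\ref{ab2}) after collecting the $\mathfrak{h}$-valued terms. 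For (\ref{ab3}), I would apply $\gamma$ to $F(x) = (Us - sN)(x)$ and push $\gamma$ through $U$ via $U\gamma = \gamma U$, using $U|_\mathfrak{h} = S$, $N\alpha = \alpha N$ and the defining relation of $\lambda$ to isolate $\beta(F(x)) - F(\alpha(x)) = S(\lambda(x)) - \lambda(N(x))$.

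For the sufficiency, given $\lambda$ satisfying (\ref{ab1})--(\ref{ab3}), I would define $\gamma : \mathfrak{e} \to \mathfrak{e}$ by $\gamma(s(x) + h) = s(\alpha(x)) + \lambda(x) + \beta(h)$, i.e. $\gamma(x, h) = (\alpha(x), \lambda(x) + \beta(h))$ in the coordinates above. Its lower-triangular shape makes it a linear bijection with $\gamma(\mathfrak{h}) = \mathfrak{h}$, $\gamma|_\mathfrak{h} = \beta$ and $p \gamma s = \alpha$, so it only remains to check that $\gamma$ is a homomorphism of Nijenhuis Lie algebras. Comparing the second components of $\gamma([(x, h), (y, k)]_{\chi, \psi})$ and $[\gamma(x, h), \gamma(y, k)]_{\chi, \psi}$, the terms involving $\beta \psi$ are reconciled by (\ref{ab1}), those involving $\beta \chi$ by (\ref{ab2}), and the $\lambda[x,y]_\mathfrak{g}$ contributions cancel; the commutation $U_F \gamma = \gamma U_F$ reduces on second components, after using $N\alpha = \alpha N$ and $S\beta = \beta S$, precisely to (\ref{ab3}).

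The computations are essentially bookkeeping, so the main obstacle is organizational rather than conceptual: one must keep track of which of the three identities is forced by which piece of the homomorphism condition and verify that nothing is left over. The genuinely new feature compared with the classical Lie algebra inducibility result is the Nijenhuis compatibility, which is captured entirely by (\ref{ab3}); the care here lies in correctly propagating the operators $N, S, U$ through the computation using $N\alpha = \alpha N$, $S\beta = \beta S$, $pU = Np$ and $U|_\mathfrak{h} = S$, and in confirming that (\ref{ab1}), (\ref{ab2}) and (\ref{ab3}) are exactly the obstructions coming from the $\psi$-part, the $\chi$-part and the Nijenhuis part, with no further constraints.
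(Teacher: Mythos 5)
Your proposal is correct and follows essentially the same route as the paper: the same bridge $\lambda = \gamma s - s\alpha$ (valued in $\mathfrak{h}$ since it lies in $\ker p$) for necessity, and the same formula $\gamma(s(x)+h) = s(\alpha(x)) + \beta(h) + \lambda(x)$ for sufficiency, with (\ref{ab1})--(\ref{ab2}) handling the Lie algebra homomorphism condition and (\ref{ab3}) handling the commutation with the Nijenhuis operators exactly as in the paper's computation (\ref{compos}).
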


\begin{proof}
Let $(\beta, \alpha)$ be an inducible pair. That is, there exists a Nijenhuis Lie algebra automorphism $\gamma \in \mathrm{Aut}_\mathfrak{h} (\mathfrak{e}, U)$ such that $\gamma \big|_\mathfrak{h} = \beta$ and $p \gamma s = \alpha$. For any $ x \in \mathfrak{g}$, we observe that $(\gamma s - s \alpha) (x) \in \mathrm{ker} (p) $ which in turn implies that $(\gamma s - s \alpha) (x) \in \mathfrak{h}$. We define a map $\lambda : \mathfrak{g} \rightarrow \mathfrak{h}$ by
\begin{align*}
\lambda (x) := (\gamma s - s \alpha) (x), \text{ for } x \in \mathfrak{g}.
\end{align*} 
Then we observe that
\begin{align*}
\beta (\psi_x h) - \psi_{\alpha (x)} \beta (h) =~& \beta ( [s(x), h]_\mathfrak{e}) - [s \alpha (x), \beta (h)]_\mathfrak{e} \\
=~& [\gamma s (x), \gamma (h)]_\mathfrak{e} - [s \alpha (x), \beta (h)]_\mathfrak{e} \quad (\text{as } \beta = \gamma|_\mathfrak{h})\\
=~& [\gamma s (x), \beta (h)]_\mathfrak{e} - [s \alpha (x), \beta (h)]_\mathfrak{e}
= [\lambda (x), \beta (h)]_\mathfrak{h},
\end{align*}
for $x \in \mathfrak{g}$ and $h \in \mathfrak{h}$. Similarly, for any $x, y \in \mathfrak{g}$, we get that
\begin{align*}
&\psi_{\alpha (x)} \lambda (y) - \psi_{\alpha (y)} \lambda (x) - \lambda ([x, y]_\mathfrak{g}) + [ \lambda (x), \lambda (y)]_\mathfrak{h} \\
&= [s \alpha (x) , (\gamma s - s \alpha)(y)]_\mathfrak{e} - [s \alpha (y), (\gamma s - s \alpha)(x)]_\mathfrak{e} - (\gamma s - s \alpha)([x, y]_\mathfrak{g}) + [(\gamma s - s \alpha)(x), (\gamma s - s \alpha)(y)]_\mathfrak{h} \\
&= [s \alpha (x), \gamma s (y)]_\mathfrak{e} - [s \alpha (x) , s \alpha (y)]_\mathfrak{e} - [s \alpha (y), \gamma s (x)]_\mathfrak{e} + [s \alpha (y), s \alpha(x)]_\mathfrak{e} - \gamma s ([x, y]_\mathfrak{g}) + s [\alpha (x), \alpha(y)]_\mathfrak{g} \\
& \qquad + [\gamma s (x), \gamma s (y)]_\mathfrak{e} - [\gamma s (x), s \alpha (y)]_\mathfrak{e} - [s \alpha (x), \gamma s (y)]_\mathfrak{e} + [s \alpha (x), s \alpha(y)]_\mathfrak{e} \\
&= \big( [\gamma s (x), \gamma s (y)]_\mathfrak{e} - \gamma s ([x, y]_\mathfrak{g}) \big) + [s \alpha (y), s \alpha (x)]_\mathfrak{e} + s [\alpha(x), \alpha (y)]_\mathfrak{g} \quad (\text{after cancellation and rearranging}) \\
&= \gamma \big(  [s(x), s(y)]_\mathfrak{e} - s [x, y]_\mathfrak{g} \big)  - \big( [s\alpha (x) , s \alpha(y)]_\mathfrak{e} - s [\alpha (x), \alpha(y)]_\mathfrak{g}  \big) \\
&= \beta \big(  [s(x), s(y)]_\mathfrak{e} - s [x, y]_\mathfrak{g} \big) - \big( [s\alpha (x) , s \alpha(y)]_\mathfrak{e} - s [\alpha (x), \alpha(y)]_\mathfrak{g}  \big) \\
&= \beta (\chi (x, y)) - \chi (\alpha (x), \alpha (y))
\end{align*}
and also
\begin{align*}
\beta (F(x)) - F (\alpha (x)) 
&= \beta \big( U(s(x)) - s (N(x)) \big) - \big( U (s \alpha (x)) - s (N \alpha (x)) \big) \\
&= \gamma \big( U(s(x)) - s (N(x)) \big) -  U (s \alpha (x)) + s (N \alpha (x)) \quad (\text{as } \beta = \gamma|_\mathfrak{h}) \\
&= U \gamma (s(x)) - \gamma s (N(x)) - U (s \alpha (x)) + s ( \alpha N (x))  \quad (\text{as } U \gamma = \gamma U \text{ and } N \alpha = \alpha N) \\
&= U ((\gamma s - s \alpha)(x)) - (\gamma s - s \alpha)(N(x)) \\
&=U(\lambda(x)) - \lambda (N(x)) = S (\lambda (x)) - \lambda (N (x)).
\end{align*}
Hence the one direction of the proof follows. Conversely, suppose a linear map $\lambda : \mathfrak{g} \rightarrow \mathfrak{h}$ exists which satisfy the identities (\ref{ab1}), (\ref{ab2}) and (\ref{ab3}). First observe that, using the section $s$, the space $\mathfrak{e}$ can be identified with $s (\mathfrak{g}) \oplus \mathfrak{h}$. Equivalently, any element $e \in \mathfrak{e}$ can be uniquely written as $e = s(x) + h$, for some $x \in \mathfrak{g}$ and $h \in \mathfrak{h}$. Using this identification, we define a map $\gamma : \mathfrak{e} \rightarrow \mathfrak{e}$ by
\begin{align*}
\gamma (e) = \gamma ( s(x) + h) = s (\alpha (x)) + (\beta (h) + \lambda (x)), \text{ for } e = s(x) + h \in \mathfrak{e}.
\end{align*}
Since $s, \alpha, \beta$ are all injections, we can easily verify that $\gamma$ is so. The maps $\alpha$ and $\beta$ are also surjections which imply that $\gamma$ is also a surjection. Thus, $\gamma$ becomes a bijective map. By using the conditions (\ref{ab1}) and (\ref{ab2}), it has been shown in \cite{fre,das-hazra-mishra} that $\gamma : \mathfrak{e} \rightarrow \mathfrak{e}$ is a Lie algebra homomorphism. Moreover, for any $e = s(x) + h \in \mathfrak{e}$, we observe that
\begin{align}\label{compos}
(\gamma \circ U) (e)
&= (\gamma \circ U) ( s(x) + h) \\
&= \gamma \big(   Us (x)  + S(h)  \big) \nonumber \\
&= \gamma \big(  s N(x) + S(h) + F (x) \big) \quad (\text{since } F = Us -sN) \nonumber \\
&= s (\alpha N(x) ) + \beta (S(h) + F (x)) + \lambda (N(x)) \nonumber \\
&=  s (N \alpha (x)) + \beta S (h) + \beta F (x) + \lambda (N(x)) \nonumber  \\
&=  U (s \alpha (x)) - F (\alpha (x))  + \beta S (h) + \beta F (x) + \lambda (N(x)) \qquad (\text{since } F = Us -sN) \nonumber \\
&= U (s \alpha (x)) + S \beta (h) + \big( \beta F (x) + \lambda (N(x)) - F (\alpha (x)) \big) \qquad (\text{since } \beta S = S \beta) \nonumber  \\
&=  U (s \alpha (x)) + S \beta (h) + S (\lambda (x)) \qquad (\text{by } (\ref{ab3})) \nonumber  \\
&= U \big(  s (\alpha (x))+  \beta (h) + \lambda (x)  \big) \qquad (\text{as } U|_\mathfrak{h} = S) \nonumber  \\
&= (U \circ \gamma) (e). \nonumber 
\end{align}
Finally, it follows from the definition of $\gamma$ that $\gamma (\mathfrak{h}) \subset \mathfrak{h}$. Thus, $\gamma \in \mathrm{Aut}_\mathfrak{h} (\mathfrak{e}, U)$. Additionally, it is easy to see that $\gamma \big|_\mathfrak{h} = \beta$ and $\overline{\gamma } = p \gamma s = \alpha$. This shows that the pair $(\beta, \alpha)$ is inducible.
\end{proof}

It is important to observe that the necessary and sufficient condition obtained in the previous proposition depends on the section $s$. However, the best result would be to find a condition which is independent of any section. For this, we will adapt the method of Wells (who developed for abstract groups) \cite{wells} in the context of Nijenhuis Lie algebras.

For a fixed section $s$, suppose the given non-abelian extension corresponds to the non-abelian $2$-cocycle $(\chi, \psi, F)$. Let $(\beta, \alpha) \in \mathrm{Aut} (\mathfrak{h}, S) \times \mathrm{Aut} (\mathfrak{g}, N)$ be a given pair of Nijenhuis Lie algebra automorphisms. We define a triple $(\chi_{(\beta, \alpha)}, \psi_{(\beta, \alpha)}, F_{(\beta, \alpha)})$ of linear maps $\chi_{(\beta, \alpha)} : \wedge^2 \mathfrak{g} \rightarrow \mathfrak{h}$, $\psi_{(\beta, \alpha)} : \mathfrak{g} \rightarrow \mathrm{Der} (\mathfrak{h})$ and $F_{(\beta, \alpha)} : \mathfrak{g} \rightarrow \mathfrak{h}$ by
\medskip
\begin{align*}
\chi_{(\beta, \alpha)} (x, y) := \beta \chi (\alpha^{-1} (x), \alpha^{-1} (y)), \quad (\psi_{(\beta, \alpha)})_x h := \beta ( \psi_{\alpha^{-1} (x)} \beta^{-1} (h)) ~~~~ \text{ and } ~~~~ F_{(\beta, \alpha)} (x) := \beta ( F (\alpha^{-1} (x))),
\end{align*}
\medskip 
for $x, y \in \mathfrak{g}$ and $h \in \mathfrak{h}$. Then we have the following result.

\begin{prop}
The triple $(\chi_{(\beta, \alpha)}, \psi_{(\beta, \alpha)}, F_{(\beta, \alpha)})$ is also a non-abelian $2$-cocycle.
\end{prop}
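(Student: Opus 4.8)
The plan is to verify directly that the transported triple satisfies the four defining identities (\ref{nc1})--(\ref{nc4}), substituting the definitions of $\chi_{(\beta,\alpha)}$, $\psi_{(\beta,\alpha)}$, $F_{(\beta,\alpha)}$ and collapsing each expression back to the corresponding identity for the original cocycle $(\chi,\psi,F)$. The only structural inputs needed are that $\alpha \in \mathrm{Aut}(\mathfrak{g}, N)$ and $\beta \in \mathrm{Aut}(\mathfrak{h}, S)$ are Nijenhuis Lie algebra automorphisms; consequently $\alpha^{-1}$ is a Lie algebra homomorphism of $\mathfrak{g}$ with $N \alpha^{-1} = \alpha^{-1} N$, while $\beta$ and $\beta^{-1}$ are Lie algebra homomorphisms of $\mathfrak{h}$ with $S \beta = \beta S$ and $S \beta^{-1} = \beta^{-1} S$. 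Throughout I would abbreviate $x' := \alpha^{-1}(x)$, $y' := \alpha^{-1}(y)$, $z' := \alpha^{-1}(z)$ and $h' := \beta^{-1}(h)$.

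For the first two identities, note that every $\beta^{-1}$ produced by $\psi_{(\beta,\alpha)}$ is immediately absorbed by an adjacent $\beta$, so that, for instance, $(\psi_{(\beta,\alpha)})_x (\psi_{(\beta,\alpha)})_y(h) = \beta(\psi_{x'} \psi_{y'} (h'))$ and $\chi_{(\beta,\alpha)}([x,y]_\mathfrak{g}, z) = \beta\, \chi([x',y']_\mathfrak{g}, z')$, using that $\alpha^{-1}$ preserves the bracket. Substituting into the left-hand side of (\ref{nc1}) (resp. (\ref{nc2})) factors it as $\beta$ applied to the left-hand side of the same identity evaluated at the primed arguments; since $\beta$ is a Lie algebra homomorphism, the term $[\chi(x',y'), h']_\mathfrak{h}$ in (\ref{nc1}) comes out correctly as $[\chi_{(\beta,\alpha)}(x,y), h]_\mathfrak{h}$. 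Thus (\ref{nc1}) and (\ref{nc2}) for the new triple follow at once from (\ref{nc1}) and (\ref{nc2}) for $(\chi,\psi,F)$.

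For (\ref{nc3}) and (\ref{nc4}) the same mechanism applies, now invoking in addition the commutation relations $N\alpha^{-1} = \alpha^{-1} N$ and $S\beta = \beta S$ (together with their inverses) to move $N$ and $S$ past $\alpha^{-1}$ and $\beta$. A short computation shows each term of the left-hand side of (\ref{nc3}) equals $\beta$ of the corresponding term of (\ref{nc3}) at $(x', h')$; for instance $(\psi_{(\beta,\alpha)})_{N(x)} S(h) = \beta(\psi_{N(x')} S(h'))$ and $S[F_{(\beta,\alpha)}(x), h]_\mathfrak{h} = \beta\, S[F(x'), h']_\mathfrak{h}$, so the whole expression becomes $\beta$ applied to the left-hand side of (\ref{nc3}) at $(x',h')$, which vanishes. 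The identity (\ref{nc4}) is handled identically: after rewriting each of its summands, the combination $[N(x),y]_\mathfrak{g} + [x,N(y)]_\mathfrak{g} - N[x,y]_\mathfrak{g}$ inside $F_{(\beta,\alpha)}$ transforms under $\alpha^{-1}$ into the analogous combination of primed elements (again by $N\alpha^{-1} = \alpha^{-1}N$ and the homomorphism property), so the entire left-hand side collapses to $\beta$ applied to the left-hand side of (\ref{nc4}) at $(x', y')$, hence to $\beta(0) = 0$. The only real obstacle is the bookkeeping in (\ref{nc4}), the longest of the four identities; since no new idea beyond the systematic ``$\beta(\cdots)$ at $\alpha^{-1}$'' transport is required, I would present (\ref{nc1})--(\ref{nc3}) in detail and indicate that (\ref{nc4}) follows by the same term-by-term reduction.
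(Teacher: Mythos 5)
Your proposal is correct and is exactly the paper's argument: the paper likewise proves this by substituting $\alpha^{-1}(x), \alpha^{-1}(y), \alpha^{-1}(z), \beta^{-1}(h)$ into (\ref{nc1})--(\ref{nc4}) and using the definitions of the transported maps, with your version merely making explicit the role of the commutation relations $N\alpha^{-1}=\alpha^{-1}N$ and $S\beta=\beta S$ and the homomorphism properties. No gap.
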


\begin{proof}
Since $(\chi, \psi, F)$ is a non-abelian $2$-cocycle, the identities in (\ref{nc1})-(\ref{nc4}) are hold. In this identities, if we replace $x, y, z , h$ respectively by $\alpha^{-1} (x), \alpha^{-1} (y), \alpha^{-1} (z) , \beta^{-1} (h)$ and use the definitions of $\chi_{(\beta, \alpha)}$, $\psi_{(\beta, \alpha)}$ and $F_{(\beta, \alpha)}$, we obtain the corresponding non-abelian $2$-cocycle identities for the triple $(\chi_{(\beta, \alpha)}, \psi_{(\beta, \alpha)}, F_{(\beta, \alpha)})$. This shows the desired result.
\end{proof}

Note that the non-abelian $2$-cocycle $(\chi_{(\beta, \alpha)}, \psi_{(\beta, \alpha)}, F_{(\beta, \alpha)})$ obtained in the above proposition also depends on the section $s$. However, we have the following important result.

\begin{prop}\label{prop-a}
The equivalence class of the non-abelian $2$-cocycle $(\chi_{(\beta, \alpha)}, \psi_{(\beta, \alpha)}, F_{(\beta, \alpha)}) - (\chi, \psi, F)$ doesn't depend on the choice of the section $s$.
\end{prop}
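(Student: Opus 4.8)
The plan is to compare the two non-abelian $2$-cocycles produced from two different sections and to exhibit an explicit equivalence between the associated difference cocycles. Fix two sections $s, \widetilde{s} : \mathfrak{g} \to \mathfrak{e}$ of the map $p$, and let $(\chi, \psi, F)$ and $(\widetilde{\chi}, \widetilde{\psi}, \widetilde{F})$ be the non-abelian $2$-cocycles they determine. Exactly as in the proof of Theorem \ref{main-thm-sec3}, the linear map $\varphi_0 := s - \widetilde{s} : \mathfrak{g} \to \mathfrak{h}$ witnesses the equivalence of $(\chi, \psi, F)$ and $(\widetilde{\chi}, \widetilde{\psi}, \widetilde{F})$; that is, the identities (\ref{nce1})--(\ref{nce3}) hold with $\varphi = \varphi_0$. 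The goal is to upgrade this into an equivalence between the two difference cocycles attached to $s$ and to $\widetilde{s}$.

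First I would record how the transformed cocycle reacts to a change of section. Using the defining formulas $\chi_{(\beta, \alpha)}(x,y) = \beta \chi(\alpha^{-1}(x), \alpha^{-1}(y))$, $(\psi_{(\beta, \alpha)})_x h = \beta(\psi_{\alpha^{-1}(x)} \beta^{-1}(h))$ and $F_{(\beta, \alpha)}(x) = \beta(F(\alpha^{-1}(x)))$, I claim that $(\chi_{(\beta, \alpha)}, \psi_{(\beta, \alpha)}, F_{(\beta, \alpha)})$ and $(\widetilde{\chi}_{(\beta, \alpha)}, \widetilde{\psi}_{(\beta, \alpha)}, \widetilde{F}_{(\beta, \alpha)})$ are equivalent via the linear map $\varphi_0' := \beta \circ \varphi_0 \circ \alpha^{-1} : \mathfrak{g} \to \mathfrak{h}$. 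This is a direct substitution: replacing $x, y, h$ by $\alpha^{-1}(x), \alpha^{-1}(y), \beta^{-1}(h)$ in the identities (\ref{nce1})--(\ref{nce3}) for $\varphi_0$ and then applying $\beta$ recovers the corresponding identities for $\varphi_0'$. The only inputs needed are that $\alpha$ and $\beta$ are Lie algebra automorphisms (so they pass through $[~,~]_\mathfrak{g}$ and $[~,~]_\mathfrak{h}$), that $\alpha N = N\alpha$ forces $\alpha^{-1} N = N \alpha^{-1}$, and that $\beta S = S\beta$; these are precisely what convert each $\varphi_0$-identity into the corresponding $\varphi_0'$-identity term by term.

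With both constituent equivalences available, I would assemble the equivalence of the differences. Writing $\Lambda := \varphi_0' - \varphi_0$, the $\psi$- and $F$-components behave additively: subtracting (\ref{nce1}) for $\varphi_0'$ from (\ref{nce1}) for $\varphi_0$ shows that the two difference cocycles' $\psi$-components differ by the inner derivation $[\Lambda(x), -]_\mathfrak{h}$, and subtracting the two copies of (\ref{nce3}) shows their $F$-components differ by $S(\Lambda(x)) - \Lambda(N(x))$; these are exactly (\ref{nce1}) and (\ref{nce3}) for the map $\Lambda$. It then remains to verify (\ref{nce2}) for the $\chi$-component with $\Lambda$, after which the two difference cocycles represent the same class in $H^2_\mathrm{nab}((\mathfrak{g}, N); (\mathfrak{h}, S))$, as asserted.

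The hard part will be that $\chi$-component, since (\ref{nce2}) is not additive in the cochain: it carries the quadratic term $[\varphi(x), \varphi(y)]_\mathfrak{h}$ and the mixed terms $\psi'_x(\varphi(y))$. I would expand $[\Lambda(x), \Lambda(y)]_\mathfrak{h} = [\varphi_0'(x), \varphi_0'(y)]_\mathfrak{h} - [\varphi_0'(x), \varphi_0(y)]_\mathfrak{h} - [\varphi_0(x), \varphi_0'(y)]_\mathfrak{h} + [\varphi_0(x), \varphi_0(y)]_\mathfrak{h}$ by bilinearity and use (\ref{nce1}) to trade the mixed inner-derivation terms against the appropriate $\psi$-actions, so that the cross terms cancel and (\ref{nce2}) for $\Lambda$ collapses to the difference of the two already-established copies of (\ref{nce2}). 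Alternatively, and more cleanly, provided the difference operation is well defined on equivalence classes, one may simply note that Theorem \ref{main-thm-sec3} already makes $[(\chi, \psi, F)]$ section-independent while the second paragraph makes $[(\chi_{(\beta, \alpha)}, \psi_{(\beta, \alpha)}, F_{(\beta, \alpha)})]$ section-independent; the class of their difference then depends only on these two fixed classes and hence not on $s$, bypassing the bracket bookkeeping entirely.
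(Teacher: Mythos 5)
Your proposal is correct and follows essentially the paper's own route: the paper likewise sets $\varphi = s - \widetilde{s}$, shows that $(\chi_{(\beta,\alpha)},\psi_{(\beta,\alpha)},F_{(\beta,\alpha)})$ and its tilde counterpart are equivalent via $\beta\varphi\alpha^{-1}$, and then asserts that the two difference cocycles are equivalent via $\beta\varphi\alpha^{-1}-\varphi$. Your caution about the non-additive term $[\varphi(x),\varphi(y)]_{\mathfrak{h}}$ in (\ref{nce2}) is warranted --- the paper passes over that last assembly step without comment --- and your fallback observation (that the section-independence of each equivalence class separately is all that is actually used later, e.g.\ in Theorem \ref{thm-ind-aut}) is the cleanest way to close it.
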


\begin{proof}
Let $\widetilde{s}$ be any other section of the map $p$. If $(\widetilde{\chi}, \widetilde{\psi}, \widetilde{F})$ is the corresponding non-abelian $2$-cocycle produced from the same extension then we have seen earlier that the non-abelian $2$-cocycles $(\chi, \psi, F)$ and $(\widetilde{\chi}, \widetilde{\psi}, \widetilde{F})$ are equivalent, and an equivalence is given by the map $\varphi:= s - \widetilde{s}$. Moreover, for any $x \in \mathfrak{g}$ and $h \in \mathfrak{h}$, we observe that
\begin{align*}
(\psi_{(\beta, \alpha)})_x (h) - (\widetilde{\psi}_{(\beta, \alpha)})_x (h) =~& \beta (\psi_{\alpha^{-1} (x)} \beta^{-1}(h)) - \beta (\widetilde{\psi}_{\alpha^{-1} (x)} \beta^{-1}(h)) \\
=~& \beta [  \varphi \alpha^{-1} (x), \beta^{-1}(h)]_\mathfrak{h} = [\beta \varphi \alpha^{-1} (x), h]_\mathfrak{h}.
\end{align*}
Similarly, by straightforward calculations, we have
\begin{align*}
\chi_{(\beta, \alpha)} (x,y) - \widetilde{\chi}_{(\beta, \alpha)} (x,y) =~& (\widetilde{\psi}_{(\beta, \alpha)})_x (\beta \varphi \alpha^{-1} (y)) - (\widetilde{\psi}_{(\beta, \alpha)})_y (\beta \varphi \alpha^{-1} (x)) \\
~& \qquad \qquad \qquad - \beta \varphi \alpha^{-1} ([x,y]_\mathfrak{g}) + [\beta \varphi \alpha^{-1} (x), \beta \varphi \alpha^{-1} (y)]_\mathfrak{h},
\end{align*}
\begin{align*}
F_{(\beta, \alpha)} (x) - \widetilde{F}_{(\beta, \alpha)} (x) = S \big( \beta \varphi \alpha^{-1} (x)  \big) - \beta \varphi \alpha^{-1} (N(x)),
\end{align*}
for $x, y \in \mathfrak{g}$. This shows that $(\chi_{(\beta, \alpha)}, \psi_{(\beta, \alpha)}, F_{(\beta, \alpha)})$ and $( \widetilde{\chi}_{(\beta, \alpha)}, \widetilde{\psi}_{(\beta, \alpha)}, \widetilde{F}_{(\beta, \alpha)})$ are also equivalent by the map $\beta \varphi \alpha^{-1}$. As a result, $(\chi_{(\beta, \alpha)}, \psi_{(\beta, \alpha)}, F_{(\beta, \alpha)}) - (\chi, \psi, F)$ and $( \widetilde{\chi}_{(\beta, \alpha)}, \widetilde{\psi}_{(\beta, \alpha)}, \widetilde{F}_{(\beta, \alpha)}) - (\widetilde{\chi}, \widetilde{\psi}, \widetilde{F})$ are equivalent by the map $\beta \varphi \alpha^{-1} - \varphi$. Hence the result follows.
\end{proof}

With the notations from the above discussions, we now define the Wells map 
\begin{align*}
\mathcal{W} : \mathrm{Aut} (\mathfrak{h}, S) \times \mathrm{Aut} (\mathfrak{g}, N) \rightarrow H^2_{nab} ((\mathfrak{g}, N); (\mathfrak{h}, S)) ~~~ \text{ by } ~~~~ \mathcal{W} (\beta, \alpha) := [  (\chi_{(\beta, \alpha)}, \psi_{(\beta, \alpha)}, F_{(\beta, \alpha)}) - (\chi, \psi, F) ],
\end{align*}
for $(\beta, \alpha) \in  \mathrm{Aut} (\mathfrak{h}, S) \times \mathrm{Aut} (\mathfrak{g}, N)$. It follows from Proposition \ref{prop-a} that the Wells map is independent of the choice of any section. Using this Wells map, we now obtain the following result for the inducibility of a pair of Nijenhuis Lie algebra automorphisms.

\begin{thm}\label{thm-ind-aut}
Let (\ref{ext-ind-aut}) be a non-abelian extension of the Nijenhuis Lie algebra $(\mathfrak{g}, [~,~]_\mathfrak{g}, N)$ by another Nijenhuis Lie algebra $(\mathfrak{h}, [~,~]_\mathfrak{h}, S)$. Then a pair $(\beta, \alpha) \in \mathrm{Aut} (\mathfrak{h}, S) \times \mathrm{Aut} (\mathfrak{g}, N)$ of Nijenhuis Lie algebra automorphisms is inducible if and only if $\mathcal{W} (\beta, \alpha) = 0$.
\end{thm}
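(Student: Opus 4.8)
The plan is to reduce the statement to Proposition \ref{first-prop}, which already reformulates inducibility of $(\beta,\alpha)$ as the existence of a section-dependent linear map $\lambda:\mathfrak{g}\to\mathfrak{h}$ satisfying (\ref{ab1})--(\ref{ab3}); what then remains is to recognise that the existence of such a $\lambda$ is exactly the assertion $\mathcal{W}(\beta,\alpha)=0$. By construction the Wells map sends $(\beta,\alpha)$ to the class of $(\chi_{(\beta,\alpha)},\psi_{(\beta,\alpha)},F_{(\beta,\alpha)})-(\chi,\psi,F)$, so $\mathcal{W}(\beta,\alpha)=0$ holds precisely when $(\chi_{(\beta,\alpha)},\psi_{(\beta,\alpha)},F_{(\beta,\alpha)})$ and $(\chi,\psi,F)$ represent the same class, that is, are equivalent in the sense of (\ref{nce1})--(\ref{nce3}); by Proposition \ref{prop-a} this is independent of the chosen section $s$, which I therefore keep fixed.

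First I would write out this equivalence explicitly. Taking $(\chi_{(\beta,\alpha)},\psi_{(\beta,\alpha)},F_{(\beta,\alpha)})$ as the unprimed cocycle and $(\chi,\psi,F)$ as the primed one in (\ref{nce1})--(\ref{nce3}), the equivalence asks for a linear map $\varphi:\mathfrak{g}\to\mathfrak{h}$ with
\begin{align*}
(\psi_{(\beta,\alpha)})_x h-\psi_x h &= [\varphi(x),h]_\mathfrak{h},\\
\chi_{(\beta,\alpha)}(x,y)-\chi(x,y) &= \psi_x\varphi(y)-\psi_y\varphi(x)-\varphi([x,y]_\mathfrak{g})+[\varphi(x),\varphi(y)]_\mathfrak{h},\\
F_{(\beta,\alpha)}(x)-F(x) &= S(\varphi(x))-\varphi(N(x)),
\end{align*}
for all $x,y\in\mathfrak{g}$ and $h\in\mathfrak{h}$ (note that the middle relation involves the primed datum $\psi'=\psi$, since the second cocycle is $(\chi,\psi,F)$).

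The core of the argument is then a single change of variables matching these three relations with (\ref{ab1})--(\ref{ab3}). I would set $\varphi:=\lambda\circ\alpha^{-1}$, equivalently $\lambda=\varphi\circ\alpha$, and substitute $\alpha^{-1}(x),\alpha^{-1}(y)$ for $x,y$ and, where it occurs, $\beta^{-1}(h)$ for $h$ in (\ref{ab1})--(\ref{ab3}). Inserting the definitions $\chi_{(\beta,\alpha)}(x,y)=\beta\chi(\alpha^{-1}(x),\alpha^{-1}(y))$, $(\psi_{(\beta,\alpha)})_x h=\beta(\psi_{\alpha^{-1}(x)}\beta^{-1}(h))$ and $F_{(\beta,\alpha)}(x)=\beta(F(\alpha^{-1}(x)))$, and using that $\alpha$ is a Lie algebra automorphism commuting with $N$ (so that $\alpha^{-1}[x,y]_\mathfrak{g}=[\alpha^{-1}(x),\alpha^{-1}(y)]_\mathfrak{g}$ and $N\alpha^{-1}=\alpha^{-1}N$), each of (\ref{ab1}), (\ref{ab2}), (\ref{ab3}) is transformed into precisely the corresponding displayed relation above. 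This produces a bijective correspondence $\lambda\leftrightarrow\varphi=\lambda\circ\alpha^{-1}$ between solutions of (\ref{ab1})--(\ref{ab3}) and maps witnessing the equivalence of the two cocycles.

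Assembling the pieces finishes the proof: by Proposition \ref{first-prop}, $(\beta,\alpha)$ is inducible iff some $\lambda$ solves (\ref{ab1})--(\ref{ab3}); by the correspondence just set up this holds iff some $\varphi$ witnesses the equivalence of $(\chi_{(\beta,\alpha)},\psi_{(\beta,\alpha)},F_{(\beta,\alpha)})$ with $(\chi,\psi,F)$; and by the construction of the Wells map this is exactly $\mathcal{W}(\beta,\alpha)=0$. I expect the one genuine bookkeeping obstacle to be the middle equation, (\ref{ab2}) against the $\chi$-relation, where the two $\psi$-linear terms, the term $\varphi([x,y]_\mathfrak{g})$ and the quadratic bracket $[\varphi(x),\varphi(y)]_\mathfrak{h}$ must all be reconciled simultaneously under $\varphi=\lambda\circ\alpha^{-1}$ (here the identity $\alpha^{-1}[x,y]_\mathfrak{g}=[\alpha^{-1}(x),\alpha^{-1}(y)]_\mathfrak{g}$ is essential); the $\psi$- and $F$-relations fall out immediately, the latter using only $N\alpha^{-1}=\alpha^{-1}N$.
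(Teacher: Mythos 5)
Your proposal is correct and follows essentially the same route as the paper: both directions reduce to Proposition \ref{first-prop}, and the key step is exactly the paper's change of variables $x\mapsto\alpha^{-1}(x)$, $y\mapsto\alpha^{-1}(y)$, $h\mapsto\beta^{-1}(h)$ in (\ref{ab1})--(\ref{ab3}), yielding the correspondence $\varphi=\lambda\circ\alpha^{-1}$ between solutions of those identities and equivalences of the two non-abelian $2$-cocycles. Your extra care about which cocycle plays the ``primed'' role in (\ref{nce2}) is a point the paper glosses over, and it checks out.
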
 

\begin{proof}
Let $(\beta, \alpha) \in \mathrm{Aut} (\mathfrak{h}, S) \times \mathrm{Aut} (\mathfrak{g}, N)$ be an inducible pair of Nijenhuis Lie algebra automorphisms. For any fixed section $s$, let the given non-abelian extension produce the non-abelian $2$-cocycle $(\chi, \psi, F)$. Then by Proposition \ref{first-prop}, there exists a linear map $\lambda : \mathfrak{g} \rightarrow \mathfrak{h}$ satisfying (\ref{ab1})-(\ref{ab3}). In these identities, if we replace $x, y, h$ respectively by $\alpha^{-1} (x), \alpha^{-1} (y), \beta^{-1} (h)$, we get that
\begin{align*}
(\psi_{(\beta, \alpha)})_x h - \psi_x h =~& [\lambda \alpha^{-1} (x), h]_\mathfrak{h}, \\
\chi_{(\beta, \alpha)} (x,y) - \chi (x,y) =~& \psi_x \lambda \alpha^{-1} (y) - \psi_y \lambda \alpha^{-1} (x) - \lambda \alpha^{-1} ([x,y]_\mathfrak{g}) + [\lambda \alpha^{-1} (x), \lambda \alpha^{-1}(y)]_\mathfrak{h},\\
F_{(\beta, \alpha)} (x) - F (x) =~& S (\lambda \alpha^{-1} (x)) - \lambda \alpha^{-1} (N(x)).
\end{align*}
This shows that the non-abelian $2$-cocycles $(\chi_{(\beta, \alpha)}, \psi_{(\beta, \alpha)}, F_{(\beta, \alpha)})$ and $(\chi, \psi, F)$ are equivalent by the map $\lambda \alpha^{-1}$. This in turn implies that the cohomology class $\mathcal{W} (\beta, \alpha) = [ (\chi_{(\beta, \alpha)}, \psi_{(\beta, \alpha)}, F_{(\beta, \alpha)}) - (\chi, \psi, F)] $ is trivial.

Conversely, assume that $\mathcal{W} (\beta, \alpha) = 0$. As before, let $s$ be any section and $(\chi, \psi, F)$ be the non-abelian $2$-cocycle produced from the given non-abelian extension. Then it follows that the non-abelian $2$-cocycles $(\chi_{(\beta, \alpha)}, \psi_{(\beta, \alpha)}, F_{(\beta, \alpha)})$ and $(\chi, \psi, F)$ are equivalent (say by the map $\varphi : \mathfrak{g} \rightarrow \mathfrak{h}$). Then it is easy to see that the map $\lambda := \varphi \alpha : \mathfrak{g} \rightarrow \mathfrak{h}$ satisfy the identities (\ref{ab1})-(\ref{ab3}). Therefore, by Proposition \ref{first-prop}, the pair $(\beta, \alpha)$ is inducible.
\end{proof}

It follows from the above theorem that $\mathcal{W} (\beta, \alpha)$ is an obstruction for the inducibility of the pair $(\beta, \alpha)$. This generalizes the well-known fact (from the contexts of abstract groups, Lie algebras, Rota-Baxter Lie algebras etc.) that the images of the Wells map are the obstructions for the inducibility of the pair of automorphisms. In the following, we will show that the Wells map defined above fits into a short exact sequence.

\begin{thm}\label{thm-wells-ses-aut}
Let $0 \rightarrow (\mathfrak{h}, [~,~]_\mathfrak{h}, S) \xrightarrow{i} (\mathfrak{e}, [~,~]_\mathfrak{e}, U) \xrightarrow{p} (\mathfrak{g}, [~,~]_\mathfrak{g}, N) \rightarrow 0$ be a non-abelian extension of Nijenhuis Lie algebras. Then there is an exact sequence
\begin{align}\label{wells-ses}
1 \rightarrow \mathrm{Aut}_{\mathfrak{h}, \mathfrak{g}} (\mathfrak{e}, U) \hookrightarrow \mathrm{Aut}_\mathfrak{h} (\mathfrak{e}, U) \xrightarrow{\tau} \mathrm{Aut} (\mathfrak{h}, S) \times \mathrm{Aut} (\mathfrak{g}, N) \xrightarrow{\mathcal{W}} H^2_{nab} ((\mathfrak{g}, N); (\mathfrak{h}, S)),
\end{align}
where $\mathrm{Aut}_{\mathfrak{h}, \mathfrak{g}} (\mathfrak{e}, U) = \{ \gamma \in \mathrm{Aut} (\mathfrak{e}, U) ~ \! | ~ \! \tau (\gamma) = (\mathrm{Id}_\mathfrak{h}, \mathrm{Id}_\mathfrak{g}) \}.$
\end{thm}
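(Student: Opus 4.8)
The plan is to prove exactness of (\ref{wells-ses}) spot by spot, using that $\tau$ has already been shown to be a group homomorphism. I would treat the two left-hand spots as formalities and isolate the genuine content at the right-hand spot, where Theorem \ref{thm-ind-aut} does all the work.

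First I would handle exactness at $\mathrm{Aut}_{\mathfrak{h}, \mathfrak{g}}(\mathfrak{e}, U)$: this is simply the injectivity of the inclusion into $\mathrm{Aut}_\mathfrak{h}(\mathfrak{e}, U)$, which is immediate as it is a genuine inclusion of subgroups. Next, for exactness at $\mathrm{Aut}_\mathfrak{h}(\mathfrak{e}, U)$ I would identify $\ker(\tau)$ with $\mathrm{Aut}_{\mathfrak{h}, \mathfrak{g}}(\mathfrak{e}, U)$; this is essentially the definition, since $\tau(\gamma) = (\mathrm{Id}_\mathfrak{h}, \mathrm{Id}_\mathfrak{g})$ is exactly the condition defining $\mathrm{Aut}_{\mathfrak{h}, \mathfrak{g}}(\mathfrak{e}, U)$. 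I would also record in passing that the condition $\gamma\big|_\mathfrak{h} = \mathrm{Id}_\mathfrak{h}$ already forces $\gamma(\mathfrak{h}) \subseteq \mathfrak{h}$, so that $\tau(\gamma)$ is meaningful for such $\gamma$.

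The crux is exactness at $\mathrm{Aut}(\mathfrak{h}, S) \times \mathrm{Aut}(\mathfrak{g}, N)$, where I must show that $\mathrm{im}(\tau)$ coincides with the preimage $\mathcal{W}^{-1}(0)$ of the distinguished base point of $H^2_{nab}((\mathfrak{g}, N); (\mathfrak{h}, S))$. By the very definition of $\tau$, its image is precisely the set of inducible pairs $(\beta, \alpha) \in \mathrm{Aut}(\mathfrak{h}, S) \times \mathrm{Aut}(\mathfrak{g}, N)$. On the other hand, Theorem \ref{thm-ind-aut} characterizes inducibility exactly by the vanishing of the Wells obstruction, that is, $(\beta, \alpha)$ is inducible if and only if $\mathcal{W}(\beta, \alpha) = 0$. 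Combining these two statements yields $\mathrm{im}(\tau) = \mathcal{W}^{-1}(0)$, which is the desired exactness.

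The point requiring care is conceptual rather than computational: $\mathcal{W}$ need not be a group homomorphism, and $H^2_{nab}((\mathfrak{g}, N); (\mathfrak{h}, S))$ carries no canonical group structure, so the final arrow must be read as a morphism of pointed sets and the last segment of (\ref{wells-ses}) interpreted as exact in the pointed-set sense, with base point the class of the trivial non-abelian $2$-cocycle. Once this convention is fixed, all the substantive work has already been carried out in Theorem \ref{thm-ind-aut}, and the present theorem amounts to repackaging it as an exact sequence; no further calculation is needed.
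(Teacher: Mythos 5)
Your proposal is correct and follows essentially the same route as the paper's proof: injectivity of the inclusion, $\ker(\tau) = \mathrm{Aut}_{\mathfrak{h},\mathfrak{g}}(\mathfrak{e}, U)$ by definition, and $\mathrm{im}(\tau) = \ker(\mathcal{W})$ as a direct consequence of Theorem \ref{thm-ind-aut}. Your added remark that the last arrow is only a map of pointed sets (so exactness there means $\mathrm{im}(\tau) = \mathcal{W}^{-1}(0)$) is a correct and worthwhile clarification that the paper leaves implicit.
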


\begin{proof}
First, note that the sequence (\ref{wells-ses}) is exact at the first place as $\mathrm{Aut}_{\mathfrak{h}, \mathfrak{g}} (\mathfrak{e}, U) \hookrightarrow \mathrm{Aut}_\mathfrak{h} (\mathfrak{e}, U)$ is the inclusion map. To show that the sequence is exact at the second place, we start with an element $\gamma \in \mathrm{ker} (\tau)$. Then we have $\tau (\gamma) = (\mathrm{Id}_\mathfrak{h}, \mathrm{Id}_\mathfrak{g})$ which implies that $\gamma \in \mathrm{Aut}_{\mathfrak{h}, \mathfrak{g}} (\mathfrak{e}, U)$. Conversely, if $\gamma \in \mathrm{Aut}_{\mathfrak{h}, \mathfrak{g}} (\mathfrak{e}, U)$ then we obviously get that $\gamma \in \mathrm{ker} (\tau)$. Hence $\mathrm{ker}(\tau) = \mathrm{Aut}_{\mathfrak{h}, \mathfrak{g}} (\mathfrak{e}, U)$ and thus the sequence is exact at the second place. Finally, we take an element $(\beta, \alpha) \in \mathrm{ker} (\mathcal{W})$. That is, $\mathcal{W} (\beta, \alpha) = 0$ which implies that the pair $(\beta, \alpha)$ is inducible. Therefore, there exists a Nijenhuis Lie algebra automorphism $\gamma \in \mathrm{Aut}_\mathfrak{h} (\mathfrak{e}, U)$ such that $\tau (\gamma) = (\beta, \alpha)$ which shows that $(\beta, \alpha) \in \mathrm{im}(\tau)$. Conversely, if $(\beta, \alpha) \in \mathrm{im} (\tau)$ then $(\beta, \alpha)$ is inducible and hence $(\beta, \alpha) \in \mathrm{ker} (\mathcal{W})$. Thus, $\mathrm{ker} (\mathcal{W}) = \mathrm{im} (\tau)$ which verifies that the sequence is also exact at the third place. This completes the proof.
\end{proof}

In the following, we consider the inducibility problem and the Wells exact sequence in a given abelian extension. Let $(\mathfrak{g}, [~,~]_\mathfrak{g}, N)$ be a Nijenhuis Lie algebra and $(V,\rho, S)$ be a Nijenhuis representation of it. Suppose (\ref{abel-ext}) is an abelian extension of the Nijenhuis Lie algebra $(\mathfrak{g}, [~,~]_\mathfrak{g}, N)$ by the Nijenhuis representation $(V, \rho, S)$. That is, the induced Nijenhuis representation from the above abelian extension coincides with the prescribed one. We set
\begin{align*}
\mathcal{C}_\rho := \big\{ (\beta, \alpha) \in \mathrm{Aut} (V, S) \times \mathrm{Aut} (\mathfrak{g}, N) ~ \! | ~ \! \beta (\rho_x v) = \rho_{\alpha (x)} \beta (v), \forall x \in \mathfrak{g}, v \in V \big\}
\end{align*}
and call it the space of all {\em compatible pairs} of Nijenhuis Lie algebra automorphisms. Then $\mathcal{C}_\rho$ is a subgroup of $\mathrm{Aut} (V, S) \times \mathrm{Aut} (\mathfrak{g}, N)$. If $\gamma \in \mathrm{Aut}_V (\mathfrak{e}, U)$ then it can be easily checked that $\tau (\gamma) = (\gamma \big|_V, \overline{\gamma}) \in \mathcal{C}_\rho$.

Given any section $s$, let the above abelian extension produce the $2$-cocycle $(\chi, F)$. Then for any $(\beta, \alpha) \in \mathrm{Aut} (V, S) \times \mathrm{Aut} (\mathfrak{g}, N)$, the pair $(\chi_{(\beta, \alpha)} , F_{(\beta, \alpha)})$ need not be a $2$-cocycle. However, if $(\beta, \alpha) \in \mathcal{C}_\rho$ then it turns out that $(\chi_{(\beta, \alpha)} , F_{(\beta, \alpha)})$ is a $2$-cocycle. Hence the Wells map in this context is the map $\mathcal{W} : \mathcal{C}_\rho \rightarrow H^2 ((\mathfrak{g}, N); (V, S))$ given by
\begin{align}\label{wells-aut-abel}
\mathcal{W} (\beta, \alpha) := [(\chi_{(\beta, \alpha)} , F_{(\beta, \alpha)}) - (\chi, F) ], \text{ for } (\beta, \alpha) \in \mathcal{C}_\rho.
\end{align}
This map is independent of the choice of the section $s$. Hence, in this context, Theorems \ref{thm-ind-aut} and \ref{thm-wells-ses-aut} can be summarized as follows.

\begin{thm}
Let (\ref{abel-ext}) be an abelian extension of the Nijenhuis Lie algebra $(\mathfrak{g}, [~,~]_\mathfrak{g}, N)$ by the Nijenhuis representation $(V, \rho, S)$. 
\begin{itemize}
\item[(i)] Then a pair $(\beta, \alpha) \in \mathrm{Aut} (V, S) \times \mathrm{Aut} (\mathfrak{g}, N)$ of Nijenhuis Lie algebra automorphisms is inducible if and only if $(\beta, \alpha) \in \mathcal{C}_\rho$ and $\mathcal{W} (\beta, \alpha) = 0$.
\item[(ii)] There is an exact sequence 
\begin{align}\label{wells-ses-aut-abel}
1 \rightarrow \mathrm{Aut}_{V, \mathfrak{g}} (\mathfrak{e}, U) \hookrightarrow \mathrm{Aut}_V (\mathfrak{e}, U) \xrightarrow{\tau} \mathcal{C}_\rho \xrightarrow{\mathcal{W}} H^2 ((\mathfrak{g}, N); (V, S)).
\end{align}
\end{itemize}
\end{thm}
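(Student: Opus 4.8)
The plan is to deduce both parts from the non-abelian results already established, by viewing the abelian extension (\ref{abel-ext}) as the special case of a non-abelian extension whose kernel is the Nijenhuis Lie algebra $(V, [~,~]_V = 0, S)$ with trivial bracket. Under this identification, a section $s$ produces a non-abelian $2$-cocycle whose middle datum is precisely the induced representation $\rho$; that is, the triple is $(\chi, \rho, F)$ in the notation of Section \ref{sec3}. The whole argument then reduces to tracking how the three inducibility conditions (\ref{ab1})--(\ref{ab3}) of Proposition \ref{first-prop} degenerate once $[~,~]_V = 0$.

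For part (i) I would start from Proposition \ref{first-prop}. With $\mathfrak{h} = V$ and the bracket on $V$ trivial, condition (\ref{ab1}) becomes $\beta(\rho_x v) - \rho_{\alpha(x)}\beta(v) = [\lambda(x), \beta(v)]_V = 0$, which no longer involves $\lambda$ and is exactly the statement that $(\beta, \alpha) \in \mathcal{C}_\rho$; simultaneously the term $[\lambda(x), \lambda(y)]_V$ in (\ref{ab2}) vanishes. Hence a pair $(\beta, \alpha)$ is inducible if and only if $(\beta, \alpha) \in \mathcal{C}_\rho$ and there is a linear map $\lambda : \mathfrak{g} \rightarrow V$ satisfying the reduced forms of (\ref{ab2}) and (\ref{ab3}). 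Exactly as in the proof of Theorem \ref{thm-ind-aut}, substituting $\alpha^{-1}(x), \alpha^{-1}(y)$ for $x, y$ shows that these reduced identities admit such a $\lambda$ if and only if the $2$-cocycles $(\chi_{(\beta,\alpha)}, F_{(\beta,\alpha)})$ and $(\chi, F)$ are cohomologous, i.e. $\mathcal{W}(\beta, \alpha) = 0$ (with $\lambda = \varphi \alpha$ recovering the map witnessing the cohomologous relation). Combining the two observations yields the stated criterion.

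For part (ii) I would verify exactness at each nontrivial spot. At $\mathrm{Aut}_V(\mathfrak{e}, U)$ exactness is immediate, since $\ker(\tau) = \mathrm{Aut}_{V, \mathfrak{g}}(\mathfrak{e}, U)$ by the very definition of the latter. For the map $\tau : \mathrm{Aut}_V(\mathfrak{e}, U) \rightarrow \mathcal{C}_\rho$ I first record the observation noted just above the theorem, namely that $\tau(\gamma) = (\gamma|_V, \overline{\gamma})$ indeed lands in $\mathcal{C}_\rho$, so the sequence is well-formed. Exactness at $\mathcal{C}_\rho$ then amounts to $\mathrm{im}(\tau) = \ker(\mathcal{W})$: a pair $(\beta, \alpha) \in \mathcal{C}_\rho$ lies in $\ker(\mathcal{W})$ iff $\mathcal{W}(\beta, \alpha) = 0$ iff, by part (i), the pair is inducible iff it equals $\tau(\gamma)$ for some $\gamma \in \mathrm{Aut}_V(\mathfrak{e}, U)$. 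This is the exact analogue of the argument in Theorem \ref{thm-wells-ses-aut}.

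The one genuinely new point, and the step I would be most careful about, is the emergence of the compatibility subgroup $\mathcal{C}_\rho$. In the non-abelian setting the representation part of the data is itself allowed to vary, and condition (\ref{ab1}) is absorbed into the equivalence relation on $2$-cocycles through (\ref{nce1}). When the bracket on $V$ is trivial, (\ref{nce1}) forces $\rho = \rho'$, so the equivalence of abelian $2$-cocycles carries no information about the action; consequently the action-compatibility condition can no longer be hidden inside the vanishing of $\mathcal{W}(\beta, \alpha)$ and must be imposed separately as membership in $\mathcal{C}_\rho$. This is exactly why $\mathcal{C}_\rho$, rather than all of $\mathrm{Aut}(V, S) \times \mathrm{Aut}(\mathfrak{g}, N)$, is the natural domain of the Wells map: it is precisely the locus on which $(\chi_{(\beta,\alpha)}, F_{(\beta,\alpha)})$ is again a $2$-cocycle, so that $\mathcal{W}$ is defined.
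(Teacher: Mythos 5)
Your proposal is correct and follows essentially the same route as the paper, which states this theorem as a direct specialization of Proposition \ref{first-prop}, Theorem \ref{thm-ind-aut} and Theorem \ref{thm-wells-ses-aut} to the case of a trivial bracket on $V$. Your observation that condition (\ref{ab1}) degenerates to membership in $\mathcal{C}_\rho$ (so that the action-compatibility can no longer be absorbed into the cocycle equivalence and must be imposed as the domain of $\mathcal{W}$) is exactly the point the paper makes just before the theorem.
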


\medskip

An abelian extension (\ref{abel-ext}) of the Nijenhuis Lie algebra $(\mathfrak{g}, [~,~]_\mathfrak{g}, N)$ by a Nijenhuis representation $(V, \rho, S)$ is said to be {\em split} if there is a section $s: \mathfrak{g} \rightarrow \mathfrak{e}$ which is a morphism of Nijenhuis Lie algebras. In this case, we can identify the Nijenhuis Lie algebra $(\mathfrak{e}, [~,~]_\mathfrak{e}, U)$ with the semidirect product Nijenhuis Lie algebra $(\mathfrak{g} \oplus V, [~,~]_\ltimes, N \oplus S)$. Moreover, with this identification, the map $i$ is the obvious inclusion and $p$ is the projection onto the subspace $\mathfrak{g}$. With respect to the above section $s$, if the abelian extension produced the $2$-cocycle $(\chi, F)$ then we have
\begin{align*}
\chi (x, y) = [s(x), s(y)]_\mathfrak{e} - s [x, y]_\mathfrak{g} = 0 \quad \text{ and } \quad F(x) = (Us- s N) (x) = 0, \text{ for all } x, y \in \mathfrak{g} 
\end{align*}
as $s: \mathfrak{g} \rightarrow \mathfrak{e}$ is a morphism of Nijenhuis Lie algebras. Thus, the $2$-cocycle $(\chi, F)$ is trivial and so the Wells map (\ref{wells-aut-abel}) vanishes identically. Hence the Wells exact sequence (\ref{wells-ses-aut-abel}) takes the form
\begin{align}\label{takes-the-form}
1 \rightarrow \mathrm{Aut}_{V, \mathfrak{g}} (\mathfrak{e}, U) \hookrightarrow \mathrm{Aut}_V (\mathfrak{e}, U) \xrightarrow{\tau} \mathcal{C}_\rho \rightarrow 1.
\end{align}
Next, for any $(\beta, \alpha) \in \mathcal{C}_\rho$, we define a map $\gamma_{(\beta, \alpha)} : \mathfrak{e} \rightarrow \mathfrak{e}$ by $\gamma_{(\beta, \alpha)} ( s(x) + u) := s (\alpha (x)) + \beta (u)$, for any $s(x)+ u \in \mathfrak{e}$. Then it is easy to see that $\gamma_{(\beta, \alpha)} \in \mathrm{Aut}_V (\mathfrak{e}, U)$. Hence we obtain a map $t : \mathcal{C}_\rho \rightarrow \mathrm{Aut}_V (\mathfrak{e}, U)$ given by $t (\beta, \alpha) := \gamma_{(\beta, \alpha)}$, for $(\beta, \alpha) \in \mathcal{C}_\rho$. It is easy to see that $t$ is a group homomorphism and $\tau t = \mathrm{Id}_{\mathcal{C}_\rho}$ which implies that the sequence (\ref{takes-the-form}) is a split exact sequence of groups. Thus, when the abelian extension (\ref{abel-ext}) is split, we obtain the following isomorphism of groups
\begin{align*}
\mathrm{Aut}_V (\mathfrak{e}, U) ~ \! \cong ~ \! \mathcal{C}_\rho \ltimes \mathrm{Aut}_{V, \mathfrak{g}} (\mathfrak{e}, U).
\end{align*}

\medskip

\section{Inducibility of a pair of Nijenhuis Lie algebra derivations}\label{sec5}

In this section, we study the inducibility of a pair of Nijenhuis Lie algebra derivations in a given abelian extension. We find a necessary and sufficient condition for this inducibility problem in terms of a suitable Wells map. In the end, we also derive the corresponding Wells exact sequence in the present context.

\medskip

Let $(\mathfrak{g}, [~,~]_\mathfrak{g}, N)$ be a Nijenhuis Lie algebra. A linear map $D: \mathfrak{g} \rightarrow \mathfrak{g}$ is said to be a {\em Nijenhuis Lie algebra derivation} if $D$ is a usual derivation on the Lie algebra $(\mathfrak{g}, [~,~]_\mathfrak{g})$, i.e.
\begin{align*}
D ([x, y]_\mathfrak{g}) = [D(x), y]_\mathfrak{g} + [x, D(y)]_\mathfrak{g}, \text{ for } x, y \in \mathfrak{g}
\end{align*}
satisfying additionally $N \circ D= D \circ N$. We denote the space of all Nijenhuis Lie algebra derivations on $(\mathfrak{g}, [~,~]_\mathfrak{g}, N)$ simply by the notation $\mathrm{Der} (\mathfrak{g}, N)$. Then $\mathrm{Der} (\mathfrak{g}, N)$ is naturally a Lie algebra with the commutator bracket. More generally, let $(\mathfrak{g}, [~,~]_\mathfrak{g}, N)$ be a Nijenhuis Lie algebra and $(V, \rho, S)$ be a Nijenhuis representation of it. Then a linear map $d: \mathfrak{g} \rightarrow V$ is said to be a Nijenhuis Lie algebra derivation (with values in the Nijenhuis representation) if
\begin{align*}
S \circ d = d \circ N \quad \text{ and } \quad d ([x, y]_\mathfrak{g}) = \rho_x d(y) - \rho_y d(x), \text{ for all } x, y \in \mathfrak{g}. 
\end{align*}
We denote the space of such Nijenhuis Lie algebra derivations by $\mathrm{Der} ((\mathfrak{g}, N); (V, S)).$

%Note that Nijenhuis Lie algebra derivations can be also defined on a Nijenhuis Lie algebra with values in a Nijenhuis representation. We postpone the study of such Nijenhuis Lie algebra derivations for a separate work.

Let
\begin{align}\label{abel-last}
\xymatrix{
0 \ar[r] & (V, [~,~]_V = 0, S) \ar[r]^i & (\mathfrak{e}, [~,~]_\mathfrak{e}, U) \ar[r]^p & (\mathfrak{g}, [~,~]_\mathfrak{g}, N) \ar[r] & 0
}
\end{align}
be a fixed abelian extension of the Nijenhuis Lie algebra $(\mathfrak{g}, [~,~]_\mathfrak{g}, N)$ by a Nijenhuis representation $(V, \rho, S)$. Suppose $\mathrm{Der}_V (\mathfrak{e}, U) = \{ D \in \mathrm{Der} (\mathfrak{e}, U) ~ \! | ~ \! D (V) \subset V \}$ is the set of all Nijenhuis Lie algebra derivations on $(\mathfrak{e}, [~,~]_\mathfrak{g}, U)$ that make the subspace $V$ invariant. Then for any $D \in \mathrm{Der}_V (\mathfrak{e}, U)$, we have $D |_V \in \mathrm{Der} (V, S)$. By choosing a section $s: \mathfrak{g} \rightarrow \mathfrak{e}$ of the map $p$, we define a map $\overline{D} : \mathfrak{g} \rightarrow \mathfrak{g}$ by $\overline{D} (x) := (p D s) (x)$, for $x \in \mathfrak{g}$. The map $\overline{D}$ is independent of the choice of $s$. Moreover, for any $x, y \in \mathfrak{g}$, we observe that
\begin{align*}
\overline{D} ([x, y]_\mathfrak{g}) = pD s ( [x, y]_\mathfrak{g})
=~& p D ( [s(x), s(y)]_\mathfrak{e} - \chi (x, y))\\
=~& p D ( [s(x), s(y)]_\mathfrak{e} ) \quad (\text{as } ~ \! D (V) \subset V \text{ and } p |_V = 0) \\
=~& p ( [Ds(x), s(y)]_\mathfrak{e} + [s(x), Ds(y)]_\mathfrak{e} )\\
=~& [ \overline{D} (x), y]_\mathfrak{g} + [x, \overline{D} (y)]_\mathfrak{g} \quad (\text{as } ~ \! pDs = \overline{D} \text{ and } ps = \mathrm{Id}_\mathfrak{g})
\end{align*}
and similar to (\ref{commute-n}), one can show that $N \overline{D} = \overline{D} N$. This shows that $\overline{D} \in \mathrm{Der} (\mathfrak{g}, N)$ is a Nijenhuis Lie algebra derivation on $(\mathfrak{g}, [~,~]_\mathfrak{g}, N)$. Hence there is a well-defined map 
\begin{align*}
\eta : \mathrm{Der}_V (\mathfrak{e}, U) \rightarrow \mathrm{Der} (V, S) \times \mathrm{Der} (\mathfrak{g}, N) ~~~~ \text{ given by } ~~~~ \eta (D) = (D \big|_V, \overline{D}).
\end{align*}
A pair of Nijenhuis Lie algebra derivations $(D_V, D_\mathfrak{g}) \in \mathrm{Der} (V, S) \times \mathrm{Der} (\mathfrak{g}, N)$ is said to be {\bf inducible} if it lies in the image of the map $\eta$. Our main aim in this section is to find a necessary and sufficient condition for the inducibility of a given pair of Nijenhuis Lie algebra derivations. 

\begin{prop}\label{prop-ind-der}
Let $(D_V, D_\mathfrak{g}) \in \mathrm{Der} (V, S) \times \mathrm{Der} (\mathfrak{g}, N)$ be an inducible pair of Nijenhuis Lie algebra derivations. Then
\begin{align}\label{ind-der}
D_V (\rho_x v) = \rho_{D_\mathfrak{g} (x)} v + \rho_x D_V (v), \text{ for any } x \in \mathfrak{g} \text{ and } v \in V.
\end{align}
\end{prop}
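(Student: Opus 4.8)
The plan is to use inducibility directly. Since $(D_V, D_\mathfrak{g})$ lies in the image of $\eta$, there exists a Nijenhuis Lie algebra derivation $D \in \mathrm{Der}_V (\mathfrak{e}, U)$ with $D\big|_V = D_V$ and $\overline{D} = pDs = D_\mathfrak{g}$, where $s$ is any fixed section of $p$. The identity (\ref{ind-der}) concerns the induced representation $\rho$, so I would recall that $\rho_x v = [s(x), v]_\mathfrak{e}$ for $x \in \mathfrak{g}$ and $v \in V$, and that $\rho_x v \in V$ because $V$ is an ideal of $\mathfrak{e}$. Notably, the Nijenhuis conditions $UD = DU$ and $SD_V = D_V S$ will not be needed for this particular identity, which depends only on the derivation property of $D$ together with the abelian hypothesis on the extension.

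First I would apply the derivation property of $D$ to the element $[s(x), v]_\mathfrak{e} = \rho_x v$, obtaining
\[
D(\rho_x v) = [Ds(x), v]_\mathfrak{e} + [s(x), Dv]_\mathfrak{e}.
\]
Since $\rho_x v \in V$ and $D(V) \subset V$, the left-hand side equals $D_V(\rho_x v)$; and since $v \in V$ forces $Dv = D_V(v) \in V$, the last term equals $\rho_x D_V(v)$.

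Next I would analyze the middle term $[Ds(x), v]_\mathfrak{e}$. Because $p(Ds(x)) = \overline{D}(x) = D_\mathfrak{g}(x)$, the element $\kappa(x) := Ds(x) - s(D_\mathfrak{g}(x))$ lies in $\mathrm{ker}\,(p) = V$. Substituting the decomposition $Ds(x) = s(D_\mathfrak{g}(x)) + \kappa(x)$ gives
\[
[Ds(x), v]_\mathfrak{e} = [s(D_\mathfrak{g}(x)), v]_\mathfrak{e} + [\kappa(x), v]_\mathfrak{e} = \rho_{D_\mathfrak{g}(x)} v + [\kappa(x), v]_\mathfrak{e}.
\]
Here the crucial point is that the extension is abelian, so $[~,~]_V = 0$ and hence $[\kappa(x), v]_\mathfrak{e} = 0$, as both $\kappa(x)$ and $v$ lie in $V$.

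Combining the three displays yields exactly $D_V(\rho_x v) = \rho_{D_\mathfrak{g}(x)} v + \rho_x D_V(v)$, as desired. There is no serious obstacle in this argument; the only step requiring care is the vanishing of $[\kappa(x), v]_\mathfrak{e}$, which is precisely where the abelian hypothesis on the extension is genuinely used, and the analogous computation would acquire an extra correction term in a general non-abelian extension.
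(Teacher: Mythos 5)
Your proof is correct and follows essentially the same route as the paper: extract $D$ from inducibility, apply the derivation identity to $\rho_x v = [s(x), v]_\mathfrak{e}$, and replace $Ds(x)$ by $sD_\mathfrak{g}(x)$ modulo an element of $V$ that contributes nothing because the extension is abelian. You merely make explicit the vanishing of $[\kappa(x), v]_\mathfrak{e}$, which the paper leaves implicit in its remark that $(Ds - sD_\mathfrak{g})(x) \in \ker(p) \cong V$.
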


\begin{proof}
Since $(D_V, D_\mathfrak{g})$ is an inducible pair, there exists a Nijenhuis Lie algebra derivation $D \in \mathrm{Der}_V (\mathfrak{e}, U)$ such that $D|_V = D_V$ and $pDs = D_\mathfrak{g}$, where $s$ is any arbitrary section. Hence we have 
\begin{align*}
D_V (\rho_x v) =~& D ( [s(x), v]_\mathfrak{e}) \\
=~& [D s(x) , v]_\mathfrak{e} + [s(x), D (v)]_\mathfrak{e} \\
=~& [s D_\mathfrak{g} (x), v]_\mathfrak{e} + [s(x),  D_V (v) ]_\mathfrak{e} \quad (\because ~ \! (Ds -s D_\mathfrak{g}) (x) \in \mathrm{ker} (p) = \mathrm{im} (i) \cong V \text{ and } D |_V = D_V ) \\
=~& \rho_{D_\mathfrak{g} (x)} v + \rho_x D_V (v).
\end{align*}
This completes the proof.
\end{proof}

Motivated by the above result, we define
\begin{align*}
\mathcal{D}_\rho := \{ (D_V, D_\mathfrak{g}) \in \mathrm{Der} (V, S) \times \mathrm{Der} (\mathfrak{g}, N) ~ \! | ~ \! \text{ the identity } (\ref{ind-der}) \text{ holds} \}.
\end{align*}
It is easy to see that $\mathcal{D}_\rho$ is a Lie algebra with the componentwise commutator bracket. Let $(D_V, D_\mathfrak{g}) \in \mathcal{D}_\rho$. For any linear maps $\chi : \wedge^2 \mathfrak{g} \rightarrow V$ and $F : \mathfrak{g} \rightarrow V$, we define new maps $\chi_{(D_V, D_\mathfrak{g})} : \wedge^2 \mathfrak{g} \rightarrow V$ and $F_{ (D_V, D_\mathfrak{g}) } : \mathfrak{g} \rightarrow V$ by 
\begin{align*}
\chi_{(D_V, D_\mathfrak{g})} (x, y) := D_V (\chi (x, y)) -\chi (D_\mathfrak{g}(x), y) - \chi (x, D_\mathfrak{g} (y)) ~~~~ \text{ and } ~~~~ F_{ (D_V, D_\mathfrak{g}) } (x) :=  D_V (F (x)) - F (D_\mathfrak{g} (x)),
\end{align*}
for $x , y \in \mathfrak{g}$. Then we have the following result.

\begin{prop}\label{propo-final}
\begin{itemize}
\item[(i)]  If $(\chi, F)$ is a $2$-cocycle of the Nijenhuis Lie algebra $(\mathfrak{g}, [~, ~ ]_\mathfrak{g}, N)$ with coefficients in the Nijenhuis representation $(V, \rho, S)$ then the pair $ ( \chi_{(D_V, D_\mathfrak{g})}, F_{ (D_V, D_\mathfrak{g}) }   )$ is so.
\item[(ii)] If $(\chi, F)$ and $(\chi', F')$ are two cohomologous $2$-cocycles then the $2$-cocycles $ ( \chi_{(D_V, D_\mathfrak{g})}, F_{ (D_V, D_\mathfrak{g}) }   )$ and $( \chi'_{(D_V, D_\mathfrak{g})}, F'_{ (D_V, D_\mathfrak{g}) }   )$ are also cohomologous.
\end{itemize}
\end{prop}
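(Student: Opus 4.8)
\emph{Proof proposal.} The plan is to treat the assignment $(\chi, F) \mapsto (\chi_{(D_V, D_\mathfrak{g})}, F_{(D_V, D_\mathfrak{g})})$ as the infinitesimal counterpart of the automorphism action $(\chi, F) \mapsto (\chi_{(\beta, \alpha)}, F_{(\beta, \alpha)})$ from Section \ref{sec4}: differentiating $\chi_{(\beta_t, \alpha_t)}$ at $t = 0$ along one-parameter families with $\beta_0 = \mathrm{Id}_V$, $\alpha_0 = \mathrm{Id}_\mathfrak{g}$, $\dot\beta_0 = D_V$, $\dot\alpha_0 = D_\mathfrak{g}$ reproduces exactly the operator defining $\chi_{(D_V, D_\mathfrak{g})}$, while the compatibility condition $(\beta_t, \alpha_t) \in \mathcal{C}_\rho$ linearizes to the identity \eqref{ind-der} defining $\mathcal{D}_\rho$. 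This heuristic dictates the explicit computations below and, in particular, predicts the coboundary datum needed in part (ii). Since the paper works over an arbitrary field, I would carry out the verifications by direct computation rather than by an exponentiation argument.

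For part (i), the key observation is that \eqref{2co-abel1} does not involve $F$, so for this identity I only need $D_V$ and $D_\mathfrak{g}$. I would apply the linear map $D_V$ to the instance of \eqref{2co-abel1} satisfied by $\chi$, rewriting each term via the compatibility relation \eqref{ind-der} in the form $D_V \rho_x = \rho_{D_\mathfrak{g}(x)} + \rho_x D_V$ together with the Leibniz rule $D_\mathfrak{g}([x,y]_\mathfrak{g}) = [D_\mathfrak{g}(x), y]_\mathfrak{g} + [x, D_\mathfrak{g}(y)]_\mathfrak{g}$. The left-hand side of \eqref{2co-abel1} for $\chi_{(D_V, D_\mathfrak{g})}$ then equals $D_V$ applied to \eqref{2co-abel1} for $\chi$ minus the three instances of \eqref{2co-abel1} for $\chi$ obtained by substituting $(D_\mathfrak{g}(x), y, z)$, $(x, D_\mathfrak{g}(y), z)$ and $(x, y, D_\mathfrak{g}(z))$; all the terms carrying a bare $D_\mathfrak{g}$ cancel in pairs and, since each of these four expressions vanishes, so does the target. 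To verify \eqref{2co-abel2} I would run the same scheme: apply $D_V$ to \eqref{2co-abel2} for $(\chi, F)$, now also invoking $S D_V = D_V S$ (valid since $D_V \in \mathrm{Der}(V, S)$) and $N D_\mathfrak{g} = D_\mathfrak{g} N$ (since $D_\mathfrak{g} \in \mathrm{Der}(\mathfrak{g}, N)$) to push $D_V$ through the nested $S$'s and $D_\mathfrak{g}$ through the $N$'s, and then combine with the two instances of \eqref{2co-abel2} with $D_\mathfrak{g}$ inserted into the first and second argument. After cancellation this collapses to \eqref{2co-abel2} for $(\chi_{(D_V, D_\mathfrak{g})}, F_{(D_V, D_\mathfrak{g})})$.

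The main obstacle is this second verification: identity \eqref{2co-abel2} mixes $\chi$ and $F$, and the operators $N$ and $S$ occur nested (for instance in $S(\chi(N(x), y) + \chi(x, N(y)) - S\chi(x, y))$), so the bookkeeping of where each $D_V$ or $D_\mathfrak{g}$ lands after commuting it past $N$ or $S$ is delicate; one must be careful that the $F$-terms produced by $D_V F$ versus $F D_\mathfrak{g}$ reassemble into $F_{(D_V, D_\mathfrak{g})}$ with the correct signs. I expect no conceptual difficulty here, only careful term-tracking.

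For part (ii), I would first note that $(\chi, F) \mapsto (\chi_{(D_V, D_\mathfrak{g})}, F_{(D_V, D_\mathfrak{g})})$ is linear, so it suffices to show that it sends coboundaries to coboundaries. Thus suppose $\chi - \chi'$ and $F - F'$ arise from a linear map $\varphi : \mathfrak{g} \rightarrow V$ via the cohomologous relation. Guided by the heuristic above, I would set $\varphi'(x) := D_V(\varphi(x)) - \varphi(D_\mathfrak{g}(x))$ and claim it realizes the cohomologous relation between the transformed cocycles. Verifying this is a short direct computation: expanding $(\chi - \chi')_{(D_V, D_\mathfrak{g})}(x, y)$ and applying \eqref{ind-der} and the Leibniz rule, the terms with $\rho_{D_\mathfrak{g}(\cdot)}$ cancel and the remainder regroups as $\rho_x \varphi'(y) - \rho_y \varphi'(x) - \varphi'([x, y]_\mathfrak{g})$, while expanding $(F - F')_{(D_V, D_\mathfrak{g})}(x)$ and using $S D_V = D_V S$ and $N D_\mathfrak{g} = D_\mathfrak{g} N$ gives $S(\varphi'(x)) - \varphi'(N(x))$. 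Hence the two transformed $2$-cocycles are cohomologous via $\varphi'$, which completes the proof.
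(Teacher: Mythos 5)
Your proposal is correct and follows essentially the same route as the paper: for (i) apply $D_V$ to the cocycle identities, commute it past $\rho$ via \eqref{ind-der}, past $S$ and $N$ via $SD_V = D_V S$ and $ND_\mathfrak{g} = D_\mathfrak{g}N$, use the Leibniz rule for $D_\mathfrak{g}$, and regroup into vanishing instances of \eqref{2co-abel1} and \eqref{2co-abel2}; for (ii) the coboundary datum $D_V\circ\varphi - \varphi\circ D_\mathfrak{g}$ is exactly the one the paper exhibits. The infinitesimal-of-the-automorphism-action heuristic is a nice addition but does not change the substance of the argument.
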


\begin{proof}
(i) Since $(\chi, \psi)$ is a $2$-cocycle, the identities (\ref{2co-abel1}) and (\ref{2co-abel2}) are hold. For any $x, y, z \in \mathfrak{g}$, we observe that
\begin{align*}
&\rho_x ~ \! \chi_{(D_V, D_\mathfrak{g})} (y, z) + c.p. + \chi_{(D_V, D_\mathfrak{g})} (x, [y, z]_\mathfrak{g}) + c.p. \\
&= \rho_x \big\{  D_V (\chi (y, z)) - \chi (D_\mathfrak{g}(y), z) - \chi (y, D_\mathfrak{g} (z)) \big\} + c. p. \\
& \quad \qquad + \big\{  D_V (\chi (x, [y, z]_\mathfrak{g})) - \chi (D_\mathfrak{g} (x), [y, z]_\mathfrak{g}) - \chi (x, D_\mathfrak{g} [y, z]_\mathfrak{g})  \big\} + c.p. \\
&= \big\{  D_V \rho_x \chi (y, z) - \rho_{D_\mathfrak{g} (x)} \chi (y, z) - \rho_x \chi (D_\mathfrak{g} (y), z) - \rho_x \chi (y, D_\mathfrak{g} (z))  \big\} + c.p. \\
& \quad \qquad + \big\{ D_V (\chi (x, [y, z]_\mathfrak{g})) - \chi (D_\mathfrak{g} (x), [y, z]_\mathfrak{g}) - \chi (x, [D_\mathfrak{g} (y), z]_\mathfrak{g}) - \chi (x, [y, D_\mathfrak{g} (z)]_\mathfrak{g})   \big\} + c.p. \\
&= 0 \quad (\text{as } \chi \text{ satisfies } (\ref{2co-abel1})).
\end{align*}
Here $c.p.$ stands for the cyclic permutations of the inputs $x, y, z$. In the same way, by using the definitions of $\chi_{(D_V, D_\mathfrak{g})}$ and $ F_{(D_V, D_\mathfrak{g})}$ and also by using the identity (\ref{2co-abel2}), one can easily show that
\begin{align*}
 & \chi_{(D_V, D_\mathfrak{g})} (N (x), N (y)) - S \big( \chi_{(D_V, D_\mathfrak{g})} ( N (x), y) + \chi_{(D_V, D_\mathfrak{g})} (x, N (y)) - S ( \chi_{(D_V, D_\mathfrak{g})} ( x, y) ) \big) \\
 & \quad - F_{(D_V, D_\mathfrak{g})} \big( [N (x), y]_\mathfrak{g} + [x, N (y)]_\mathfrak{g} - N [x, y]_\mathfrak{g}    \big)  + \rho_{N(x)} F_{(D_V, D_\mathfrak{g})} (y) - \rho_{N (y)} F_{(D_V, D_\mathfrak{g})} (x) \\ 
 & \qquad \qquad - S \big( \rho_x F_{(D_V, D_\mathfrak{g})} (y) - \rho_y F_{(D_V, D_\mathfrak{g})} (x) - F_{(D_V, D_\mathfrak{g})} [x, y]_\mathfrak{g} \big) = 0.
\end{align*}
This shows that $(\chi_{(D_V, D_\mathfrak{g})}, F_{(D_V, D_\mathfrak{g})})$ is a $2$-cocycle.

\medskip

(ii) Since $(\chi, F)$ and $(\chi', F')$ are cohomologous $2$-cocycles, there exists a linear map $\varphi : \mathfrak{g} \rightarrow V$ such that
\begin{align}\label{useful-eqn}
\chi (x, y) - \chi' (x, y) = \rho_x \varphi (y) - \rho_y \varphi (x) - \varphi ([x, y]_\mathfrak{g}) ~~~~ \text{ and } ~~~~ F(x) - F'(x) = S (\varphi (x)) - \varphi (N(x)),
\end{align}
for all $x, y \in \mathfrak{g}$. Hence we have
\begin{align*}
&\chi_{(D_V, D_\mathfrak{g})} - \chi'_{(D_V, D_\mathfrak{g})} \\
&= \big\{ D_V (\chi (x, y)) -\chi (D_\mathfrak{g} (x), y) - \chi (x, D_\mathfrak{g} (y)) \big\} - \big\{ D_V (\chi' (x, y)) -\chi' (D_\mathfrak{g} (x), y) - \chi' (x, D_\mathfrak{g} (y)) \big\} \\
&=  D_V \big( \rho_x \varphi (y) - \rho_y \varphi (x) - \varphi ([x, y]_\mathfrak{g})   \big)  - \big\{  \rho_{D_\mathfrak{g} (x)} \varphi (y) - \rho_y \varphi (D_\mathfrak{g} (x)) - \varphi ([D_\mathfrak{g} (x), y]_\mathfrak{g})   \big\}\\
& \qquad \qquad - \big\{ \rho_x \varphi (D_\mathfrak{g} (y)) - \rho_{D_\mathfrak{g} (y)} \varphi (x) - \varphi ([x, D_\mathfrak{g} (y) ]_\mathfrak{g})  \big\} \qquad (\text{by } (\ref{useful-eqn}))\\
&= \rho_x (D_V \circ \varphi - \varphi \circ D_\mathfrak{g})(y) - \rho_y (D_V \circ \varphi - \varphi \circ D_\mathfrak{g}) (x) - (D_V \circ \varphi - \varphi \circ D_\mathfrak{g}) ([x, y]_\mathfrak{g}) \\
& \qquad \qquad \qquad \qquad (\text{by } (\ref{ind-der}) \text{ and since } D_\mathfrak{g} \text{ is a derivation})
\end{align*}
and
\begin{align*}
&F_{(D_V, D_\mathfrak{g})} - F'_{(D_V, D_\mathfrak{g})} \\
&= \big\{ D_V (F(x)) - F (D_\mathfrak{g} (x))  \big\} - \big\{ D_V (F'(x)) - F' (D_\mathfrak{g} (x)) \big\} \\
&= D_V (S \varphi - \varphi N) (x) - (S \varphi - \varphi N) D_\mathfrak{g} (x) \quad (\text{by } (\ref{useful-eqn})) \\
&= S ( (D_V \circ \varphi - \varphi \circ D_\mathfrak{g}) (x) )  - (D_V \circ \varphi - \varphi \circ D_\mathfrak{g}) (N (x)) \qquad (\because ~ \! S D_V = D_V S \text{ and } N D_\mathfrak{g} = D_\mathfrak{g} N).
\end{align*}
This shows that the $2$-cocycles $(\chi_{(D_V, D_\mathfrak{g})}, F_{(D_V, D_\mathfrak{g})})$ and $(\chi'_{(D_V, D_\mathfrak{g})}, F'_{(D_V, D_\mathfrak{g})})$ are cohomologous by the map $D_V \circ \varphi - \varphi \circ D_\mathfrak{g}$. Hence the proof follows.
\end{proof}

It follows from the above proposition that there is a well-defined map
\begin{align*}
\Theta : \mathcal{D}_\rho \rightarrow \mathrm{End} \big( H^2 ((\mathfrak{g}, N) ; (V, S)) \big) ~~~~ \text{ given by }  ~~~~ \Theta (D_V, D_\mathfrak{g}) [(\chi, F)] := [ ( \chi_{(D_V, D_\mathfrak{g})}, F_{ (D_V, D_\mathfrak{g}) }   )],
\end{align*}
for $(D_V, D_\mathfrak{g}) \in \mathcal{D}_\rho$ and $[(\chi, F)] \in H^2 ((\mathfrak{g}, N) ; (V, S))$. Then we have the following.

\begin{prop}
The map $\Theta$ gives a representation of the Lie algebra $\mathcal{D}_\rho$ on the second cohomology group $H^2 ((\mathfrak{g}, N) ; (V, S))$.
\end{prop}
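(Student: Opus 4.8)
The claim is that $\Theta$ is a Lie algebra homomorphism from $\mathcal{D}_\rho$ into $\mathrm{End}\big(H^2((\mathfrak{g}, N); (V, S))\big)$, the target being equipped with the commutator bracket. Proposition \ref{propo-final} already guarantees that $\Theta$ descends to cohomology classes, and the defining formulas for $\chi_{(D_V, D_\mathfrak{g})}$ and $F_{(D_V, D_\mathfrak{g})}$ are manifestly linear in the pair $(D_V, D_\mathfrak{g})$, so $\Theta$ is a linear map. Hence the whole content of the statement reduces to the bracket-compatibility
\[
\Theta\big([(D_V, D_\mathfrak{g}), (D_V', D_\mathfrak{g}')]\big) = \Theta(D_V, D_\mathfrak{g}) \circ \Theta(D_V', D_\mathfrak{g}') - \Theta(D_V', D_\mathfrak{g}') \circ \Theta(D_V, D_\mathfrak{g}),
\]
where the bracket on $\mathcal{D}_\rho$ is the componentwise commutator $[(D_V, D_\mathfrak{g}), (D_V', D_\mathfrak{g}')] = ([D_V, D_V'], [D_\mathfrak{g}, D_\mathfrak{g}'])$.

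The plan is to lift this identity from cohomology to the level of cochains. For $\xi = (D_V, D_\mathfrak{g}) \in \mathcal{D}_\rho$, write $L_\xi$ for the operator sending a pair of cochains $(\chi, F)$ to $(\chi_\xi, F_\xi)$; by construction $\Theta(\xi)$ is exactly the endomorphism of $H^2((\mathfrak{g}, N); (V, S))$ induced by $L_\xi$, so a composite $\Theta(\xi) \circ \Theta(\xi')$ is induced by $L_\xi \circ L_{\xi'}$. It therefore suffices to establish the operator identity
\[
L_{[\xi, \xi']} = L_\xi \circ L_{\xi'} - L_{\xi'} \circ L_\xi
\]
directly on cochains; passing to cohomology classes then yields the displayed bracket relation. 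I would emphasise that this cochain-level identity is pure linear algebra, valid for arbitrary linear maps $D_V, D_\mathfrak{g}$; the conditions defining $\mathcal{D}_\rho$ (being derivations, commuting with $S$ and $N$, and satisfying (\ref{ind-der})) enter only through Proposition \ref{propo-final}, which makes $\Theta(\xi)$ well defined on cohomology, and through the closure of $\mathcal{D}_\rho$ under the componentwise commutator, which makes $\Theta([\xi,\xi'])$ meaningful.

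I would verify the operator identity componentwise. On the $F$-part, expanding $(L_\xi L_{\xi'} F)(x) = D_V\big(D_V' F(x) - F(D_\mathfrak{g}' x)\big) - \big(D_V' F(D_\mathfrak{g} x) - F(D_\mathfrak{g}' D_\mathfrak{g} x)\big)$ and subtracting the same expression with $\xi$ and $\xi'$ interchanged, the two mixed terms cancel and one is left with $[D_V, D_V'] F(x) - F([D_\mathfrak{g}, D_\mathfrak{g}'] x) = (L_{[\xi, \xi']} F)(x)$. On the $\chi$-part the computation is of the same flavour but longer: expanding $(L_\xi L_{\xi'} \chi)(x, y)$ yields nine terms, and after subtracting $(L_{\xi'} L_\xi \chi)(x, y)$ the four terms in which one of $D_V, D_V'$ is applied together with one of $D_\mathfrak{g}, D_\mathfrak{g}'$ cancel against their counterparts, as do the two terms $\chi(D_\mathfrak{g} x, D_\mathfrak{g}' y)$ and $\chi(D_\mathfrak{g}' x, D_\mathfrak{g} y)$; the survivors assemble into $[D_V, D_V']\chi(x, y) - \chi([D_\mathfrak{g}, D_\mathfrak{g}'] x, y) - \chi(x, [D_\mathfrak{g}, D_\mathfrak{g}'] y) = (L_{[\xi, \xi']}\chi)(x, y)$.

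The only point requiring care, and the main obstacle, is the bookkeeping in this last cancellation: one must track the eighteen terms and confirm that exactly the second-order contributions $\chi(D_\mathfrak{g}' D_\mathfrak{g} x, y)$ and $\chi(D_\mathfrak{g} D_\mathfrak{g}' x, y)$ (and the analogous pair in the second slot) reorganise into the single commutator $[D_\mathfrak{g}, D_\mathfrak{g}']$ acting in one argument. With the cochain-level identity in hand, the Lie algebra homomorphism property of $\Theta$ follows at once upon passing to cohomology, completing the proof that $\Theta$ is a representation of $\mathcal{D}_\rho$ on $H^2 ((\mathfrak{g}, N) ; (V, S))$.
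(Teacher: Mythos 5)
Your proposal is correct and follows essentially the same route as the paper: the paper's proof also verifies the commutator identity by expanding $\chi_{(D_V,D_\mathfrak{g})} = D_V\circ\chi - \chi\circ(D_\mathfrak{g}\otimes\mathrm{Id}_\mathfrak{g}) - \chi\circ(\mathrm{Id}_\mathfrak{g}\otimes D_\mathfrak{g})$ and $F_{(D_V,D_\mathfrak{g})} = D_V\circ F - F\circ D_\mathfrak{g}$ at the cochain level, cancelling the mixed terms, and reassembling the second-order terms into the componentwise commutator before passing to cohomology via Proposition 5.4. Your bookkeeping of the nine (resp.\ three) terms in each composite and the resulting cancellations is accurate, so the argument is complete.
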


\begin{proof}
Let $(D_V, D_\mathfrak{g}), (D'_V, D'_\mathfrak{g}) \in \mathcal{D}_\rho$ and $[(\chi, F)] \in H^2 ((\mathfrak{g}, N); (V, S))$. Then we have
\begin{align*}
&\big(  \Theta (D_V, D_\mathfrak{g}) \circ \Theta (D'_V, D'_\mathfrak{g}) -  \Theta (D'_V, D'_\mathfrak{g}) \circ \Theta (D_V, D_\mathfrak{g})  \big) [(\chi, F)] \\
&= \Theta (D_V, D_\mathfrak{g}) [ ( D'_V \circ \chi - \chi \circ (D'_\mathfrak{g} \otimes \mathrm{Id}_\mathfrak{g}) - \chi \circ (\mathrm{Id}_\mathfrak{g} \otimes D'_\mathfrak{g}) ~ \! , ~ \! D'_V \circ F - F \circ D'_\mathfrak{g}  )] \\
& \qquad \quad - \Theta (D'_V, D'_\mathfrak{g}) [ ( D_V \circ \chi - \chi \circ (D_\mathfrak{g} \otimes \mathrm{Id}_\mathfrak{g}) - \chi \circ (\mathrm{Id}_\mathfrak{g} \otimes D_\mathfrak{g}) ~ \! , ~ \! D_V \circ F - F \circ D_\mathfrak{g}  )] \\
&= \Big[ \Big( D_V \circ ( D'_V \circ \chi - \chi \circ (D'_\mathfrak{g} \otimes \mathrm{Id}_\mathfrak{g}) - \chi \circ (\mathrm{Id}_\mathfrak{g} \otimes D'_\mathfrak{g})  )  \\
& \qquad - ( D'_V \circ \chi - \chi \circ (D'_\mathfrak{g} \otimes \mathrm{Id}_\mathfrak{g}) - \chi \circ (\mathrm{Id}_\mathfrak{g} \otimes D'_\mathfrak{g})) \circ (D_\mathfrak{g} \otimes \mathrm{Id}_\mathfrak{g})  \\
& \qquad - ( D'_V \circ \chi - \chi \circ (D'_\mathfrak{g} \otimes \mathrm{Id}_\mathfrak{g}) - \chi \circ (\mathrm{Id}_\mathfrak{g} \otimes D'_\mathfrak{g})) \circ (\mathrm{Id}_\mathfrak{g} \otimes D_\mathfrak{g}) ~ \!, \\
& \qquad \qquad D_V \circ (D'_V \circ F - F \circ D'_\mathfrak{g}) - (D'_V \circ F - F \circ D'_\mathfrak{g}) \circ D_\mathfrak{g} \Big) \Big] \\
&- \Big[ \Big( D'_V \circ ( D_V \circ \chi - \chi \circ (D_\mathfrak{g} \otimes \mathrm{Id}_\mathfrak{g}) - \chi \circ (\mathrm{Id}_\mathfrak{g} \otimes D_\mathfrak{g})  )  \\
& \qquad - ( D_V \circ \chi - \chi \circ (D_\mathfrak{g} \otimes \mathrm{Id}_\mathfrak{g}) - \chi \circ (\mathrm{Id}_\mathfrak{g} \otimes D_\mathfrak{g})) \circ (D'_\mathfrak{g} \otimes \mathrm{Id}_\mathfrak{g})  \\
& \qquad - ( D_V \circ \chi - \chi \circ (D_\mathfrak{g} \otimes \mathrm{Id}_\mathfrak{g}) - \chi \circ (\mathrm{Id}_\mathfrak{g} \otimes D_\mathfrak{g})) \circ (\mathrm{Id}_\mathfrak{g} \otimes D'_\mathfrak{g}) ~ \!, \\
& \qquad \qquad D'_V \circ (D_V \circ F - F \circ D_\mathfrak{g}) - (D_V \circ F - F \circ D_\mathfrak{g}) \circ D'_\mathfrak{g} \Big) \Big] \\
& = \Big[ ( D_V \circ D'_V - D'_V \circ D_V ) \circ \chi - \chi \circ (  (D_\mathfrak{g} \circ D'_\mathfrak{g} - D'_\mathfrak{g} \circ D_\mathfrak{g}) \otimes \mathrm{Id}_\mathfrak{g}) - \chi \circ ( \mathrm{Id}_\mathfrak{g} \otimes (D_\mathfrak{g} \circ D'_\mathfrak{g} - D'_\mathfrak{g} \circ D_\mathfrak{g})) ~ \! ,\\
& \qquad \qquad \qquad \qquad (D_V \circ D'_V - D'_V \circ D_V ) \circ F - F \circ (D_\mathfrak{g} \circ D'_\mathfrak{g} - D'_\mathfrak{g} \circ D_\mathfrak{g})     \Big] \\
& = \Theta \big( D_V \circ D'_V - D'_V \circ D_V ~ \!, ~ \! D_\mathfrak{g} \circ D'_\mathfrak{g} - D'_\mathfrak{g} \circ D_\mathfrak{g} \big) [(\chi, F)] \\
& = \Theta \big(  [ (D_V, D_\mathfrak{g}), (D'_V, D'_\mathfrak{g})] \big) [(\chi, F)].
\end{align*}
This proves the desired result.
\end{proof}

For a fixed section $s$, let the given abelian extension (\ref{abel-last}) produced the $2$-cocycle $(\chi, F).$ Then we define a map $\mathcal{W} : \mathcal{D}_\rho \rightarrow H^2 ((\mathfrak{g}, N) ; (V, S))$ by
\begin{align*}
\mathcal{W} (D_V, D_\mathfrak{g}) := \Theta (D_V, D_\mathfrak{g}) [(\chi, F)] = [ ( \chi_{(D_V, D_\mathfrak{g})}, F_{ (D_V, D_\mathfrak{g}) }   )],
\end{align*}
for $(D_V, D_\mathfrak{g}) \in \mathcal{D}_\rho$. This is called the {\bf Wells map} in the context of Nijenhuis Lie algebra derivations.
Then it follows from Proposition \ref{propo-final} (ii) that the Wells map is independent of the choice of the section $s$. 
%The map $\mathcal{W}$ constructed above is called the {\bf Wells map}.
We are now ready to prove the main result of this section.

\begin{thm}\label{thm-ind-der} Let (\ref{abel-last}) be an abelian extension of the Nijenhuis Lie algebra $(\mathfrak{g}, [~,~]_\mathfrak{g}, N)$ by the Nijenhuis representation $(V, \rho, S)$.
Then a pair $(D_V, D_\mathfrak{g}) \in \mathrm{Der} (V, S) \times \mathrm{Der} (\mathfrak{g}, N)$ of Nijenhuis Lie algebra derivations is inducible if and only if $(D_V, D_\mathfrak{g}) \in \mathcal{D}_\rho$ and $ \mathcal{W} (D_V, D_\mathfrak{g}) = 0.$
\end{thm}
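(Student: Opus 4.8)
The plan is to follow the template of the automorphism case (Proposition~\ref{first-prop} together with Theorem~\ref{thm-ind-aut}), linearized to the derivation setting and specialized to the abelian extension. The governing idea is that a single linear map $\lambda : \mathfrak{g} \rightarrow V$ (in the role of $Ds - sD_\mathfrak{g}$) simultaneously encodes inducibility and the coboundary of $(\chi_{(D_V, D_\mathfrak{g})}, F_{(D_V, D_\mathfrak{g})})$, so that the entire statement rests on matching these two uses of $\lambda$.

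For the forward implication, assume $(D_V, D_\mathfrak{g})$ is induced by some $D \in \mathrm{Der}_V(\mathfrak{e}, U)$. Membership $(D_V, D_\mathfrak{g}) \in \mathcal{D}_\rho$ is then exactly Proposition~\ref{prop-ind-der}. To obtain $\mathcal{W}(D_V, D_\mathfrak{g}) = 0$ I would set $\lambda(x) := (Ds - sD_\mathfrak{g})(x)$, which lands in $\mathrm{ker}(p) = V$ since $pDs = \overline{D} = D_\mathfrak{g}$. Expanding $D_V(\chi(x,y)) = D([s(x), s(y)]_\mathfrak{e} - s[x,y]_\mathfrak{g})$ via the derivation property of $D$ and substituting $Ds = sD_\mathfrak{g} + \lambda$, the $s(\mathfrak{g})$-terms cancel because $D_\mathfrak{g}$ is a derivation, and one is left with
\begin{align*}
\chi_{(D_V, D_\mathfrak{g})}(x, y) = \rho_x \lambda(y) - \rho_y \lambda(x) - \lambda([x, y]_\mathfrak{g}).
\end{align*}
Likewise, expanding $D_V(F(x)) = D((Us - sN)(x))$ and using $DU = UD$, $U\big|_V = S$ and $N D_\mathfrak{g} = D_\mathfrak{g} N$ gives $F_{(D_V, D_\mathfrak{g})}(x) = S(\lambda(x)) - \lambda(N(x))$. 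These are precisely the conditions making $(\chi_{(D_V, D_\mathfrak{g})}, F_{(D_V, D_\mathfrak{g})})$ cohomologous to the trivial $2$-cocycle via $\lambda$, so its class $\mathcal{W}(D_V, D_\mathfrak{g})$ vanishes.

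For the converse, assume $(D_V, D_\mathfrak{g}) \in \mathcal{D}_\rho$ and $\mathcal{W}(D_V, D_\mathfrak{g}) = 0$. The vanishing of the class yields a linear map $\lambda : \mathfrak{g} \rightarrow V$ satisfying the same two coboundary identities displayed above. Using the section to identify $\mathfrak{e} \cong s(\mathfrak{g}) \oplus V$, I would define $D : \mathfrak{e} \rightarrow \mathfrak{e}$ by $D(s(x) + v) := sD_\mathfrak{g}(x) + \lambda(x) + D_V(v)$. From this formula it is immediate that $D(V) \subset V$, that $D\big|_V = D_V$, and that $\overline{D} = p D s = D_\mathfrak{g}$; the real content is to verify that $D$ is a Nijenhuis Lie algebra derivation.

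The hard part will be these two checks, but both collapse onto the hypotheses once the $s(\mathfrak{g})$- and $V$-components are separated. Writing the transported bracket as $[s(x)+u, s(y)+w]_\mathfrak{e} = s[x,y]_\mathfrak{g} + \chi(x,y) + \rho_x w - \rho_y u$, the Leibniz identity $D[e_1, e_2]_\mathfrak{e} = [De_1, e_2]_\mathfrak{e} + [e_1, De_2]_\mathfrak{e}$ holds on the $s(\mathfrak{g})$-component because $D_\mathfrak{g}$ is a derivation, while on the $V$-component the terms $D_V(\rho_x w)$ and $D_V(\rho_y u)$ are rewritten by the defining relation of $\mathcal{D}_\rho$; after cancellation the remaining identity is exactly the first coboundary relation for $\lambda$. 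For the Nijenhuis compatibility $DU = UD$ I would use $Us = sN + F$ to evaluate both composites on $s(x) + v$: the $s(\mathfrak{g})$-components agree by $N D_\mathfrak{g} = D_\mathfrak{g} N$, the contributions $D_V S v$ and $S D_V v$ cancel by $S D_V = D_V S$, and what survives is precisely the second coboundary relation $F_{(D_V, D_\mathfrak{g})}(x) = S\lambda(x) - \lambda(N(x))$. Hence $D \in \mathrm{Der}_V(\mathfrak{e}, U)$ induces $(D_V, D_\mathfrak{g})$, and the only delicate point throughout is the component bookkeeping in these two verifications.
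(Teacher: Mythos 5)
Your proposal is correct and follows essentially the same route as the paper: the forward direction uses Proposition \ref{prop-ind-der} for membership in $\mathcal{D}_\rho$ and exhibits $\lambda = Ds - sD_\mathfrak{g}$ as the cochain trivializing $(\chi_{(D_V,D_\mathfrak{g})}, F_{(D_V,D_\mathfrak{g})})$, while the converse builds $D(s(x)+v) = sD_\mathfrak{g}(x) + D_V(v) + \lambda(x)$ and verifies the Leibniz rule via the $\mathcal{D}_\rho$ relation and the first coboundary identity, and $DU = UD$ via $F = Us - sN$ and the second. This matches the paper's argument step for step.
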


\begin{proof}
Let $(D_V, D_\mathfrak{g}) \in \mathrm{Der} (V, S) \times \mathrm{Der} (\mathfrak{g}, N)$ be an inducible pair of Nijenhuis Lie algebra derivations. Then there is a Nijenhuis Lie algebra derivation $D \in \mathrm{Der}_V (\mathfrak{e}, U)$ such that $D \big|_V = D_V$ and $pDs = D_\mathfrak{g}$, where $s$ is some section. Using this, we have seen in Proposition \ref{prop-ind-der} that $(D_V, D_\mathfrak{g}) \in \mathcal{D}_\rho$. To the section $s$, let the given abelian extension produce the $2$-cocycle $(\chi, F)$. Then we have
\begin{align*}
&\chi_{ (D_V, D_\mathfrak{g})} (x, y) \\
&= D_V (\chi (x, y)) - \chi (D_\mathfrak{g} (x), y) -\chi (x, D_\mathfrak{g} (y) ) \\
&=D_V \big(  [s(x), s(y)]_\mathfrak{e} - s [x, y]_\mathfrak{g} \big) - [s D_\mathfrak{g} (x), s(y)]_\mathfrak{e} + s [D_\mathfrak{g} (x), y]_\mathfrak{g} - [s(x), s D_\mathfrak{g} (y)]_\mathfrak{e} + s [x, D_\mathfrak{g} (y)]_\mathfrak{g} \\
&= [s(x) , Ds (y)]_\mathfrak{e} + [ Ds (x), s(y)]_\mathfrak{e} - Ds [x, y]_\mathfrak{g} - [s D_\mathfrak{g} (x), s(y)]_\mathfrak{e}
- [s(x), s D_\mathfrak{g} (y)]_\mathfrak{e} + s D_\mathfrak{g} [x, y]_\mathfrak{g} \\ & \qquad \qquad \qquad \qquad \qquad \qquad \qquad \qquad \qquad (\text{as } D_V = D \big|_V \text{ and } D, D_\mathfrak{g} \text{ are derivations}) \\
&= \rho_x (Ds - s D_\mathfrak{g}) (y)- \rho_y (Ds - s D_\mathfrak{g}) (x) - (Ds - s D_\mathfrak{g}) ([x, y]_\mathfrak{g}) \quad (\text{by rearrangements})
\end{align*}
and
\begin{align*}
F_{ (D_V, D_\mathfrak{g})}  (x) =~& D_V (F(x)) - F (D_\mathfrak{g} (x)) \\
=~& D_V ( (Us - sN) (x)) - (Us -sN) (D_\mathfrak{g} (x))\\
=~& (DUs - Us D_\mathfrak{g}) (x) - (Ds N - sN D_\mathfrak{g})(x) \quad (\because ~ \! D_V = D \big|_V)\\
=~& S ((Ds - s D_\mathfrak{g}) (x)) - (Ds - s D_\mathfrak{g}) (N (x)) \quad (\because ~ \! UD = DU,~  U \big|_V = S \text{ and } ND_\mathfrak{g} = D_\mathfrak{g} N),
\end{align*}
for $x, y \in \mathfrak{g}$. This shows that the $2$-cocycle $(\chi_{ (D_V, D_\mathfrak{g})}, F_{ (D_V, D_\mathfrak{g})} )$ is cohomologous to the null $2$-cocycle $(0, 0)$ by the map $Ds - s D_\mathfrak{g} : \mathfrak{g} \rightarrow V$. Hence $\mathcal{W} (D_V, D_\mathfrak{g}) = [ (\chi_{ (D_V, D_\mathfrak{g})}, F_{ (D_V, D_\mathfrak{g})})] = [(0,0)] = 0$.

\medskip

Conversely, assume that $(D_V, D_\mathfrak{g}) \in \mathcal{D}_\rho$ and $\mathcal{W} (D_V, D_\mathfrak{g})  = 0$. Choose any section $s$ and let the abelian extension produce the $2$-cocycle $(\chi, F)$. Then it follows that the $2$-cocycle $(\chi_{ (D_V, D_\mathfrak{g})}, F_{ (D_V, D_\mathfrak{g})})$ is cohomologous to the null $2$-cocycle. Hence there exists a linear map $\varphi : \mathfrak{g} \rightarrow V$ such that
%\begin{align*}
%chi_{(D_V, D_\mathfrak{g})} (x, y) = \rho_x \varphi(y)- \rho_y \varphi (x) - \varphi ([x, y]_\mathfrak{g}) \quad \text{ and } \quad F_{(D_V, D_\mathfrak{g})} (x) = S (\varphi (x)) - \varphi (N(x)),
%\end{align*}
\begin{align}
D_V (\chi (x, y)) - \chi (D_\mathfrak{g} (x), y) - \chi (x, D_\mathfrak{g} (y))  =~& \rho_x \varphi (y) - \rho_y \varphi (x) - \varphi ([x, y]_\mathfrak{g}) \label{first-use}\\
\text{ and } ~~ D_V (F(x)) - F (D_\mathfrak{g} (x)) =~& S (\varphi (x)) - \varphi (N(x)), \label{sec-use}
\end{align}
for $x, y \in \mathfrak{g}$. Using the fact that any element $e \in \mathfrak{e}$ can be uniquely written as $e = s(x) + u$ (for some $x \in \mathfrak{g}$ and $u \in V$), we now define a linear map $D : \mathfrak{e} \rightarrow \mathfrak{e}$ by
\begin{align*}
D(e) = D (s (x) + u) = s (D_\mathfrak{g} (x)) + D_V (u) + \varphi (x), \text{ for } e = s(x) + u \in \mathfrak{e}.
\end{align*}
Then for any $e = s(x) + u$ and $e' = s(y) + v$ from the space $\mathfrak{e}$, we have
\begin{align*}
D ([e, e']_\mathfrak{e}) =~& D ( [s(x) + u , s(y) + v]_\mathfrak{e})\\
=~& D ( [s(x) , s(y)]_\mathfrak{e} + [s(x), v]_\mathfrak{e} + [u, s(y)]_\mathfrak{e})\\
=~& D \big(  s [x, y]_\mathfrak{g} + \chi (x, y) + \rho_x v - \rho_y u  \big)\\
=~& s (D_\mathfrak{g} [x, y]_\mathfrak{g}) + D_V (\chi (x, y) + \rho_x v - \rho_y u) + \varphi ([x, y]_\mathfrak{g}) \qquad (\text{using definition of } D)\\
=~& s ( [D_\mathfrak{g} (x), y]_\mathfrak{g} + [x, D_\mathfrak{g} (y)]_\mathfrak{g}) + \chi (D_\mathfrak{g} (x), y ) + \chi (x, D_\mathfrak{g} (y)) + \rho_x \varphi (y) - \rho_y \varphi (x) \\
& \qquad + \rho_{D_\mathfrak{g} (x)} v + \rho_x D_V (v) - \rho_{D_\mathfrak{g} (y)} u - \rho_y D_V (u) \qquad (\text{by }  (\ref{ind-der}) \text{ and } (\ref{first-use}))\\
=~& [s (D_\mathfrak{g} (x)), s(y)]_\mathfrak{e} + [s(x), s (D_\mathfrak{g} (y)]_\mathfrak{e} + [s(x), \varphi (y)]_\mathfrak{e} + [\varphi (x), s(y)]_\mathfrak{e}  \\
& \qquad + [s (D_\mathfrak{g} (x)), v]_\mathfrak{e} + [s(x), D_V (v)]_\mathfrak{e} + [u, s(D_\mathfrak{g} (y))]_\mathfrak{e} + [D_V (u) , s(y)]_\mathfrak{e} \\
=~&  [ s (D_\mathfrak{g} (x)) + D_V (u) + \varphi (x) ~ \! , ~ \! s(y)+ v]_\mathfrak{e} + [ s(x) + u ~ \! , ~ \!   s (D_\mathfrak{g} (y)) + D_V (v) + \varphi (y) ]_\mathfrak{e}  \\
=~& [D(e), e']_\mathfrak{e} + [e, D (e')]_\mathfrak{e}.
\end{align*}
Moreover, similar to (\ref{compos}), one can show that $U \circ D = D \circ U$. Hence $D \in \mathrm{Der} (\mathfrak{e}, U)$ is a Nijenhuis Lie algebra derivation. Further, from the definition of $D$, we get that $D (V) \subset V$ which implies that $D \in \mathrm{Der}_V (\mathfrak{e}, U)$. Additionally, it is easy to see that $D \big|_V = D_V$ and $\overline{D} = pDs = D_\mathfrak{g}$. Hence $\eta (D) = (D \big|_V, \overline{D}) = (D_V, D_\mathfrak{g})$ which shows that the pair $(D_V, D_\mathfrak{g})$ is inducible.
\end{proof}

In the following, we construct the Wells exact sequence for Nijenhuis Lie algebra derivations in a given abelian extension.

\begin{thm}\label{thm-wells-ses-der}
Let (\ref{abel-last}) be an abelian extension of the Nijenhuis Lie algebra $(\mathfrak{g}, [~,~]_\mathfrak{g}, N)$ by the Nijenhuis representation $(V,\rho, S)$. Then there is a short exact sequence
\begin{align}\label{wells-ses-der-abel}
0 \rightarrow \mathrm{Der} ((\mathfrak{g}, N); (V, S)) \xrightarrow{\iota} \mathrm{Der}_V (\mathfrak{e}, U) \xrightarrow{\eta} \mathcal{D}_\rho \xrightarrow{\mathcal{W}} H^2 ((\mathfrak{g}, N); (V, S)),
\end{align}
where the map $\iota$ is given by $\iota (d) (e) = d (p(e))$, for $d \in \mathrm{Der} ((\mathfrak{g}, N); (V, S))$ and $e \in \mathfrak{e}$.
\end{thm}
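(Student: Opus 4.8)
The plan is to establish exactness at each of the three nontrivial positions of \eqref{wells-ses-der-abel} separately, following the template already used for the automorphism version in Theorem \ref{thm-wells-ses-aut}, but with the new ingredient that exactness at $\mathcal{D}_\rho$ will be handed to us almost for free by the inducibility criterion of Theorem \ref{thm-ind-der}. Before treating exactness I would first confirm that $\iota$ really lands in $\mathrm{Der}_V (\mathfrak{e}, U)$: writing $\iota (d) = i \circ d \circ p$ one has $\iota (d)(V) = d(p(V)) = 0 \subseteq V$, and the derivation identity follows from $\iota (d)([e, e']_\mathfrak{e}) = d([p(e), p(e')]_\mathfrak{g}) = \rho_{p(e)} d(p(e')) - \rho_{p(e')} d(p(e))$, which matches $[\iota (d)(e), e']_\mathfrak{e} + [e, \iota (d)(e')]_\mathfrak{e}$ once one recalls that $[v, e']_\mathfrak{e} = - \rho_{p(e')} v$ for $v \in V$ in an abelian extension. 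Commutation $U \circ \iota (d) = \iota (d) \circ U$ uses $pU = Np$, $U|_V = S$ and $S \circ d = d \circ N$. Injectivity of $\iota$ is then immediate: $\iota (d) = 0$ forces $d \circ p = 0$, hence $d = 0$ since $p$ is surjective, which gives exactness at $\mathrm{Der} ((\mathfrak{g}, N); (V, S))$.

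For exactness at $\mathrm{Der}_V (\mathfrak{e}, U)$ I would first check $\mathrm{im}\, \iota \subseteq \ker \eta$: clearly $\iota (d)|_V = 0$, and $\overline{\iota (d)}(x) = p(d(x)) = 0$ because $d(x) \in V$, so $\eta \circ \iota = 0$. For the reverse inclusion, take $D \in \ker \eta$, so that $D|_V = 0$ and $pDs = 0$, fix a section $s$, and define $d : \mathfrak{g} \to V$ by $d(x) := D(s(x))$; this lands in $V = \ker p$ precisely because $pDs = 0$. A one-line computation using $D|_V = 0$ shows $\iota (d) = D$ on a general element $s(x) + v$. The substantive part is to verify $d \in \mathrm{Der} ((\mathfrak{g}, N); (V, S))$: applying $D$ to $s([x, y]_\mathfrak{g}) = [s(x), s(y)]_\mathfrak{e} - \chi (x, y)$ and discarding $D(\chi (x, y)) = 0$ yields the derivation identity $d([x, y]_\mathfrak{g}) = \rho_x d(y) - \rho_y d(x)$, while applying $D$ to $U(s(x)) = s(N(x)) + F(x)$ together with $UD = DU$, $U|_V = S$ and $D(F(x)) = 0$ yields $S \circ d = d \circ N$.

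Exactness at $\mathcal{D}_\rho$ is where the earlier theorems carry the load. Proposition \ref{prop-ind-der} guarantees that every inducible pair lies in $\mathcal{D}_\rho$, so $\mathrm{im}\, \eta \subseteq \mathcal{D}_\rho$ and the map $\eta : \mathrm{Der}_V (\mathfrak{e}, U) \to \mathcal{D}_\rho$ is well defined. Then, for any $(D_V, D_\mathfrak{g}) \in \mathcal{D}_\rho$, Theorem \ref{thm-ind-der} states exactly that $(D_V, D_\mathfrak{g})$ is inducible --- i.e.\ lies in $\mathrm{im}\, \eta$ --- if and only if $\mathcal{W}(D_V, D_\mathfrak{g}) = 0$. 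Since membership in $\mathcal{D}_\rho$ is already assumed, this is precisely the statement $\ker \mathcal{W} = \mathrm{im}\, \eta$, completing exactness at $\mathcal{D}_\rho$.

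I expect the only genuinely computational step to be the verification in the second paragraph that the locally defined map $d(x) = D(s(x))$ is a bona fide Nijenhuis Lie algebra derivation with values in the representation; everything else is either formal (injectivity, the easy inclusions $\eta \circ \iota = 0$) or is supplied directly by Theorem \ref{thm-ind-der}. Thus the conceptually delicate exactness at $\mathcal{D}_\rho$ --- the one involving the Wells obstruction --- presents no new difficulty here, having been absorbed into the inducibility theorem proved above.
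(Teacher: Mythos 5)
Your proposal is correct and follows essentially the same route as the paper: injectivity of $\iota$ gives exactness at the first spot, the map $d(x):=D(s(x))$ identifies $\ker\eta$ with $\mathrm{im}\,\iota$ at the second, and exactness at $\mathcal{D}_\rho$ is read off directly from Theorem \ref{thm-ind-der}. The only difference is that you spell out the verifications (that $\iota(d)$ is a Nijenhuis derivation and that $d$ lies in $\mathrm{Der}((\mathfrak{g},N);(V,S))$) which the paper leaves as "easily checked", and those computations are correct.
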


\begin{proof}
It is easy to see that the map $\iota$ is injective. Hence the above sequence is exact in the first place. To show that it is exact in the second place, we take an element $D \in \mathrm{ker} (\eta)$. That is, $\eta (D) = (D \big|_V, \overline{D}) = (0,0)$ which means that $D \big|_V = 0$ and $\overline{D} = p Ds = 0$, where $s$ is any arbitrary but fixed section. For any $x \in \mathfrak{g}$, since $(pDs)(x) = 0$, it follows that $D (s(x)) \in V$. We define a map $d : \mathfrak{g} \rightarrow V$ by $d(x) := D (s(x)),$ for $x \in \mathfrak{g}$. Using the fact that $D \in \mathrm{Der}_V (\mathfrak{e}, U)$, it can be easily checked that $d \in \mathrm{Der} ((\mathfrak{g}, N); (V, S))$. Moreover, for any $e = s (x) + u \in \mathfrak{e}$, we have
\begin{align*}
D (e) = D (s (x) + u) = D (s (x)) = d(x) = \iota (d) (s (x) + u) = \iota (d) (e).
\end{align*}
This shows that $D \in \mathrm{im} (\iota)$ and hence $\mathrm{ker} (\eta) \subset \mathrm{im} (\iota)$. Conversely, for any $d \in \mathrm{Der} ((\mathfrak{g}, N); (V, S))$, we have $\eta ( \iota (d)) = \big( \iota (d) \big|_V ~ \! , ~ \! p (\iota (d)) s \big) = (0, 0)$ from the definition of $\iota (d)$. Hence $\mathrm{im} (\iota) \subset \mathrm{ker} (\eta)$ which in turn implies that $\mathrm{ker} (\eta) = \mathrm{im} (\iota)$. Thus, the sequence is exact in the second place. Finally, we take an element $(D_V, D_\mathfrak{g}) \in \mathrm{ker} (\mathcal{W}).$ Then by Theorem \ref{thm-ind-der}, the pair $(D_V, D_\mathfrak{g})$ is inducible. Hence it lies in the image of $\eta$. On the other hand, if $(D_V, D_\mathfrak{g}) \in \mathrm{im} (\eta)$ then it is inducible and hence $\mathcal{W} (D_V, D_\mathfrak{g}) = 0$. Therefore, $\mathrm{ker} (\mathcal{W}) = \mathrm{im} (\iota)$ which shows that the sequence is also exact in the third place. 
\end{proof}

When (\ref{abel-last}) is a split abelian extension of the Nijenhuis Lie algebra $(\mathfrak{g}, [~,~]_\mathfrak{g}, N)$ by the Nijenhuis representation $(V, \rho, S)$, the Wells map $\mathcal{W}: \mathcal{D}_\rho \rightarrow H^2 ((\mathfrak{g}, N); (V, S))$ vanishes identically. Hence the Wells exact sequence (\ref{wells-ses-der-abel}) becomes
\begin{align}\label{takes-the-form-der}
0 \rightarrow \mathrm{Der} ((\mathfrak{g}, N); (V, S)) \xrightarrow{\iota} \mathrm{Der}_V (\mathfrak{e}, U) \xrightarrow{\eta} \mathcal{D}_\rho \rightarrow 0.
\end{align}
For any $(D_V, D_\mathfrak{g}) \in \mathcal{D}_\rho$, we define a map $D_{(D_V, D_\mathfrak{g})} : \mathfrak{e} \rightarrow \mathfrak{e}$ by $D_{(D_V, D_\mathfrak{g})} (s(x) + u) := s (D_\mathfrak{g} (x)) + D_V (u)$, for any $s(x) + u \in \mathfrak{e}$. It is easy to verify that $D_{(D_V, D_\mathfrak{g})} \in \mathrm{Der}_V (\mathfrak{e}, U)$. Hence there is a map $t : \mathcal{D}_\rho \rightarrow \mathrm{Der}_V (\mathfrak{e}, U)$ given by $t (D_V, D_\mathfrak{g}) := D_{(D_V, D_\mathfrak{g})}$, for $(D_V, D_\mathfrak{g}) \in \mathcal{D}_\rho$. Further, one can show that $t$ is a Lie algebra homomorphism and also $\eta t = \mathrm{Id}_{\mathcal{D}_\rho}$. As a result, (\ref{takes-the-form-der}) is a split exact sequence of Lie algebras. Hence, in this case, we obtain the following isomorphism as Lie algebras
\begin{align*}
\mathrm{Der}_V (\mathfrak{e}, U) ~ \! \cong ~ \! \mathcal{D}_\rho \ltimes \mathrm{Der} ((\mathfrak{g}, N); (V, S)).
\end{align*}

\medskip

\noindent {\bf Future works.} In this paper, we develop the non-abelian cohomology theory of Nijenhuis Lie algebras and find applications to the inducibility problems. In particular, we define the second cohomology group of a Nijenhuis Lie algebra with coefficients in a Nijenhuis representation. We hope that Nijenhuis Lie algebra derivations will play a crucial role in defining the first cohomology group. However, the full cochain complex of a Nijenhuis Lie algebra is yet to be discovered. In subsequent work, we will discover the full cochain complex of a Nijenhuis Lie algebra with coefficients in a Nijenhuis representation. We also look forward to finding applications in deformation theory and homotopy algebras.

\medskip

Given a type of algebraic structure, it is often useful to construct a (differential) graded Lie algebra or more generally an $L_\infty$-algebra whose Maurer-Cartan elements correspond to a prescribed type of structure. In \cite{fre} Fr\'{e}gier constructed a differential graded Lie algebra whose Maurer-Cartan elements are precisely non-abelian extensions of a Lie algebra $\mathfrak{g}$ by another Lie algebra $\mathfrak{h}$. The construction of this differential graded Lie algebra is based on the Nijenhuis-Richardson graded Lie bracket that characterizes Lie algebra structures on a given vector space. The result of \cite{fre} was also generalized to the context of associative algebras and Leibniz algebras \cite{gouray}, \cite{liu-sheng-wang}. In future, we aim to construct an $L_\infty$-algebra whose Maurer-Cartan elements are precisely Nijenhuis Lie algebra structures on a given vector space. This characterization could be useful in developing the cohomology theory of Nijenhuis Lie algebras as well as in obtaining the Maurer-Cartan characterization of non-abelian extensions of Nijenhuis Lie algebras.

\medskip

It has been observed in \cite{das-twisted} that Nijenhuis operators on Lie algebras are closely related to twisted Rota-Baxter operators and NS-Lie algebras (a generalization of pre-Lie algebras). A class of twisted Rota-Baxter operators is given by Reynolds operators on Lie algebras. An interesting study could be to understand the non-abelian extension theory of Reynolds Lie algebras and NS-Lie algebras with possible connections to the non-abelian extensions of Rota-Baxter Lie algebras and Nijenhuis Lie algebras.

\medskip

\noindent {\bf Acknowledgements.} The author thanks the Department of Mathematics, IIT Kharagpur for providing the beautiful academic atmosphere where the research has been carried out.

\medskip

\noindent {\bf Data Availability Statement.} Data sharing does not apply to this article as no new data were created or analyzed in this study.

\end{document}